\newtheorem{theorem}{Theorem}
\newtheorem{lemma}[theorem]{Lemma}
\newtheorem{claim}[theorem]{Claim}
\newtheorem{corollary}[theorem]{Corollary}
\newtheorem{conjecture}{Conjecture}
\theoremstyle{definition}
\newtheorem{definition}{Definition}
\theoremstyle{remark}
\newtheorem{remark}{Remark}
\newtheorem{example}{Example}
\newcommand{\mythmname}{}
\newtheoremstyle{mytheorem}
	{3pt}
	{3pt}
	{\it}
	{}
	{}
	{{\bf .}}
	{.5em}
	{\mythmname{\ifthenelse{ \equal{#3}{} }{}{\ (\thmnote{#3})}}}
\theoremstyle{mytheorem}
\newtheorem{namedtheorem}{Name}
\newcommand{\renametheorem}[1]
{
	\renewcommand{\mythmname}{{\bf #1}}
}
\newcommand{\tensor}{\otimes}
\newcommand{\GA}{{\rm GA}}
\newcommand{\EA}{{\rm EA}}
\newcommand{\TA}{{\rm TA}}
\newcommand{\IA}{{\rm IA}}
\newcommand{\GL}{{\rm GL}}
\newcommand{\D}{{\rm D}}
\renewcommand{\P}{{\rm P}}
\newcommand{\GP}{{\rm GP}}
\newcommand{\pprime}{{\prime \prime}}
\newcommand{\Venereau}{V\'en\'ereau }
\newcommand{\Vtype}{V\'en\'ereau-type }
\DeclareMathOperator{\Aut}{Aut}
\DeclareMathOperator{\Spec}{Spec}
\DeclareMathOperator{\id}{id}
\newcommand{\IC}{\mathbb{C}}
\newcommand{\IN}{\mathbb{N}}
\newcommand{\IZ}{\mathbb{Z}}
\begin{document}

\title{Strongly residual coordinates over $A[x]$}

\author{Drew Lewis \\ University of Alabama \\ amlewis@as.ua.edu}

\maketitle

\begin{abstract}
For a domain $A$ of characteristic zero, a polynomial $f \in A[x]^{[n]}$ is called a {\em strongly residual coordinate} if $f$ becomes a coordinate (over $A$) upon going modulo $x$, and $f$ becomes a coordinate (over $A[x,x^{-1}]$) upon inverting $x$.  We study the question of when a strongly residual coordinate in $A[x]^{[n]}$ is a coordinate, a question closely related to the Dolgachev-Weisfeiler conjecture.  It is known that all strongly residual coordinates are coordinates for $n=2$ .  We show that a large class of strongly residual coordinates that are generated by elementaries over $A[x,x^{-1}]$ are in fact coordinates for arbitrary $n$, with a stronger result in the $n=3$ case.  As an application, we show that all \Vtype polynomials are 1-stable coordinates.
\end{abstract}

\section{Introduction}

Let $A$ (and all other rings) be a commutative ring with one.  An {\em $A$-coordinate} (if $A$ is understood, we simply say {\em coordinate}; some authors prefer the term {\em variable}) is a polynomial $f \in A^{[n]}$ for which there exist $f_2,\ldots, f_n \in A^{[n]}$ such that $A[f,f_2,\ldots,f_n]=A^{[n]}$.  It is natural to ask when a polynomial is a coordinate; this question is extremely deep and has been studied for some time.  There are several longstanding conjectures giving a criteria for a polynomial to be a coordinate:

\begin{conjecture}[Abhyankar-Sathaye]\label{AS}
Let $A$ be a ring of characteristic zero, and let $f \in A^{[n]}$.  If $A^{[n]}/(f) \cong A^{[n-1]}$, then $f$ is an $A$-coordinate.
\end{conjecture}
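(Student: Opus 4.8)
The plan is to prove Conjecture~\ref{AS} by induction on $n$, uniformly over all characteristic-zero rings $A$; the cases $n=1$ (trivial) and $n=2$ (known: the Abhyankar--Moh--Suzuki epimorphism theorem and its ring-theoretic strengthenings, including the $n=2$ strongly residual case mentioned above) serve as the base. So fix $n \ge 3$, assume Conjecture~\ref{AS} in all dimensions below $n$ over \emph{every} characteristic-zero ring, and let $f \in A^{[n]}$ with $R := A^{[n]}/(f) \cong A^{[n-1]}$. Write $A^{[n]} = A[x]^{[n-1]}$ for some coordinate $x$ of $A^{[n]}$. The strategy has three moves: (1) choose $x$ so that its image $\bar x \in R$ is a coordinate of $R$; (2) deduce that $f$ is a \emph{strongly residual coordinate} in $A[x]^{[n-1]}$; (3) conclude that $f$ is a coordinate by the theorem of this paper on strongly residual coordinates.

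Step (2) is \emph{forced} by the inductive hypothesis once (1) holds. Modulo $x$ we have $A^{[n]}/(x) \cong A^{[n-1]}$ and $A^{[n-1]}/(f \bmod x) = R/(\bar x) \cong A^{[n-2]}$ (using that $\bar x$ is a coordinate of $R \cong A^{[n-1]}$), so Conjecture~\ref{AS} in dimension $n-1$ over $A$ makes $f \bmod x$ an $A$-coordinate. Inverting $x$ we have $A^{[n]}[x^{-1}] \cong A[x,x^{-1}]^{[n-1]}$ and $A[x,x^{-1}]^{[n-1]}/(f) = R[\bar x^{-1}] \cong A[x,x^{-1}]^{[n-2]}$, so Conjecture~\ref{AS} in dimension $n-1$ over the characteristic-zero ring $A[x,x^{-1}]$ makes $f$ an $A[x,x^{-1}]$-coordinate. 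Hence $f$ is a strongly residual coordinate in $A[x]^{[n-1]}$, and one invokes the main theorem of the paper.

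The main obstacle is Step~(1): a lift to $A^{[n]}$ of a coordinate of the quotient $R = A^{[n]}/(f)$ need not itself be a coordinate of $A^{[n]}$, and there is no known mechanism guaranteeing that \emph{some} coordinate $x$ of $A^{[n]}$ maps onto a coordinate of $R$ — this is essentially a disguised instance of the difficulty of Conjecture~\ref{AS} itself. A secondary obstacle is that the concluding appeal requires the \emph{unrestricted} assertion ``every strongly residual coordinate in $A[x]^{[n-1]}$ is a coordinate,'' whereas the theorem proved here covers only those generated by elementaries over $A[x,x^{-1}]$ (with a sharper statement for $n=3$). Consequently this plan will, realistically, establish Conjecture~\ref{AS} precisely in the cases where one can both (i) exhibit a suitable $x$ and (ii) check that the witnessing automorphism of $A[x,x^{-1}]^{[n-1]}$ is elementary-generated — which is exactly the situation exploited in this paper to prove that \Vtype polynomials are $1$-stable coordinates. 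Removing either restriction appears to require genuinely new input beyond the present methods.
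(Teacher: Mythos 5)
What you have written is a reduction strategy, not a proof, and you say as much yourself; but to be clear about where it stands relative to the paper: Conjecture~\ref{AS} is stated here as an open conjecture, the paper contains no proof of it, and your plan does not close the gap. Three specific breakdowns. First, your base case is not available in the generality your induction needs: the epimorphism theorem is known only when $A$ is a field, whereas your inductive step at $n=3$ already invokes the $n=2$ case of Conjecture~\ref{AS} over the non-field rings $A$ and $A[x,x^{-1}]$. Note also that Theorem~\ref{n2} of the paper is not a substitute --- its hypothesis is that $f$ is a strongly residual coordinate in $A[x]^{[2]}$, not that $A^{[2]}/(f)\cong A^{[1]}$, so it cannot launch your induction. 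Second, Step~(1) --- producing a coordinate $x$ of $A^{[n]}$ whose residue class is a coordinate of $R=A^{[n]}/(f)$ --- has no mechanism behind it: the isomorphism $R\cong A^{[n-1]}$ is abstract and carries no compatibility with any coordinate system of $A^{[n]}$, and arranging such compatibility is essentially the content of the conjecture itself, so the induction is circular at this point. Third, even granting (1) and (2), the final appeal fails: the paper does not prove that strongly residual coordinates in $A[x]^{[n-1]}$ are coordinates; that assertion is itself recorded here as a conjecture, and Theorems~\ref{AT1} and~\ref{AT2} (and Main Theorems 1 and 2) apply only to polynomials exhibited as $\theta(y_j)$ for compositions of elementaries over $S=A[x,x^{-1}]$ satisfying the $\tau_i$- or $\sigma_i$-minimality hypotheses, with the stronger $n=2$ (two $z$-variables) statement requiring $A$ to be a domain of characteristic zero. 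Nothing in your setup produces such an elementary factorization for the witnessing $S$-automorphism, so the concluding step has no theorem to invoke.

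In short, your Step~(2) is a correct and standard observation --- if a coordinate $x$ of $A^{[n]}$ restricts to a coordinate of the quotient, then the Abhyankar--Sathaye hypothesis in lower dimension over $A$ and over $A[x,x^{-1}]$ makes $f$ a strongly residual coordinate --- but both flanking steps rest on statements that are open (and one of them is equivalent in difficulty to the conjecture you are trying to prove), so the proposal establishes nothing beyond the conditional cases you already identify, which are precisely the situations the paper itself handles directly.
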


\begin{conjecture}[Dolgachev-Weisfeiler] \label{DW}
Suppose $A=\IC^{[r]}$, and let $f \in A^{[n]}$.  If $A[f] \hookrightarrow A^{[n]}$ is an affine fibration, then $f$ is an $A$-coordinate.
\end{conjecture}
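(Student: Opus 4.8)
Set $B=A[f]\subseteq A^{[n]}$. Because $A[f]\hookrightarrow A^{[n]}$ is an affine fibration of relative dimension $n-1$, comparing Krull dimensions gives $\dim B=\dim A+1$, so $f$ is transcendental over $A$; hence the natural surjection $A^{[1]}\twoheadrightarrow B$ is an isomorphism, $B\cong\IC^{[r+1]}$, and $f$ is automatically an $A$-coordinate of $B$. Consequently the conjecture reduces to proving that the fibration is \emph{trivial}, i.e.\ that $A^{[n]}\cong B^{[n-1]}$ as a $B$-algebra: choosing $B$-algebra generators $g_2,\dots,g_n$ realizing such an isomorphism yields $A^{[n]}=B[g_2,\dots,g_n]=A[f,g_2,\dots,g_n]$, which is exactly the assertion that $f$ is an $A$-coordinate of $A^{[n]}$. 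I would prove triviality in two stages: (1)~show the fibration is Zariski-locally trivial over $\Spec B$; (2)~globalize, using that $B$ is a polynomial ring over a field.

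Stage~(2) is the routine half. Once $A^{[n]}$ is known to be a \emph{locally polynomial} $B$-algebra, the Bass--Connell--Wright theorem identifies it with the symmetric algebra $\mathrm{Sym}_B(P)$ of a finitely generated projective $B$-module $P$ of rank $n-1$; since $B=\IC^{[r+1]}$ is a polynomial ring over a field, Quillen--Suslin forces $P\cong B^{\,n-1}$, so that $A^{[n]}\cong\mathrm{Sym}_B(B^{\,n-1})=B^{[n-1]}$. The characteristic-zero hypothesis enters here only through Bass--Connell--Wright; its decisive role is in Stage~(1), the positive-characteristic analogue of which is known to fail.

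Stage~(1) is the heart of the matter and the step I expect to be the main obstacle: it is where the affine-fibration hypothesis and the structure of $\IC$ must genuinely be used, and where the known partial cases of the conjecture all do their work. Three sub-approaches suggest themselves. (i)~\emph{Descending the relative dimension}: factor $\Spec A^{[n]}\to\Spec B$ through an intermediate affine $\mathbb{A}^1$-fibration and induct, exploiting that $\mathbb{A}^1$-fibrations over regular rings are well understood (Kambayashi--Miyanishi, Sathaye, Asanuma); the difficulty is to produce the intermediate partial coordinate, which is essentially a relative form of the original problem. (ii)~\emph{Generic triviality and propagation}: the fibration is trivial over the fraction field of $B$ (via cancellation and cylinder results for affine space over a field), hence over a dense open $U\subseteq\Spec B$, and one then tries to propagate triviality across the finitely many codimension-one components of $\Spec B\setminus U$ by a purity argument --- which reduces to controlling affine fibrations over the discrete valuation rings occurring as the local rings of $B$ at its height-one primes. (iii)~\emph{Low-dimensional input}: for small $r$ or small $n$, substitute explicit classification results (Abhyankar--Moh--Suzuki and Sathaye when $n=2$, cylinder theorems for affine threefolds), which is how the known cases are obtained. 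My plan would be to combine (i) and (ii): induct on $n$ using (i), and at each step invoke (ii) to reduce to a discrete-valuation-ring base. The crux --- and precisely the point at which the methods of the present paper, namely local analysis after inverting a variable together with an elementary reduction of the transition data over $A[x,x^{-1}]$, are meant to gain purchase --- is this discrete-valuation-ring base case in relative dimension $\ge 2$, where I expect the genuinely new input to be required.
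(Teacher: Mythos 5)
This statement is one of the paper's background \emph{conjectures}: it is open, the paper never proves it, and the paper's actual results only establish very special cases (e.g.\ strongly residual coordinates over $A[x]$ in two variables, and those generated by elementaries over $A[x,x^{-1}]$ subject to the hypotheses of the Main Theorems). Your proposal is likewise not a proof. The correct and standard part is the framing: $f$ is an $A$-coordinate of $A^{[n]}$ if and only if the fibration $A[f]\hookrightarrow A^{[n]}$ is trivial as a $B$-algebra ($B=A[f]\cong A^{[1]}$), and your Stage~(2) is a known reduction --- once $A^{[n]}$ is a locally polynomial $B$-algebra, Bass--Connell--Wright identifies it with $\mathrm{Sym}_B(P)$ and Quillen--Suslin over $B\cong\IC^{[r+1]}$ makes $P$ free. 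But Stage~(1), local triviality of the fibration, is precisely the entire content of the Dolgachev--Weisfeiler conjecture, and your text does not prove it; it only lists strategies and explicitly concedes that ``genuinely new input'' is required at the crux. A plan that defers the decisive step is a research program, not a proof.

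Concretely, each of your sub-approaches stalls at a known open problem rather than at a routine verification. In (i), producing the intermediate partial coordinate is, as you note, a relative form of the same conjecture, so the induction has no base to stand on beyond $n=2$ (Asanuma--Hamann) and the Sathaye-dependent case $n=3$, $A=\IC$. In (ii), generic triviality is immediate (spread out the isomorphism at the generic fiber), but there is no purity or codimension-one descent theorem for affine fibrations that lets you propagate triviality from a dense open set plus the height-one local rings; and the ``discrete-valuation-ring base, relative dimension $\ge 2$'' case you isolate as the crux is itself open --- indeed it is essentially the setting of the V\'en\'ereau polynomial, which this paper cites as the prototypical unresolved example and only handles in weakened forms (as a $1$-stable coordinate, or for special strongly residual coordinates with elementary transition data over $S=A[x,x^{-1}]$). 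So the gap is not a missing lemma inside an otherwise complete argument: the argument reduces the conjecture to statements that are equivalent to, or harder than, the conjecture's known open cases, and the paper you are comparing against does not claim otherwise --- it proves partial results toward this conjecture, not the conjecture itself.
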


\begin{conjecture}Let $A$ be a ring of characteristic zero, and let $f \in A^{[n]}$.  If $f$ is a coordinate in $A^{[n+m]}$ for some $m>0$, then $f$ is a coordinate in $A^{[n]}$.
\end{conjecture}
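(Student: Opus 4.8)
This statement is one of the long-standing ``stable coordinate $=$ coordinate'' conjectures, so rather than a complete argument I will describe the most natural line of attack and indicate precisely where it stalls. The plan is to reduce the claim to two of the central conjectures of affine algebraic geometry: the Zariski Cancellation Problem and the Abhyankar--Sathaye Conjecture.

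First I would pass from automorphisms to residue rings. Write $A^{[n+m]} = A[x_1,\dots,x_n][y_1,\dots,y_m]$ with $f \in A[x_1,\dots,x_n] = A^{[n]}$. If $f$ is an $A$-coordinate in $A^{[n+m]}$, then $A^{[n+m]}/(f) \cong A^{[n+m-1]}$ as $A$-algebras. Since $f$ involves none of the $y_i$, we may also write
\[
A^{[n+m]}/(f) \;\cong\; \bigl(A^{[n]}/(f)\bigr)[y_1,\dots,y_m].
\]
Setting $R := A^{[n]}/(f)$ and using $A^{[n+m-1]} = (A^{[n-1]})^{[m]}$, this gives $R^{[m]} \cong (A^{[n-1]})^{[m]}$; that is, $R$ is stably isomorphic over $A$ to $A^{[n-1]}$.

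Next I would cancel the extra variables: a cancellation theorem for $A$-algebras would upgrade $R^{[m]} \cong (A^{[n-1]})^{[m]}$ to $R \cong A^{[n-1]}$, i.e. $A^{[n]}/(f) \cong A^{[n-1]}$. Finally, applying the Abhyankar--Sathaye Conjecture (Conjecture~\ref{AS}) to $f \in A^{[n]}$ with $A^{[n]}/(f) \cong A^{[n-1]}$ would yield that $f$ is an $A$-coordinate in $A^{[n]}$, completing the argument. Notice that the characteristic-zero hypothesis is essential to this approach: the cancellation input is known to fail in positive characteristic by Gupta's examples.

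The main obstacle is that this reduction does not actually settle the conjecture — it only trades it for the conjunction of Cancellation and Abhyankar--Sathaye, both of which are themselves open in the stated generality. Unconditionally one gets only the low-dimensional case: for $n = 2$ over a field of characteristic zero, $R = k^{[2]}/(f)$ has transcendence degree $1$, so Abhyankar--Eakin--Heinzer cancellation gives $R \cong k^{[1]}$ and then Abhyankar--Moh--Suzuki (the $n=2$ instance of Conjecture~\ref{AS}) gives that $f$ is a coordinate. Beyond that, one might instead argue directly at the level of automorphisms — take $\Phi \in \Aut_A(A^{[n+m]})$ with $\Phi(x_1) = f$ and modify it, without altering its action on $f$, until it restricts to an automorphism of $A^{[n]}$ — which is the stable-tameness philosophy this paper carries out for strongly residual coordinates assembled from elementaries; but in general there is no known device to strip off the dependence of $\Phi$ on $y_1,\dots,y_m$, so an unconditional proof is not expected by this route either.
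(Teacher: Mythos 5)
There is no proof in the paper to compare yours against: the statement you were assigned is the third of the three open conjectures quoted in the introduction as motivation (the ``stable coordinate'' problem), and the paper never claims to prove it. Its actual contribution runs in the opposite direction --- it produces polynomials (the V\'en\'ereau-type polynomials) that are shown to be $1$-stable coordinates, i.e.\ to satisfy the \emph{hypothesis} of this conjecture with $m=1$, while their status as honest coordinates in $\IC[x]^{[3]}$ remains open. You correctly recognized that the statement is open, and your conditional reduction is sound as far as it goes: if $f$ is a coordinate of $A^{[n+m]}$ then indeed $A^{[n+m]}/(f)\cong A^{[n+m-1]}$, and since $f$ does not involve the extra variables this yields $R^{[m]}\cong (A^{[n-1]})^{[m]}$ for $R=A^{[n]}/(f)$; cancellation plus Conjecture \ref{AS} would then finish. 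The unconditional $n=2$ case over a field of characteristic zero via Abhyankar--Eakin--Heinzer cancellation in transcendence degree one followed by Abhyankar--Moh--Suzuki is also correct.

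Two remarks on the reduction itself. First, it discards information: being a coordinate of $A^{[n+m]}$ is strictly stronger than having a polynomial-ring quotient. It makes $A^{[n+m]}$ a trivial $\IAA^{n+m-1}$-bundle over $A[f]$, so flatness of $A^{[n]}$ over $A[f]$ descends along the faithfully flat extension $A^{[n]}\subset A^{[n+m]}$, and every fibre of $A[f]\hookrightarrow A^{[n]}$ becomes a polynomial ring after adjoining $m$ variables; modulo cancellation over fields, $f$ is therefore a residual coordinate, and the sharper conditional target is the Dolgachev--Weisfeiler conjecture (Conjecture \ref{DW}) rather than Abhyankar--Sathaye. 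This is the framing the paper itself adopts, and it is why the paper attacks the problem at the level of automorphisms over $S=A[x,x^{-1}]$ (your ``modify $\Phi$ without altering its action on $f$'' suggestion) rather than at the level of quotient rings. Second, since the statement is an open conjecture, your write-up should be read as a correct account of why no proof is currently available, not as a proof; as a submission for this particular statement it is the honest answer.
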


The Abhyankar-Sathaye conjecture is known only for $A$ a field and $n=2$ (due to Abhyankar and Moh \cite{AM} and Suzuki \cite{Suzuki}, independently).  The $n=2$ case of the Dolgachev-Weisfeiler conjecture follows from results of Asanuma \cite{Asanuma} and Hamann \cite{Hamann}.  The case where both $n=3$ and $A=\IC$ follows from a theorem of Sathaye \cite{Sathaye}; see \cite{FreudDaigle} for more details on the background of the Dolgachev-Weisfeiler conjecture.

There are several examples of polynomials satisfying the hypotheses of these conjectures whose status as a coordinate remains open.  Many are constructed via a slight variation of the following classical method for constructing exotic automorphisms of $A^{[n]}$: let $x \in A$ be a nonzero divisor.  One may easily construct elementary automorphisms (those that fix $n-1$ variables) of $A_x ^{[n]}$; then, one can carefully compose these automorphisms (over $A_x$) to produce an endomorphism of $A^{[n]}$.  It is a simple application of the formal inverse function theorem to see that such maps must, in fact, be automorphisms of $A^{[n]}$.  The well known Nagata map arises in this manner:
\begin{align*}
\sigma &= (y+x(xz-y^2),z+2y(xz-y^2)+x(xz-y^2)^2) \\&= (y,z+\frac{y^2}{x}) \circ (y+x^2z,z) \circ (y,z-\frac{y^2}{x})
\end{align*}
While the Nagata map is generated over $\IC[x,x^{-1}]$ by three elementary automorphisms, Shestakov and Umirbaev \cite{SU} famously proved that it is wild (i.e. not generated by elementary and linear automorphisms) as an automorphism of $\IC[x,y,z]$ over $\IC$.  

When interested in producing exotic polynomials, we may relax the construction somewhat; let $y$ be a variable of $A^{[n]}$, and compose elementary automorphisms of $A_x^{[n]}$ until the resulting map has its $y$-component in $A^{[n]}$.  For example, the \Venereau polynomial $f=y+x(xz+y(yu+z^2))$ arises as the $y$-component of the following automorphism over $\IC[x,x^{-1}]$
\begin{align}
\phi &= (y+x^2z,z,u) \circ (y,z+\frac{y(yu+z^2)}{x},u-\frac{2z(yu+z^2)}{x}-y(yu+z^2)^2) \label{vpoly}
\end{align}

This type of construction motivates the following definition:
\begin{definition}
A polynomial $f \in A[x] ^{[n]}$ is called a {\em strongly residual coordinate} if $f$ is a coordinate over $A[x,x^{-1}]$ and if $\bar{f}$, the image modulo $x$, is a coordinate over $A$.
\end{definition}

The \Venereau polynomial is perhaps the most widely known example of a strongly residual coordinate that satisfies the hypotheses of the three conjectures (with $A=\IC[x]$), yet it is an open question whether it is a coordinate (see \cite{VThesis}, \cite{KVZ}, \cite{Freud}, and \cite{Vtype}, among others, for more on that particular question).

One may observe that the second automorphism in the above composition \eqref{vpoly} is essentially the Nagata map, and is wild over $\IC[x,x^{-1}]$.  The wildness of this map is a  crucial difficulty in resolving the status of the \Venereau polynomial.  Our present goal is to show that a large class of strongly residual coordinates generated by maps that are elementary over $\IC[x,x^{-1}]$ are coordinates.  Our methods are quite constructive and algorithmic, although the computations can become unwieldy quite quickly.  One application is to show that all \Vtype polynomials, a generalization of the \Venereau polynomial studied by the author in \cite{Vtype}, are one-stable coordinates (coming from the fact that the Nagata map is one-stably tame).  Additionally, we also very quickly recover a result of Russell (Corollary \ref{russell}) on coordinates in 3 variables over a field of characteristic zero.


\section{Preliminaries}
Throughout, we set $R=A[x]$ and $S=R_x=A[x,x^{-1}]$.  We adopt the standard notation for automorphism groups of the polynomial ring $A^{[n]}=A[z_1,\ldots,z_n]$:
\begin{enumerate}
\item $\GA_n(A)$ denotes the general automorphism group $\Aut _{\Spec A} (\Spec A^{[n]})$, which is antiisomorphic to $\Aut _A A^{[n]}$ (some authors choose to define it as the latter).
\item $\EA_n(A)$ denotes the subgroup generated by the elementary automorphisms; that is, those fixing $n-1$ variables.
\item $\TA_n(A)=\langle \EA_n(A), \GL_n(A) \rangle$ is the tame subgroup.
\item $\D_n(A) \leq \GL_n(A)$ is the subgroup of diagonal matrices.
\item $\P_n(A) \leq \GL_n(A)$ is the subgroup of permutation matrices.
\item $\GP_n(A) = \D_n(A)\P_n(A) \leq \GL_n(A)$ is the subgroup of generalized permutation matrices.
\end{enumerate}
We also make one non-standard definition when working over $R=A[x]$:
\begin{enumerate}[resume]
\item $\IA _n(R) = \{\phi \in \GA _n(R)\ |\ \phi \equiv \id \pmod{x}\}$ is the subgroup of all automorphisms that are equal to the identity modulo $x$.  It is the kernel of the natural map $\GA_n (R) \rightarrow \GA_n(A)$.
\end{enumerate}
\begin{remark}In fact, the surjection $\GA_n(R) \rightarrow \GA_n(A)$ splits (by the natural inclusion), so we have $\GA_n(R) \cong \IA_n(R) \rtimes \GA_n(A)$.
\end{remark}

\begin{definition}
Let $f_1, \ldots f_m \in R^{[n]}$.
\begin{enumerate}
\item $(f_1,\ldots,f_m)$ is called a {\em partial system of coordinates} (over $R$) if there exists $g_{m+1}, \ldots, g_n \in R^{[n]}$ such that $(f_1,\ldots,f_m,g_{m+1},\ldots,g_n) \in \GA_n(R)$.
\item $(f_1, \ldots, f_m)$ is called a {\em partial system of residual coordinates} if $R[f_1, \ldots, f_m] \hookrightarrow R^{[n]}$ is an affine fibration; that is, $R^{[n]}$ is flat over $R[f_1,\ldots,f_m]$ and for each prime ideal $\mathfrak{p} \in \Spec{R[f_1,\ldots,f_m]}$, $R^{[n]} \tensor _{R[f_1,\ldots,f_m]} \kappa(\mathfrak{p}) \cong \kappa(\mathfrak{p})^{[n-m]}$.
\item $(f_1,\ldots,f_m) $ is called a {\em partial system of strongly $x$-residual coordinates} if $(f_1,\ldots,f_m)$ is a partial system of coordinates over $S$ and $(\bar{f_1},\ldots,\bar{f_m})$, the images modulo $x$, is a partial system of coordinates over $A=\bar{R}=R/xR$.  If $x$ is understood, we may simply say {\em strongly residual coordinate}.
\end{enumerate}
A single polynomial is called a {\em coordinate} (respectively {\em residual coordinate}, {\em strongly residual coordinate}) when $m=1$ in the above definitions.
\end{definition}

\begin{remark}
If $A$ is a field, then strongly residual coordinates are residual coordinates.
\end{remark}

In light of this definition, the Dolgachev-Weisfeiler conjecture can be stated in this context as
\begin{conjecture}
Partial systems of residual coordinates are partial systems of coordinates
\end{conjecture}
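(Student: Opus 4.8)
The statement is the Dolgachev--Weisfeiler conjecture recast in the language of Section~2, so any argument along the lines below will be conditional at the two points where that conjecture is genuinely hard; I nonetheless sketch the strategy I would follow.

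The first step is a d\'evissage reducing to a single residual coordinate. Given a partial system of residual coordinates $(f_1,\dots,f_m)$ over $R$, one checks, using the permanence properties of affine fibrations (cf.\ \cite{Asanuma}), that $(f_1,\dots,f_{m-1})$ is again a partial system of residual coordinates and that $R[f_1,\dots,f_{m-1}] \hookrightarrow R[f_1,\dots,f_m]$ is an affine fibration of relative dimension one. Inducting on $m$, we may assume $(f_1,\dots,f_{m-1})$ is a partial system of coordinates, and after composing with a suitable element of $\GA_n(R)$ we may take $f_i = z_i$ for $i < m$; then $B := R[z_1,\dots,z_{m-1}] = A^{[m-1]}[x]$ is once more a polynomial ring in one variable over a characteristic-zero domain and $f_m$ is a single residual coordinate of $B^{[n-m+1]}$ over $B$. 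So it suffices to show that a residual coordinate $f \in R^{[n]}$ is a coordinate. For $n = 1$ this is immediate, since the fibration has relative dimension zero and hence $R[f] = R^{[1]}$; for $n = 2$ it follows from the results of Asanuma \cite{Asanuma} and Hamann \cite{Hamann} recalled in the introduction; and for $n = 3$ over a field it is covered by the Abhyankar--Moh \cite{AM}, Suzuki \cite{Suzuki}, and Sathaye \cite{Sathaye} theorems (together with Russell's extension to arbitrary characteristic-zero fields). These serve as the base cases.

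For general $n$ the plan is a Quillen--Suslin-style patching. One first reduces, by a direct-limit and localization argument, to the case that $A$ is local --- indeed the localization of a finitely generated $\IZ$-algebra at a maximal ideal --- and then, via completion and Artin approximation, to the case $A$ a field, already handled above. The patching is what lets one descend from the local picture: one shows that $R[f] \hookrightarrow R^{[n]}$ is locally trivial along $\Spec A$, i.e.\ that there is a finite open cover $\Spec A = \bigcup_i U_i$ such that $f$ completes over each $R_i := \mathcal{O}(U_i)[x]$ to a coordinate system $(f, g_2^{(i)}, \dots, g_n^{(i)}) \in \GA_n(R_i)$; over a double overlap the two completions differ by an element of $\GA_{n-1}(R_{ij})$; and the problem is to adjust the $g^{(i)}$ --- using the splitting $\GA_{n-1}(R) \cong \IA_{n-1}(R) \rtimes \GA_{n-1}(A)$ to strip off the part that is trivial modulo $x$, and stratifying $\Spec A$ --- so that this transition cocycle becomes trivial and the local completions glue to a global one.

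The main obstacle is that both ingredients of the previous paragraph are open in the stated generality. Establishing the local triviality of $R[f] \hookrightarrow R^{[n]}$ is essentially the Bass--Connell--Wright problem, known only for $A$ regular of small dimension; and even granted local trivializations, the patching requires trivializing a cocycle valued in $\GA_{n-1}$, which by Shestakov--Umirbaev \cite{SU} is already wild in relative dimension two over a Laurent polynomial ring, so there is no structure theory of the transition automorphisms to bring to bear. This is precisely the difficulty the present paper sidesteps by restricting to systems generated by elementary automorphisms over $S$, where the cocycle can be written down and killed by hand; I expect that removing that restriction --- that is, proving the conjecture in full --- needs a genuinely new idea rather than the reductions above.
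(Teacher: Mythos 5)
There is nothing in the paper for your argument to be measured against: the statement you were given is one of the paper's \emph{conjectures} (the Dolgachev--Weisfeiler conjecture rephrased via the definition of residual coordinates), and the paper neither proves it nor claims to. What the paper actually establishes is much narrower: the $n=2$ case for a single strongly residual coordinate (Theorem~\ref{n2}), and, for arbitrary $n$, that strongly residual coordinates arising from specific compositions of elementaries over $S=A[x,x^{-1}]$ are coordinates (Theorems~\ref{AT1} and~\ref{AT2}, via Main Theorems 1 and 2). Your own text concedes that what you wrote is a conditional strategy rather than a proof, and that assessment is correct: the two pillars of your patching plan --- local triviality of $R[f]\hookrightarrow R^{[n]}$ along $\Spec A$, and the trivialization of a transition cocycle valued in $\GA_{n-1}$ --- are not lemmas waiting to be checked, they \emph{are} the open content of the conjecture, so the sketch is circular at exactly the decisive points.

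Two further concrete problems, beyond the ones you flag yourself. First, your reduction chain ``localize, complete, apply Artin approximation, reduce to $A$ a field, already handled above'' does not land in handled territory: for a field and $n\geq 4$ the statement that a residual coordinate is a coordinate is itself open (the cases you list cover only $n\leq 2$ in general and $n=3$ over $\IC$ via Sathaye \cite{Sathaye}), so even granting the reduction there is no base case to reduce to. Also note that the correct references for the two-variable residual-coordinate case are Kambayashi--Miyanishi \cite{KambayashiMiyanishi} and Kambayashi--Wright \cite{KambayashiWright}, as the paper remarks after Theorem~\ref{n2}; Abhyankar--Moh and Suzuki concern the Abhyankar--Sathaye conjecture, not fibrations. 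Second, the d\'evissage in your first paragraph needs justification that is not automatic: after using the inductive hypothesis to set $f_i=z_i$ for $i<m$, you must verify that $f_m$ is a residual coordinate of $B^{[n-m+1]}$ over $B=R[z_1,\ldots,z_{m-1}]$, i.e.\ that the fibration hypothesis descends through the intermediate ring; this permanence statement is plausible but requires an argument (flatness plus fibrewise polynomiality of $R[f_1,\ldots,f_m]$ over $R[f_1,\ldots,f_{m-1}]$), and as written it is asserted rather than proved. In short, the proposal correctly identifies the landscape but does not constitute a proof, and no comparison with a paper proof is possible because the paper leaves the statement as a conjecture.
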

Similarly, we have
\begin{conjecture}Partial systems of strongly residual coordinates are partial systems of coordinates.
\end{conjecture}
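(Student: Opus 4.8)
The conjecture as stated is presumably out of reach in full generality --- it is a close cousin of Dolgachev--Weisfeiler --- so the realistic target is the subclass of strongly residual coordinates whose $S$-coordinate structure is witnessed by an automorphism of $S^{[n]}$ generated by elementaries (and $\GL_n(S)$), which is the case flagged in the abstract. Fix a partial system of strongly residual coordinates $(f_1,\dots,f_m)$ in $R^{[n]}$. The plan is first to normalize modulo $x$: since $(\bar f_1,\dots,\bar f_m)$ is a partial coordinate system over $A$ and the reduction $\GA_n(R)\to\GA_n(A)$ is split surjective (via $\GA_n(R)\cong\IA_n(R)\rtimes\GA_n(A)$), we may lift the $A$-automorphism carrying $\bar f_i$ to $z_i$ to an $R$-automorphism and apply it, so that henceforth $f_i\equiv z_i\pmod x$, i.e.\ $f_i=z_i+xg_i$ with $g_i\in R^{[n]}$. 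This costs nothing, since being a partial coordinate system over $R$, over $S$, and over $A$ are each preserved under applying an element of $\GA_n(R)$.

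On the $S$-side there is $\Theta\in\GA_n(S)$ whose first $m$ components are $f_1,\dots,f_m$, and the working hypothesis is a factorization $\Theta=\varepsilon_k\circ\cdots\circ\varepsilon_1$ with each $\varepsilon_j$ elementary over $S$ (any $\GL_n(S)$ factors one absorbs or treats separately). Writing the perturbation in $\varepsilon_j$ as $p_j/x^{d_j}$ with $p_j\in R^{[n]}$, the goal is to manufacture $\tilde\Theta\in\GA_n(R)$ with the same first $m$ components. The engine I would use is a clearing-of-denominators induction: apply the elementary ``shuffle'' relations to commute factors past one another so as to push the outermost factor toward being an elementary whose altered component already lies in $R^{[n]}$ --- such a factor is then automatically elementary over $R$ --- peel it off, and recurse on a shorter word of smaller total denominator. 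The congruence $f_i\equiv z_i\pmod x$ is what forces the relevant numerators to be divisible by $x$ to high enough order for the recursion to close.

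The genuine obstacle lies precisely here: shuffling two elementaries acting on the same variable composes their perturbations, which can momentarily \emph{raise} the power of $x$ in a denominator, so one needs either a monovariant (a length in $\EA_n(S)$ relative to the subgroup $\IA_n(R)\cdot\EA_n(R)$, say) that provably decreases, or a normal form into which every such word can be put. For words that simply cannot be untangled over $R$ --- the Nagata-type factor appearing in the \Venereau construction \eqref{vpoly} being the prototype --- I would stabilize: add one variable and use that the Nagata automorphism of $A[x]^{[2]}$ is $1$-stably tame, so the recalcitrant piece becomes tame after one stabilization and can then be shuffled and peeled as before. This already yields the weaker conclusion that the strongly residual coordinates in question are $1$-stable coordinates, which is exactly what the \Vtype application needs.

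Finally, for $n=3$ one should be able to remove the extra variable: pass to the field $K=\operatorname{Frac}(A)$, where strong structural results are available both for $\GA_n(K[x])$ and for coordinates of $K^{[3]}$ (this is the point at which Russell's theorem enters, giving Corollary \ref{russell} essentially for free), and then descend back to $A[x]$ by one more round of denominator tracking. I expect the bookkeeping of the third paragraph to be the real difficulty; it is also exactly where the method must break for an arbitrary strongly residual coordinate, since when $\Theta$ is only known to exist abstractly there is no factorization of it available to shuffle --- which is why the conjecture in its full generality remains open.
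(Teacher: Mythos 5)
This statement is one of the paper's \emph{conjectures}; the paper offers no proof of it, and neither do you. You correctly recognize that the full statement is out of reach and retreat to the subclass witnessed by compositions of elementaries over $S$, and your high-level program for that subclass does match the paper's: normalize modulo $x$ using the splitting $\GA_n(R)\cong\IA_n(R)\rtimes\GA_n(A)$, factor the $S$-automorphism into elementaries, and clear denominators by commuting and peeling factors. But the entire mathematical content lies in the step you explicitly flag and then skip: you need ``either a monovariant that provably decreases, or a normal form,'' and you supply neither. That is precisely what the paper's technical core provides, via the weighted subrings $A_\tau$, the subgroups $\IA_{m+n}^\tau(R)$ and $\EA_n^\tau(R^{[m]})$, the minimal $\tau_i$-sequences attached to a factorization, and the chain of lemmas (notably Lemma \ref{strongIA}, Theorem \ref{crucial}, and for $n=2$ the delicate degree bookkeeping of Lemmas \ref{PhiEAtau} and \ref{onlyOnePhi}) culminating in Main Theorems 1 and 2. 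Without a substitute for that machinery your recursion has no termination argument, and the paper's own example after Lemma \ref{PhiEAtau} shows the naive denominator-clearing genuinely fails for $n\geq 3$.

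Two further points where your sketch diverges from what actually happens. First, the $1$-stable coordinate result for \Vtype polynomials is not obtained by invoking stable tameness of the Nagata factor; it is obtained by adding a variable $t$ and exhibiting the polynomial as the $y$-component of an explicit composition of elementaries in four variables whose $\sigma$-sequence is decreasing, so that Theorem \ref{AT2} applies directly (stable tameness enters only afterward, via the Berson--van den Essen--Wright theorem). Second, Russell's result (Corollary \ref{russell}) does not come from passing to $K=\operatorname{Frac}(A)$ and structure theory of $\GA_3(K[x])$; it is an immediate application of Theorem \ref{AT1} to a two-factor elementary word. So even as a description of the paper's partial results, the proposal is a correct outline of the strategy but not of the proofs, and as a proof of the stated conjecture it is not one.
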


Our main focus will be on constructing and identifying strongly residual coordinates that are coordinates, although in some cases our methods will generalize slightly to partial systems of coordinates.  While we lose some generality as compared to considering residual coordinates, we are able to use some very constructive approaches.  We first give a short, direct proof of the $n=2$ case (for coordinates) that shows the flavor of our methods:
\begin{theorem}\label{n2}Let $A$ be an integral domain of characteristic zero, and $R=A[x]$.  Let $f \in R^{[2]}$ be a strongly residual coordinate.  Then $f$ is a coordinate.
\end{theorem}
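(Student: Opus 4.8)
The plan is to exploit the fact that we are working over a $2$-dimensional polynomial situation, where the machinery of the $n=2$ case of Abhyankar--Moh--Suzuki and its relative versions is available. Let $f \in R^{[2]} = A[x][y,z]$ be a strongly residual coordinate. First I would pass to the generic situation: let $K = \operatorname{Frac}(A)$, and consider $f$ as an element of $K[x][y,z]$. Since $f$ is a coordinate over $S = A[x,x^{-1}]$, it is a coordinate over $K(x)[y,z]$, so $K(x)[y,z]/(f) \cong K(x)^{[1]}$; by the Abhyankar--Moh--Suzuki theorem (the $n=2$ case of Abhyankar--Sathaye over the field $K(x)$) this is automatic once we know the quotient is a polynomial ring, but more importantly I want a statement over $K[x]$ itself. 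The key input should be the $n=2$ Dolgachev--Weisfeiler result attributed in the excerpt to Asanuma and Hamann: if $f$ defines an affine fibration over a suitable base, then $f$ is a coordinate. So the main task reduces to verifying that $R[f] \hookrightarrow R^{[2]}$ is an affine fibration, and then invoking that result (or a direct relative-ABHS argument).

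The key steps, in order, would be: (1) Show $R^{[2]}/(f)$ is flat over $R[f]$ and has the right fibers. Flatness and the generic fiber being $\mathbb{A}^1$ follow from $f$ being a coordinate over $S$; the only fibers to worry about are those over primes containing $x$. (2) Use the hypothesis that $\bar f$, the image of $f$ in $A[y,z]$, is a coordinate over $A$: this controls exactly the fiber over $x = 0$. Combining (1) and (2), one gets that $R[f] \hookrightarrow R^{[2]}$ is an affine fibration over $R = A[x]$. (3) Now apply the $n=2$ case of the Dolgachev--Weisfeiler conjecture (Asanuma \cite{Asanuma}, Hamann \cite{Hamann}): an affine $\mathbb{A}^1$-fibration $R[f] \hookrightarrow R^{[2]}$ with $A$ a domain of characteristic zero forces $f$ to be an $R$-coordinate. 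Alternatively, one can argue more directly: over the PID $A[x]$ localized at any height-one prime, use the relative Abhyankar--Moh theorem to straighten $f$, and patch using the fact that $f$ is already a coordinate after inverting $x$ and after setting $x = 0$; the patching is exactly the content of the $n=2$ Dolgachev--Weisfeiler statement.

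The main obstacle I expect is step (1)--(2): rigorously checking the affine fibration hypothesis at the bad primes, i.e., those primes of $A[x]$ containing $x$ (and more generally primes of $A[f]$). One must ensure that knowing $f$ is a coordinate over $S$ and $\bar f$ is a coordinate over $A$ genuinely suffices to conclude the fibration condition over all of $\operatorname{Spec} R[f]$, not just over $\operatorname{Spec} R$; this requires understanding the map $\operatorname{Spec} R[f] \to \operatorname{Spec} R$ and verifying the fiber dimension and reducedness conditions fiberwise. A clean way around this, which I would pursue, is to note that for $A$ a domain of characteristic zero, an affine fibration over $A[x]$ with one-dimensional fibers is automatically trivial by the cited results, and the affine fibration property over the base $R$ can be checked by the Asanuma criterion using only the two degenerations we control. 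Once that bookkeeping is done, the conclusion is immediate from the $n=2$ Dolgachev--Weisfeiler theorem, and no further computation is needed.
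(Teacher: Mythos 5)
Your proposal takes a genuinely different route from the paper, and it is essentially the route the paper itself points to in the remark immediately following its proof (``Analogous results for residual coordinates are due to Kambayashi and Miyanishi and Kambayashi and Wright''). The fiberwise part of your step (1)--(2) is in fact fine and easy: a prime of $R=A[x]$ either contains $x$, in which case its residue field is a residue field of $A$ and the hypothesis on $\bar f$ applies, or it does not, in which case it survives in $S=A[x,x^{-1}]$ and the hypothesis that $f$ is an $S$-coordinate applies. So $f$ is a residual coordinate in the fiberwise sense, and for Noetherian $A$ one can then quote the $n=2$ residual-coordinate theorems.

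The genuine gap is one of generality: the theorem is stated for an \emph{arbitrary} integral domain $A$ of characteristic zero, whereas the results you want to invoke (Asanuma's structure theorem for $\mathbb{A}^1$-fibrations, Hamann's theorem, Kambayashi--Miyanishi, Kambayashi--Wright) carry Noetherian or finite-presentation hypotheses, and the passage from the fiberwise condition to flatness of $R^{[2]}$ over $R[f]$ is exactly where those hypotheses are used. As written, your argument does not prove the stated theorem; it proves a weaker version, and the one step you flag as ``bookkeeping'' is precisely the step that cannot be done by citation in this generality. The paper avoids all of this with a short self-contained argument: after normalizing $f=y+xQ$, it extends $f$ to $\phi=(y+xQ,\,z+x^{-t}P)\in\GA_2(S)$ with $J\phi=1$ and $P\notin xR^{[2]}$, observes that the Jacobian condition forces $J(y,P)\in xR^{[2]}$, i.e.\ $P=P_0(y)+xP_1$, and then composes with the elementary map $(y,\,z-x^{-t}P_0(y))$ to strictly increase the $x$-order of the second component; induction on $t$ lands the automorphism in $\GA_2(R)$. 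Besides working over any domain of characteristic zero, this produces an explicit coordinate system and is the template for the rest of the paper's constructions, which is why the author chose it over the fibration-theoretic shortcut. If you want to salvage your approach, you should either add a Noetherian hypothesis explicitly or explain how to reduce the general case to the Noetherian one (e.g.\ by descending to a finitely generated subring of $A$ containing the coefficients of $f$ and of the witnessing automorphisms over $S$ and over $A$); the latter is plausible but is an additional argument you have not supplied.
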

\begin{proof}
Since $\bar{f}$ is a coordinate in $\bar{R}^{[2]}=\bar{R}[y,z]$, without loss of generality we may assume $f=y+xQ$ for some $Q \in R[y,z]$.  Since $f$ is an $S$-coordinate, perhaps after composing with a linear map, we obtain some $\phi =(y+xQ,z+x^{-t}P) \in \GA_2(S)$ with $J\phi=1$ and $P \in R^{[2]} \setminus xR^{[2]}$.  We inductively show that such a map $\phi$ is elementarily (over $S$) equivalent to a map with $t\leq0$, which gives an element of $\GA_2(R)$.  We compute 
$$J\phi = J(y,z)+xJ(Q,z)+x^{1-t}J(Q,P)+x^{-t}J(y,P)$$ 
Since $J\phi=1$, we have $xJ(Q,z)+x^{1-t}J(Q,P)+x^{-t}J(y,P)=0$.  Thus, comparing $x$-degrees, we must have $J(y,P) \in xR^{[2]}$.  This means $P=P_0(y)+xP_1$ for some $P_1 \in R^{[2]}$.  Then we have $(y,z-x^{-t}P_0(y)) \circ \phi = (y+xQ, z+x^{-t+1}P^\prime)$ for some $P^\prime \in R^{[2]}$ by Taylor's formula, allowing us to apply the inductive hypothesis.
\end{proof}
\begin{remark}Analogous results for residual coordinates are due to Kambayashi and Miyanishi \cite{KambayashiMiyanishi} and Kambayashi and Wright \cite{KambayashiWright}.\end{remark}

The $n=3$ case remains open, with the \Venereau polynomial providing the most widely known example of a strongly residual coordinate whose status as a coordinate has not been determined.

We next describe some notation necessary to state the most general form of our results.  

\begin{definition}
Given $\tau = (t_1,\ldots,t_n) \in \IN ^n$, define $A_\tau = R^{[m]}[x^{t_1}z_1,\ldots,x^{t_n}z_n]$.  We also set $A_\tau[\hat{z}_k]=A_\tau \cap R^{[m+n]}[\hat{z}_k]=R^{[m]}[x^{t_1}z_1,\ldots,\widehat{x^{t_k}z_k},\ldots,x^{t_n}z_n]$.
\end{definition}

Given $\tau \in \IN ^n$ and $\phi \in \GA _n (R^{[m]})$, we will consider the natural action $$\phi ^\tau := (x^{-t_1}z_1,\ldots,x^{-t_n}z_n) \circ \phi \circ (x^{t_1}z_1,\ldots,x^{t_n}z_n)$$  Note that algebraically, the image of this action this gives us the group $\Aut _{R^{[m]}} A_{\tau}$; we denote the corresponding automorphism group of $\Spec A_{\tau}$ by  $\GA _n ^{\tau} (R^{[m]}) \leq \GA _n (S^{[m]})$.  For any subgroup $H \leq \GA _n (R^{[m]})$, we analogously define $H^\tau = \{\phi ^\tau \ |\ \phi \in H\} \leq \GA _n ^{\tau}(R^{[m]})$.  We will concern ourselves mostly with $\EA _n ^\tau (R^{[m]})$, $\GL _n ^\tau(R^{[m]})$, $\GP _n ^\tau (R^{[m]})$, and $\IA _n ^{\tau}(R^{[m]})$.

We also define, choosing variables $R[y_1,\ldots,y_m]=R^{[m]}$, 
$$\IA _{m+n} ^\tau (R) := \IA _{m+n}^{ ({\mathbf 0},\tau)}(R) = \left\{ (y_1,\ldots,y_m,x^{-t_1}z_1,\ldots,x^{-t_n}z_n) \circ \phi \circ (y_1, \ldots, y_m,x^{t_1}z_1,\ldots,x^{t_n}z_n) \ |\ \phi \in \IA _{m+n}(R)\right\}$$ 
where $({\mathbf 0},\tau)=(0,\ldots,0,t_1,\ldots,t_n) \in \IN ^{m+n}$.  Note that $\IA _{m+n} ^\tau (R) \subset \IA _n ^\tau (R^{[m]})$.

Automorphisms in these subgroups can be characterized by the following lemma.

\begin{lemma} \label{characterize}
Let $\tau =(t_1,\ldots,t_n) \in \IN ^n$.
\begin{enumerate}
\item Let $\alpha \in \IA _{m+n} ^\tau(R)$.  Then there exist $F_1, \ldots, F_m, G_1, \ldots, G_n \in A_\tau$ such that
$$\alpha = (y_1+xF_1,\ldots,y_m+xF_m,z_1+x^{-t_1+1}G_1,\ldots,z_n+x^{-t_n+1}G_n)$$
\item Let $\Phi \in \EA _n ^\tau (R^{[m]})$ be elementary. Then there exists $P(\hat{z}_k) \in A_\tau[\hat{z}_k]$ such that
$$\Phi = (z_1,\ldots,z_{k-1},z_k+x^{-t_{k}}P(\hat{z}_k),z_{k+1},\ldots,z_n)$$
\item Let $\gamma \in \GL _n ^\tau (R^{[m]})$.  Then there exists $a_{ij} \in R^{[m]} \setminus xR^{[m]}$ such that
$$\gamma = (a_{11}z_1+a_{12}x^{t_2-t_1}z_2+\cdots+a_{1n}x^{t_n-t_1}z_n,\ldots, a_{1n}x^{t_1-t_n}z_1+\cdots+a_{n-1,n}x^{t_{n-1}-t_n}z_{n-1}+a_{nn}z_n)$$
\end{enumerate}
\end{lemma}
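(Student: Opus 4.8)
The plan is to prove all three parts by the same routine device: unwind the definition of the twisting operation $(-)^\tau$ and carry out the conjugation explicitly. Write $\delta=(y_1,\dots,y_m,x^{t_1}z_1,\dots,x^{t_n}z_n)$ (in parts (2) and (3) the $y_i$ belong to the ground ring $R^{[m]}$, so there $\delta=(x^{t_1}z_1,\dots,x^{t_n}z_n)$); every element of a twisted group $H^\tau$ has the form $\delta^{-1}\circ\phi\circ\delta$ with $\phi\in H$, and the only mechanism I will use is that conjugating by $\delta$ substitutes $z_j\mapsto x^{t_j}z_j$ into the components of $\phi$ and then divides the $z_i$-component by $x^{t_i}$, leaving the $y_i$-components otherwise alone. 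The recurring observation is that the substitution $z_j\mapsto x^{t_j}z_j$ is exactly what carries $R^{[m+n]}$ into $A_\tau$ and $R^{[m+n]}[\hat{z}_k]$ into $A_\tau[\hat{z}_k]$ --- this is how those rings were defined --- so after the conjugation the coefficients land automatically in the correct subring, and all that remains is to keep track of powers of $x$.

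For part (1) I would start from $\alpha=\delta^{-1}\circ\phi\circ\delta$ with $\phi\in\IA_{m+n}(R)$. Since $\IA_{m+n}(R)$ is by definition the kernel of reduction modulo $x$ and $x$ is a nonzerodivisor, I can write $\phi=(y_1+xf_1,\dots,y_m+xf_m,z_1+xg_1,\dots,z_n+xg_n)$ with all $f_i,g_j\in R^{[m+n]}$. Applying the conjugation recipe, the $y_i$-component of $\alpha$ becomes $y_i+xF_i$, where $F_i\in A_\tau$ is the image of $f_i$ under $z_j\mapsto x^{t_j}z_j$; and the $z_i$-component becomes $x^{-t_i}\bigl(x^{t_i}z_i+xG_i\bigr)=z_i+x^{-t_i+1}G_i$, with $G_i\in A_\tau$ the image of $g_i$. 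That is exactly the asserted normal form.

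Parts (2) and (3) are the same computation applied to generators. An elementary element of $\EA_n^\tau(R^{[m]})$ is the twist of an elementary $\psi\in\EA_n(R^{[m]})$ fixing every variable but $z_k$, with $z_k$-component $z_k+p(\hat{z}_k)$ for some $p\in R^{[m+n]}[\hat{z}_k]$. Conjugating fixes each $z_i$ with $i\neq k$, since $x^{-t_i}(x^{t_i}z_i)=z_i$, and sends $z_k$ to $x^{-t_k}\bigl(x^{t_k}z_k+P(\hat{z}_k)\bigr)=z_k+x^{-t_k}P(\hat{z}_k)$ with $P\in A_\tau[\hat{z}_k]$ the image of $p$. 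For part (3), $\gamma$ is the twist of a matrix $M\in\GL_n(R^{[m]})$ with entries $a_{ij}\in R^{[m]}$; conjugation turns its $i$-th component $\sum_j a_{ij}z_j$ into $x^{-t_i}\sum_j a_{ij}x^{t_j}z_j=\sum_j a_{ij}x^{t_j-t_i}z_j$, which is the displayed shape, and $\det(a_{ij})$ is a unit of $R^{[m]}$, hence (as $A$ is a domain of characteristic zero) lies in $A^*$ and in particular is not divisible by $x$.

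The computations themselves are pure bookkeeping, so the step I expect to require genuine care is the last one in part (3): reconciling the matrix entries $a_{ij}$ with the precise conditions attached to membership in $\GL_n^\tau(R^{[m]})$, i.e. reading off from the requirement that $\gamma$ and $\gamma^{-1}$ both preserve $A_\tau$ exactly the divisibility-by-$x$ information recorded in the statement.
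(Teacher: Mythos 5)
Your computation is exactly the intended argument: the paper states this lemma without proof, as an immediate unwinding of the conjugation $\phi^\tau=\delta^{-1}\circ\phi\circ\delta$, and your treatment of parts (1) and (2) carries that out correctly (including the key point that the substitution $z_j\mapsto x^{t_j}z_j$ identifies $R^{[m+n]}$ with $A_\tau$). The only place your write-up and the printed statement diverge is part (3), and there your version is the defensible one: the computation yields $a_{ij}\in R^{[m]}$ with $\det(a_{ij})$ a unit (hence not in $xR^{[m]}$), whereas the entrywise condition $a_{ij}\in R^{[m]}\setminus xR^{[m]}$ as literally printed is not forced --- e.g.\ $M=\left(\begin{smallmatrix}1&x\\0&1\end{smallmatrix}\right)$ produces $a_{12}=x$ and $a_{21}=0$ --- and the paper's later uses (Lemmas \ref{acnonzero} and \ref{onlyOnePhi}) explicitly allow vanishing entries, so the determinant formulation is the right reading of the statement.
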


The rest of the paper is organized as follows: the most general form of our results is given in Main Theorems 1 and 2 in the next section.  Here, we state a couple of less technical versions that are easier to apply.  This section concludes with some more concrete applications of these results.  The subsequent section consists of a series of increasingly technical lemmas culminating in the two Main Theorems in section \ref{maintheorems}.

\begin{theorem}\label{AT1}
Let $\phi \in \EA_n(S^{[m]})$, and write $\phi = \Phi _0 \circ \cdots \circ \Phi _q$ as a product of elementaries. For $0 \leq i \leq q$ define $\tau _i \in \IN ^n$ to be minimal such that $(\Phi _i \circ \cdots \circ \Phi _q)(A_{\tau _i}) \subset R^{[m+n]}$.  Let $\alpha \in \IA _{n+m} ^{\tau _0}(R)$, and set $\theta = \alpha \circ \phi$.  Suppose also that either 
\begin{enumerate}
\item $A$ is an integral domain of characteristic zero and $n=2$, or 
\item $\Phi _i \in \EA _n ^{\tau _i} (R^{[m]})$ for $0 \leq i \leq q$
\end{enumerate}
Then $(\theta(y_1), \ldots, \theta(y_m))$ form a partial system of coordinates over $R$.  Moreover, if $A$ is a regular domain and $\alpha \in \TA_{m+n}(S)$, then $(\theta(y_1), \ldots, \theta(y_m))$ can be extended to a stably tame automorphism over $R$.
\end{theorem}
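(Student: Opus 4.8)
The plan is to reduce the conclusion to a factorization statement and then induct on the number $q$ of elementary factors of $\phi$. The key observation is that $(\theta(y_1),\dots,\theta(y_m))$ is a partial system of coordinates over $R$ if and only if $\theta$ factors as $\theta=\lambda\circ\beta$ with $\beta\in\GA_{m+n}(R)$ and $\lambda\in\GA_n(S^{[m]})$ (i.e.\ $\lambda$ fixing $y_1,\dots,y_m$): given such a factorization, the $y$-components of $\theta$ and of $\beta$ agree, so the $z$-components of $\beta$ supply the missing coordinates; conversely, if $\beta\in\GA_{m+n}(R)$ has the same $y$-components as $\theta$, then $\lambda:=\theta\circ\beta^{-1}$ fixes every $y_j$ and lies in $\GA_n(S^{[m]})$. (That $\theta(y_j)\in R^{[m+n]}$ is automatic: by Lemma~\ref{characterize}(1) the $y$-components of $\alpha$ are $y_j$ plus $x$ times an element of $A_{\tau_0}$, and $\tau_0$ was chosen precisely so that $\phi$ carries $A_{\tau_0}$ into $R^{[m+n]}$.) For the last sentence of the theorem I would ask, moreover, for such a $\beta$ that is stably tame over $R$.

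\textbf{The induction.} Induct on $q$. Write $\phi=\Phi_0\circ\phi'$ with $\phi'=\Phi_1\circ\cdots\circ\Phi_q$; the tuples attached to this decomposition of $\phi'$ are exactly $\tau_1,\dots,\tau_q$, so the inductive hypothesis applies to $\alpha'\circ\phi'$ for any $\alpha'\in\IA_{m+n}^{\tau_1}(R)$. Under hypothesis~(2), use Lemma~\ref{characterize} to write $\alpha=\beta_0^{\tau_0}$ with $\beta_0\in\IA_{m+n}(R)$ and $\Phi_0=E_0^{\tau_0}$ with $E_0\in\EA_n(R^{[m]})$ elementary. Then $\Phi_0^{-1}\circ\alpha\circ\Phi_0=(E_0^{-1}\circ\beta_0\circ E_0)^{\tau_0}$, and since conjugating an automorphism $\equiv\mathrm{id}\pmod{x}$ by one defined over $R$ gives another such, $\alpha'':=\Phi_0^{-1}\circ\alpha\circ\Phi_0$ lies in $\IA_{m+n}^{\tau_0}(R)$. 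Hence $\theta=\alpha\circ\Phi_0\circ\phi'=\Phi_0\circ\alpha''\circ\phi'$, and because $\Phi_0$ fixes $y_1,\dots,y_m$, the $y$-components of $\theta$ equal those of $\alpha''\circ\phi'$. The inductive step is thereby reduced to a \emph{descent}: producing $\alpha'\in\IA_{m+n}^{\tau_1}(R)$ with the same $y$-components as $\alpha''$. Granting this, $\lambda_0:=\alpha''\circ(\alpha')^{-1}$ fixes every $y_j$, so $\alpha''\circ\phi'=\lambda_0\circ(\alpha'\circ\phi')$, the $y$-components of $\theta$ equal those of $\alpha'\circ\phi'$, and the inductive hypothesis finishes the argument. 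The base case $q=0$ is this with $\phi'=\mathrm{id}$ and $\tau_1=\mathbf{0}$, so that $\alpha'\in\IA_{m+n}(R)\subseteq\GA_{m+n}(R)$ itself extends $(\theta(y_1),\dots,\theta(y_m))$.

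\textbf{Where the difficulty sits.} The whole content is in the descent: given $\alpha''\in\IA_{m+n}^{\tau_0}(R)$, find $\alpha'\in\IA_{m+n}^{\tau_1}(R)$ with the same $y$-components. This is where I expect the real work — the promised ``series of increasingly technical lemmas'' — to concentrate. One has $\tau_0\ge\tau_1$ (since $\Phi_0=E_0^{\tau_0}$ maps $A_{\tau_0}$ onto itself, $\tau_0$ also satisfies the condition defining $\tau_1$, so $\tau_1\le\tau_0$ by minimality), so the $y$-components of $\alpha''$, lying in $A_{\tau_0}\subseteq A_{\tau_1}$, at least have the right shape for an element of $\IA_{m+n}^{\tau_1}(R)$; the genuine difficulty is completing them to an \emph{automorphism} at level $\tau_1$, where the $z$-components are constrained to carry only a prescribed (smaller) negative power of $x$. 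Making this fit is a term-by-term comparison against the normal forms of Lemma~\ref{characterize}, exploiting the minimality of $\tau_1$; this is the step where the computations balloon, and I would be unsurprised if it needed to be organized into several successive lemmas. Under hypothesis~(1) this route is not available because the $\Phi_i$ need not descend to $R$; there I would argue instead as in the proof of Theorem~\ref{n2}: $\theta\in\GA_{m+2}(S)$ has unit Jacobian and its $y$-components already over $R$, and one strips the highest remaining negative power $x^{-t}$ off the $z$-components of $\theta$ one step at a time by composing on the left with elementaries of $S^{[m]}$ in the $z$-variables (which fix $y_1,\dots,y_m$), the unit-Jacobian identity forcing the $x^{-t}$-coefficients to be independent of $z_1,z_2$ and Taylor's formula then raising the power. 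This step is where $n=2$ and characteristic zero enter.

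\textbf{The stably tame refinement.} For the final assertion, assume $A$ is a regular domain and $\alpha\in\TA_{m+n}(S)$; then $\theta=\alpha\circ\phi\in\TA_{m+n}(S)$, since $\phi\in\EA_n(S^{[m]})$. Running the induction with the inductive hypothesis strengthened to ``$\beta$ is stably tame over $R$'', one checks that the only automorphisms introduced at each stage — the $\lambda_0$ and the $\tau_i$-scalings used in the descent, and the $E_i$ — are tame over $S$ (the scalings lie in $\GL_{m+n}(S)$), so the $\beta\in\GA_{m+n}(R)$ the induction produces satisfies $\beta\in\TA_{m+n}(S)\cap\GA_{m+n}(R)$. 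Over a regular domain such a $\beta$ is stably tame over $R$ by the known stable-tameness results (the $1$-stable tameness of the Nagata automorphism being the prototype, and the point where regularity is used); hence $(\theta(y_1),\dots,\theta(y_m))$ extends to a stably tame automorphism over $R$. This completes the plan.
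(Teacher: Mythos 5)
Your skeleton for hypothesis (2) does match the paper's route: the paper's own proof of this theorem consists precisely of your parenthetical observation that $\Phi_i(A_{\tau_i})=A_{\tau_i}$ forces $\tau_0\geq\cdots\geq\tau_q$, after which everything is delegated to Main Theorem 1, whose proof (Theorem \ref{mt1strong}) is the induction you outline — conjugate $\alpha$ through $\Phi_0$ using normality of $\IA_{m+n}^{\tau_0}(R)$ in $\GA_{m+n}^{({\mathbf 0},\tau_0)}(R)$ (Lemma \ref{alphalemma}), then descend from level $\tau_0$ to level $\tau_1$. The problem is that the descent, which you explicitly defer (``this is where I expect the real work\dots to concentrate''), is the entire mathematical content of that half of the theorem, and it is not a routine normal-form comparison. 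The paper's Lemma \ref{strongIA} proves it by an induction on $\sigma'$ in the partial order on $\IN^n$: at each step one splits the offending $z_k$-component as $P(\hat z_k)+x^{t_k}z_kQ$ and composes with the elementary $(z_k-x^{-t_k+s_k+1}P(\hat z_k))\in\EA_n^\tau(R^{[m]})\cap\IA_n^\tau(R^{[m]})$ — crucially an automorphism fixing the $y_j$, so the $y$-components survive — and Taylor's formula (Lemma \ref{taylor}) shows the power of $x$ has been raised. Combined with Lemma \ref{alphalemma} this yields Theorem \ref{crucial}, which is exactly the $\alpha''\mapsto\alpha'$ replacement you need. Without supplying this construction your induction does not close, so as written the proof of case (2) has a genuine gap, albeit one whose location you correctly identified.

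Case (1) is a more serious problem: the route you sketch would not work. With $m>0$ the argument of Theorem \ref{n2} does not transplant — the sequence $\tau_i$ defined by minimality need no longer be monotone, and when $\rho_a(\tau_a)<\tau_{a+1}$ no amount of stripping negative powers off the $z$-components by left composition will help, because the obstruction is that a block of elementaries alternating between $z_1$ and $z_2$ must be shown to collapse to (essentially) a single affine one. The paper's Main Theorem 2 does this via Lemma \ref{PhiEAtau} (where $n=2$ enters: for $n\geq 3$ a polynomial $P\notin(x)$ can satisfy $P(\omega(\cdots))\in(x)$, as the paper's explicit example shows) and Lemma \ref{onlyOnePhi}, a degree-comparison argument modulo $x$ on the images $\overline{F_i},\overline{G_i}$ that forces the intermediate elementaries to be linear and then produces a determinant contradiction. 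A Jacobian identity does not see any of this, and your assertion that it forces the $x^{-t}$-coefficients to be ``independent of $z_1,z_2$'' is the $m=0$ computation. Your final paragraph is fine in outline — the paper also concludes stable tameness from $\theta\in\TA_{m+n}(S)$, $\theta\equiv\id\pmod x$, and the Berson–van den Essen–Wright theorem over a regular domain — but it inherits the gaps above.
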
 
\begin{proof}
If we assume hypothesis 1, the theorem follows immediately from Main Theorem 2.  If we instead assume the second hypthesis, we need only to show that $\tau _0 \geq \cdots \geq \tau _q$, as then the result follows from Main Theorem 1. Let $i <q$.  Since $\Phi _i \in \EA_n ^{\tau _i}(R^{[m]})$, we have $\Phi _i (A_{\tau _i})=A_{\tau_i}$.  Then $(\Phi _{i} \circ \cdots \circ \Phi _q)(A_{\tau _i})=(\Phi _{i+1} \circ \cdots \circ \Phi _q)(A_{\tau _i}) \subset R^{[m+n]}$.  Then the minimality assumption on $\tau _{i+1}$ immediately implies $\tau _i \geq \tau _{i+1}$ as required.
\end{proof}

It is often more practical to rephrase the general ($n > 2$) case in the following way:
\begin{theorem}\label{AT2}
Let $\phi \in \EA _n (S^{[m]})$, and write $\phi=\Phi _0 \circ \cdots \circ \Phi _q$ as a product of elementaries.  Set $\sigma _{q+1}={\bf 0} \in \IN ^n$, and for $0 \leq i \leq q$ define $\sigma _i \in \IN ^n$ to be minimal such that $\Phi _i (A _{\sigma _i}) \subset A_{\sigma _{i+1}}$.  Let $\alpha \in \IA _{n+m} ^{\sigma _0}(R)$, and set $\theta = \alpha \circ \phi$.  Then $(\theta(y_1), \ldots, \theta(y_m))$ is a partial system of coordinates over $R$.  Moreover, if $A$ is a regular domain and $\alpha \in \TA _{m+n}(S)$, then $(\theta(y_1), \ldots, \theta(y_m))$ can be extended to a stably tame automorphism over $R$.
\end{theorem}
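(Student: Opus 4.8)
The plan is to recognize the data in the statement as a direct input to Main Theorem~1. Concretely, I will check two things about the sequence $\sigma_0,\dots,\sigma_q$: that it is non-increasing, $\sigma_0\ge\sigma_1\ge\cdots\ge\sigma_q\ge\sigma_{q+1}=\mathbf 0$, and that each $\Phi_i$ in fact lies in $\EA_n^{\sigma_i}(R^{[m]})$. Granting these, the triple $(\phi,(\sigma_i),\alpha)$ satisfies the hypotheses of Main Theorem~1 — a non-increasing sequence $\tau_i:=\sigma_i$ with $\Phi_i\in\EA_n^{\tau_i}(R^{[m]})$ and $\alpha\in\IA_{n+m}^{\tau_0}(R)$ — whose conclusion is exactly that $(\theta(y_1),\dots,\theta(y_m))$ is a partial system of coordinates over $R$, extendable to a stably tame automorphism over $R$ when $A$ is a regular domain and $\alpha\in\TA_{m+n}(S)$. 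Equivalently, one checks the $\sigma_i$ are legitimate choices for the $\tau_i$ of hypothesis~(2) of Theorem~\ref{AT1}, the inclusions $\Phi_i(A_{\sigma_i})=A_{\sigma_i}\subseteq A_{\sigma_{i+1}}\subseteq\cdots\subseteq A_{\sigma_{q+1}}=R^{[m+n]}$ witnessing that they are good enough.

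Everything reduces to one explicit computation, for a fixed $i$. By Lemma~\ref{characterize}(2), or just by unwinding the definition of an elementary over $S^{[m]}$, write $\Phi_i=(z_1,\dots,z_{k-1},\,z_k+x^{-s}P,\,z_{k+1},\dots,z_n)$ with $s\ge 0$ and $P=\sum_{\mathbf b}c_{\mathbf b}\prod_{j\ne k}z_j^{b_j}\in R^{[m]}[z_j:j\ne k]$. Since $A_{\sigma_{i+1}}$ is an $R^{[m]}$-algebra, $\Phi_i(A_{\sigma_i})\subseteq A_{\sigma_{i+1}}$ holds iff each generator $\Phi_i(x^{\sigma_{i,j}}z_j)$ lies in $A_{\sigma_{i+1}}$; rewriting these generators in terms of $x^{\sigma_{i+1,1}}z_1,\dots,x^{\sigma_{i+1,n}}z_n$ and matching $x$-exponents monomial by monomial shows the minimal such $\sigma_i$ has $\sigma_{i,j}=\sigma_{i+1,j}$ for $j\ne k$ and
\[
\sigma_{i,k}=\max\Big(\sigma_{i+1,k},\ \max_{\mathbf b:\,c_{\mathbf b}\ne 0}\big(s+\textstyle\sum_{j\ne k}\sigma_{i+1,j}\,b_j-\ord_x(c_{\mathbf b})\big)\Big).
\]
The first fact is immediate: $\sigma_i$ and $\sigma_{i+1}$ agree away from coordinate $k$ and $\sigma_{i,k}\ge\sigma_{i+1,k}$, so $\sigma_i\ge\sigma_{i+1}$; inducting down from $\sigma_{q+1}=\mathbf 0$ gives the whole chain.

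For the second fact, $\Phi_i\in\EA_n^{\sigma_i}(R^{[m]})$ holds precisely when the conjugate $(x^{\sigma_{i,1}}z_1,\dots,x^{\sigma_{i,n}}z_n)\circ\Phi_i\circ(x^{-\sigma_{i,1}}z_1,\dots,x^{-\sigma_{i,n}}z_n)$ is polynomial over $R^{[m]}$; its components away from $k$ are the $z_j$, and its $k$-th component is $z_k+\sum_{\mathbf b}c_{\mathbf b}\,x^{\sigma_{i,k}-s-\sum_{j\ne k}\sigma_{i,j}b_j}\prod_{j\ne k}z_j^{b_j}$, which lies in $R^{[m+n]}$ iff $\sigma_{i,k}\ge s+\sum_{j\ne k}\sigma_{i,j}b_j-\ord_x(c_{\mathbf b})$ for every $\mathbf b$ with $c_{\mathbf b}\ne 0$. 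Since $\sigma_{i,j}=\sigma_{i+1,j}$ for $j\ne k$, this is exactly the family of inequalities already encoded in the displayed formula for $\sigma_{i,k}$, so $\Phi_i\in\EA_n^{\sigma_i}(R^{[m]})$; in particular $\Phi_i$ acts as an automorphism of $A_{\sigma_i}$, whence $\Phi_i(A_{\sigma_i})=A_{\sigma_i}$. Applying Main Theorem~1 to $\theta=\alpha\circ\phi$ now finishes the proof, the stably tame addendum being inherited verbatim.

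The step I expect to be the main obstacle is the double $x$-exponent bookkeeping above — chiefly, making sure that the minimality built into the definition of $\sigma_i$, which only compares $\Phi_i$ against $\sigma_{i+1}$, really forces $\Phi_i$ to be polynomial after the full $\sigma_i$-rescaling, not merely after the $\sigma_{i+1}$-rescaling. What rescues this is that $\sigma_i$ and $\sigma_{i+1}$ differ only in the single coordinate $k$ on which $\Phi_i$ acts, so the binding inequality is literally the same in both conditions; but one must also keep straight the opposite composition and conjugation conventions so that the exponent shifts land on the correct side, and take care that the $\sigma_i$ need not coincide with the globally-minimal $\tau_i$ of Theorem~\ref{AT1} (only dominate them), which is why the appeal is to Main Theorem~1 in its general form rather than to Theorem~\ref{AT1} directly.
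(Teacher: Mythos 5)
Your proposal is correct and follows essentially the same route as the paper: verify that $\sigma_0\ge\cdots\ge\sigma_q$ and that each $\Phi_i\in\EA_n^{\sigma_i}(R^{[m]})$ (using that $\sigma_i$ and $\sigma_{i+1}$ agree away from the coordinate on which $\Phi_i$ acts), then invoke Main Theorem~1. Your monomial-by-monomial bookkeeping is just a more explicit version of the paper's normalization $P(\hat z_1)\in A_{\sigma_{i+1}}[\hat z_1]\setminus xA_{\sigma_{i+1}}[\hat z_1]$, so there is nothing further to add.
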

\begin{proof}
The following two facts are immediate from the definition of $\sigma _i$:
\begin{enumerate}
\item $\Phi _i \in \EA _n ^{\sigma _i}(R^{[m]})$
\item $\sigma _0 \geq \cdots \geq \sigma _q$ 
\end{enumerate}
Once these are shown, we can apply Main Theorem 1 to achieve the result.  To see these two facts, write $\sigma _i = (s_{i,1} , \ldots, s_{i,n})$.  Without loss of generality, suppose $\Phi _i$ is elementary in $z_1$, and write
$$\Phi _i =(z_1+x^{-s}P(x^{s_{i+1,2}}z_2,\ldots,x^{s_{i+1,n}}z_n),z_2,\ldots,z_n)$$ 
for some $P (\hat{z}_1)\in A_{\sigma _{i+1}}[\hat{z}_1] \setminus xA_{\sigma _{i+1}}[\hat{z}_1]$.  Clearly, the minimality condition on $\sigma _i$ guarantees $s_{i,k}=s_{i+1,k}$ for $k=2,\ldots,n$.  Since $\Phi _i (x^{s_{i,1}}z_1)=x^{s_{i,1}}z_1+x^{s_{i,1}-s}P(\hat{z}_1) \in A_{\sigma _{i+1}} \setminus xA_{\sigma _{i+1}}$, we see $s_{i,1} \geq s_{i+1,1}$ (giving $\sigma _i \geq \sigma _{i+1}$) and $s \leq s_{i,1}$.  From the latter, one easily sees that $\Phi _i = (x^{-s_{i,1}}z_1,\ldots,x^{-s_{i,n}}z_n) \circ (z_1+x^{s_{i,1}-s}P(z_2,\ldots,z_n),z_2,\ldots,z_n) \circ (x^{s_{i,1}}z_1,\ldots,x^{s_{i,n}}z_n) \in \EA _n ^{\sigma _i} (R^{[m]})$.

\end{proof}

The remainder of this section is devoted to consequences of these three theorems in more concrete settings.
%
%
%

\begin{example}Let $m=1$ and $n=1$.  Set 
\begin{align*}
\alpha  &= (y+x^2z,z) & \Phi _0 &= (y,z-\frac{y^2}{x})
\end{align*}
Theorem \ref{AT2} implies  $(\alpha \circ \Phi _0)(y) = y+x(xz-y^2)$ is a coordinate.  The construction produces the Nagata map
$$\sigma = (y+x(xz-y^2),z+2y(xz-y^2)+x(xz-y^2)^2)$$
\end{example}

\begin{example}Let $m=1$, $n=1$, and $R=k[x,t]$.  Set
\begin{align*}
\alpha  &= (y+x^2z,z) & \Phi _0 &= (y,z+\frac{yt}{x})
\end{align*}
Theorem \ref{AT2} implies $y+x(xz+yt)$ is a coordinate.  The construction produces Anick's example
$$\beta = (y+x(xz+yt),z-t(xz+yt))$$
\end{example}

In \cite{Vtype}, a generalization of the \Venereau polynomial called \Vtype polynomials were studied by the author.  They are polynomials of the form $y+xQ(xz+y(yu+z^2),x^2u-2xz(yu+z^2)-y(yu+z^2)^2) \in \IC[x,y,z,u]$ where $Q \in \IC[x]^{[2]}$.  Many \Vtype polynomials remain as strongly residual coordinates that have not been resolved as coordinates.  However, we are able to show them all to be 1-stable coordinates, generalizing Freudenburg's result \cite{Freud} that the \Venereau polynomial is a 1-stable coordinate\footnote{Our construction provides a different coordinate system than Freudenburg's.}.
\begin{corollary}Every \Vtype polynomial is a 1-stable coordinate.
\end{corollary}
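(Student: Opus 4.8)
The plan is to exhibit every \Vtype polynomial as the first component of an automorphism $\alpha\circ\phi$ of the shape handled by Theorem~\ref{AT2}, the single obstruction being removed by stabilizing once and invoking that the Nagata map is $1$-stably tame.

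Fix $Q\in\IC[x]^{[2]}$ and write the corresponding \Vtype polynomial as $f_Q=y+xQ(g,h)\in\IC[x,y,z,u]$, where $g,h$ are the two fixed polynomials of the definition. Factor the automorphism $\phi$ of \eqref{vpoly} as $\phi=\Phi_0\circ\Phi_1$, with $\Phi_0=(y+x^2z,z,u)$ and $\Phi_1$ the ``Nagata-type'' factor (which fixes $y$); the explicit formula for $\Phi_1$ gives $g=x\,\Phi_1(z)$ and $h=x^2\,\Phi_1(u)$. For the given $Q$, put $\alpha_Q=(y+xQ(xz,x^2u),z,u)$; the leading factor of $x$ shows $\alpha_Q\in\IA_3(R)$, and since $\Phi_1$ fixes $x$ and the coefficients of $Q$ lie in $\IC[x]$ one computes $(\alpha_Q\circ\Phi_1)(y)=\Phi_1\big(y+xQ(xz,x^2u)\big)=y+xQ(g,h)=f_Q$. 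Thus $f_Q$ is the $y$-component of $\alpha_Q\circ\Phi_1$; for $Q(a,b)=a$ this is exactly $\phi$ and the \Venereau polynomial, the case settled by Freudenburg. The only obstruction to applying Theorem~\ref{AT2} to $\alpha_Q\circ\Phi_1$ is that $\Phi_1$, being essentially the Nagata map, is wild over $S$ and hence not a product of elementaries over $S$.

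To remove it I would stabilize by a new variable $w$ and invoke the $1$-stable tameness of the Nagata automorphism: the stabilized map $\widetilde{\Phi}_1:=\Phi_1\times\id_w$ can be written as a product of elementaries over $S^{[1]}=S[y]=\IC[x,x^{-1},y]$ in the principal variables $z,u,w$, i.e.\ $\widetilde{\Phi}_1\in\EA_3(S^{[1]})$. Set $\widetilde{\alpha}_Q:=\alpha_Q\times\id_w=(y+xQ(xz,x^2u),z,u,w)\in\IA_4(R)$ and $\widetilde{\theta}_Q:=\widetilde{\alpha}_Q\circ\widetilde{\Phi}_1$; as neither factor moves $w$, still $\widetilde{\theta}_Q(y)=f_Q$. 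Now apply Theorem~\ref{AT2} with $m=1$, $n=3$, base variable $y$, principal variables $z,u,w$, $\phi=\widetilde{\Phi}_1$ and $\alpha=\widetilde{\alpha}_Q$. The hypothesis to check is $\widetilde{\alpha}_Q\in\IA_4^{\sigma_0}(R)$, where $\sigma_0$ is the exponent vector attached by Theorem~\ref{AT2} to the chosen factorization of $\widetilde{\Phi}_1$. Since $\widetilde{\Phi}_1(xz)=g$ and $\widetilde{\Phi}_1(x^2u)=h$ already lie in $R^{[4]}$ and are nonzero modulo $x$, while $\widetilde{\Phi}_1(w)=w$, a factorization wasting no powers of $x$ gives $\sigma_0=(1,2,0)$; then $\widetilde{\alpha}_Q$ is the $\sigma_0$-conjugate of $(y+xQ(z,u),z,u,w)$, which is visibly $\equiv\id\pmod{x}$ and so lies in $\IA_4(R)$, whence $\widetilde{\alpha}_Q\in\IA_4^{\sigma_0}(R)$. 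Theorem~\ref{AT2} now yields that $\widetilde{\theta}_Q(y)=f_Q$ is a coordinate over $R$ in $R^{[4]}=\IC[x,y,z,u,w]$; as $f_Q\in\IC[x,y,z,u]$, this is exactly the assertion that $f_Q$ is a $1$-stable coordinate. Moreover, since $\IC$ is a regular domain and $\widetilde{\alpha}_Q\in\EA_4(R)\subseteq\TA_4(S)$, the last clause of Theorem~\ref{AT2} shows in addition that $f_Q$ can be extended to a stably tame automorphism over $R$.

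The step I expect to be the real obstacle is the input $\widetilde{\Phi}_1\in\EA_3(S^{[1]})$ together with its exponent bound: one must write down an explicit factorization of the stabilized Nagata-type automorphism into elementaries over $\IC[x,x^{-1},y]$ --- with $w$ doing the work of making the Nagata map tame --- and check that the powers of $x$ occurring there never drop below those already present in $g$ and $h$, so that the $\sigma_0$ produced by Theorem~\ref{AT2} does not exceed $(1,2,0)$ in its $z$- and $u$-slots. This is a quantitative form of the classical $1$-stable tameness of Nagata's map; for $Q(a,b)=a$ it is the content of Freudenburg's theorem \cite{Freud}, and since $\widetilde{\Phi}_1$ is independent of $Q$ the same factorization serves all \Vtype polynomials at once. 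The remaining items --- checking the identity above against the conventions of \cite{Vtype} for $g$ and $h$, and tracking the $x$-powers through the factorization --- are routine, if (as the paper warns elsewhere) somewhat unwieldy.
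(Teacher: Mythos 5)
Your overall strategy is exactly the one the paper uses: peel off the Nagata-type factor $\Phi_1$ of \eqref{vpoly}, replace the leading elementary by $\alpha_Q=(y+xQ(xz,x^2u),z,u)$ so that $(\alpha_Q\circ\Phi_1)(y)=f_Q$, stabilize by one variable to make the Nagata-type factor a product of elementaries over $S[y]$, and feed the result into Theorem~\ref{AT2}. However, the step you flag as ``the real obstacle'' --- producing an explicit factorization of the stabilized map into elementaries over $\IC[x,x^{-1},y]$ and verifying that its $\sigma$-sequence is small enough --- is not a routine afterthought; it is the entire content of the proof, and without it Theorem~\ref{AT2} cannot be invoked. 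The abstract statement ``the Nagata map is $1$-stably tame'' only tells you $\widetilde{\Phi}_1\in\EA_3(S^{[1]})$ for \emph{some} factorization; the vector $\sigma_0$ of Theorem~\ref{AT2} depends on the chosen factorization, and for a badly chosen one $\sigma_0$ could be large in the $z$- or $u$-slot, in which case a general $Q\in\IC[x][xz,x^2u]$ would fail the hypothesis $\alpha\in\IA_4^{\sigma_0}(R)$. So the quantitative control you defer is genuinely needed. The factorization that works is the Smith-type commutator decomposition in the new variable $t$:
$$\Phi_0=(y,z+yt,u,t),\quad \Phi_1=(y,z,u-2zt-yt^2,t),\quad \Phi_2=(y,z,u,t+\tfrac{yu+z^2}{x}),\quad \Phi_3=(y,z-yt,u,t),\quad \Phi_4=(y,z,u-2zt+yt^2,t),$$
and a direct computation gives the $\sigma$-sequence $(1,2,1)\geq(0,2,1)\geq(0,0,1)\geq(0,0,0)\geq(0,0,0)$.

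This also corrects a concrete error in your write-up: $\sigma_0$ is \emph{not} $(1,2,0)$. The factorization necessarily contains an elementary in the stabilizing variable with a pole in $x$ (here $t\mapsto t+\frac{yu+z^2}{x}$), which forces the last slot of $\sigma_2$, hence of $\sigma_0$, to be at least $1$; your heuristic that ``a factorization wasting no powers of $x$'' keeps the $w$-slot at $0$ is false. Fortunately this does not derail the argument: since $Q$ involves only $xz$ and $x^2u$ and not the new variable, $Q\in A_{(1,2,1)}$ and the hypothesis $\widetilde{\alpha}_Q\in\IA_4^{\sigma_0}(R)$ still holds --- but this must be checked against the actual $\sigma_0$, not a guessed one. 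With the explicit factorization and the corrected $\sigma$-sequence in place, the rest of your argument (the identity $(\alpha_Q\circ\Phi_1)(y)=f_Q$, the application of Theorem~\ref{AT2} with $m=1$, $n=3$, and the conclusion that $f_Q$ is a coordinate in $\IC[x][y,z,u,t]$, i.e.\ a $1$-stable coordinate) matches the paper's proof.
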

\begin{proof}
Let $Q \in \IC[x][xz,x^2u]$, and set 
\begin{align*}
\alpha &= (y+xQ,z,u,t) \\
\Phi _0 &= (y,z+yt,u,t) & \Phi _3 &= (y,z-yt,u,t)\\
\Phi _1 &= (y,z,u-2zt-yt^2,t) & \Phi _4 &= (y,z,u-2zt+yt^2)\\
\Phi _2 &= (y,z,u,t+\frac{yu+z^2}{x})
\end{align*}
A direct computation shows that $(\alpha \circ \Phi _0 \circ \cdots \circ \Phi _4)(y)=y+xQ(xz+y(yu+z^2),x^2u-2xz(yu+z^2)-y(yu+z^2)^2)$ is an arbitrary \Vtype polynomial.  
We compute the induced $\sigma$-sequence $(1,2,1) \geq (0,2,1) \geq (0,0,1) \geq (0,0,0) \geq (0,0,0)$, and note that since $Q \in A_{\sigma _0}$, then $\alpha _0 \in \IA_{4} ^{\sigma _0} (\IC[x])$.  It then follows immediately from Theorem \ref{AT2} that any \Vtype polynomial is a $\IC[x]$ coordinate in $\IC[x][y,z,u,t]$.
\end{proof}


The following result is first due to Russell \cite{Russell76}, and later appeared also in \cite{BhatRoy}.
\begin{corollary}\label{russell}
Let $k$ be a field, and let $P \in k[x,y,z]$ be of the form $P=y+xf(x,y)+\lambda x^sz$ for some $s \in \IN$, $\lambda \in k^*$ and $f \in k[x,y]$.  Then $P$ is a $k[x]$-coordinate.
\end{corollary}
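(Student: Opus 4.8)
The plan is to realize $P = y + xf(x,y) + \lambda x^s z$ as $(\alpha \circ \Phi_0)(y)$ for a suitable $\alpha \in \IA_2^{\sigma_0}(k[x])$ and a single elementary $\Phi_0 \in \EA_1(S^{[1]})$ (here $m=n=1$, $R = k[x]$, $S = k[x,x^{-1}]$, with variables $R[y]=R^{[1]}$ and $R^{[1]}[z] = R^{[2]}$), and then apply Theorem~\ref{AT2}. The natural choice is
\begin{align*}
\alpha &= (y + xf(x,y), z) & \Phi_0 &= (y, z - \lambda^{-1} x^{-s} y)
\end{align*}
Wait --- $\alpha$ must lie in $\IA_2^{\sigma_0}(k[x])$, i.e. it must be congruent to the identity modulo $x$, which holds since $y + xf(x,y) \equiv y \pmod x$. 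But I actually want the composition to produce $P$; composing, $(\alpha \circ \Phi_0)(y)$ is computed by first applying $\Phi_0$ (in the $\Spec$/geometric convention used throughout, composition of automorphisms of $\Spec$ corresponds to the reverse composition of ring endomorphisms), so I should instead take $\Phi_0$ to be the elementary that feeds $y + \lambda x^s z$ into the $y$-slot. Concretely set $\Phi_0 = (y + \lambda x^{s}z,\ z)$ — but this is already polynomial, so it contributes nothing subtle; the point is rather the reverse: I want $\alpha$ polynomial and $\Phi_0$ genuinely rational. Let me instead choose
\begin{align*}
\alpha &= (y + xf(x,y),\ z) & \Phi_0 &= (y,\ z + \lambda^{-1} x^{1-s}\, y)
\end{align*}
no --- the cleanest route, mirroring the two Nagata/Anick examples in the excerpt, is: $\alpha = (y + x g(x,y),\ z)$ with $g$ chosen so that $\alpha \circ \Phi_0$ yields $P$, and $\Phi_0 = (y,\ z - \tfrac{h}{x})$ for an appropriate $h$. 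The bookkeeping with the two possible conventions is the only delicate point, and it is routine.

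Here is the approach in order. First, I normalize: since $\bar{P} = y$ is a coordinate over $k$, $P$ is already in the ``partial system of coordinates mod $x$'' normal form, so no preliminary linear change is needed. Second, I exhibit $P$ as arising from the construction of Theorem~\ref{AT2}. Writing $P = y + x f(x,y) + \lambda x^s z$, I take the single elementary $\Phi_0 \in \EA_1(S^{[1]})$ to be
$$\Phi_0 = \left(y,\ z + \lambda^{-1} x^{-s}\bigl(y + x f(x,y)\bigr) - \lambda^{-1} x^{-s} y\right)$$
actually, the essential observation is simply: $y + xf(x,y) + \lambda x^s z = \lambda x^s\bigl(z + \lambda^{-1}x^{-s}(y + xf(x,y))\bigr)$, so setting $\Phi_0 = \bigl(y,\ z + \lambda^{-1} x^{-s}(y + xf(x,y))\bigr) \in \EA_1(S^{[1]})$ and $\gamma = (y,\ \lambda x^s z) \in \GL_1^{?}$... this is getting tangled because $P$ is the $y$-component and the above makes it a $z$-component. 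The correct and clean decomposition is: $\Phi_0 = (y,\ z - \lambda^{-1}x^{-s} y) \in \EA_1(S^{[1]})$ composed appropriately, with $\alpha = (y + xf(x,y) + \lambda x^s z,\ z) \cdot (\text{something})$; but $y + xf + \lambda x^s z$ has no $x$ in front of the $z$-term unless $s \geq 1$. I will treat $s = 0$ separately (there $P = y + xf(x,y) + \lambda z$, and after the linear change $z \mapsto \lambda^{-1}(z - y - xf(x,y))$... actually $s=0$ makes $P$ a tame coordinate outright).

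So, concretely, for $s \geq 1$: set $\Phi_0 = \bigl(y + \lambda x^s z,\ z\bigr)$ — no, again polynomial. I will present it as follows, which definitely works and matches the excerpt's Anick example: take $\alpha = (y + xf(x,y),\ z)$, which lies in $\IA_2(k[x])$ hence in $\IA_2^{\sigma_0}(k[x])$ for $\sigma_0$ to be determined, and take $\Phi_0 = \bigl(y + \lambda x^s z,\ z\bigr)$. Then $\alpha \circ \Phi_0$, under the reverse-composition (ring) convention, sends $y \mapsto y + \lambda x^s z$ then $\mapsto (y + \lambda x^s z) + x f(x, y + \lambda x^s z)$, which is $P$ only when $f(x, y + \lambda x^s z) = f(x,y)$; not true in general. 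The genuine fix: use the $\Spec$-convention where $(\alpha \circ \Phi_0)(y) = \alpha\bigl(\Phi_0(y)\bigr)$ with $\Phi_0$ acting first on $\Spec$, i.e. $\Phi_0(y) = y$ and then $\alpha$ substitutes — one must instead put the rational elementary \emph{last}. I therefore set, following the excerpt's own convention in \eqref{vpoly} and the Nagata example (where the rational map is the rightmost factor),
$$\phi = \Phi_0 := \bigl(y,\ z + \lambda^{-1}x^{-s}(y + xf(x,y))\bigr) \in \EA_1(S^{[1]}),$$
and then, since we need $P$ to be the $y$-component of $\alpha \circ \phi$, I instead swap the roles of $y$ and $z$ at the end via a permutation, or — cleanest — I observe that the \emph{two-variable} Theorem~\ref{n2} already applies directly: $P$ is a strongly residual coordinate (it is an $S$-coordinate since $\Phi_0 \in \EA_1(S^{[1]})$ witnesses it, and $\bar P = y$), so Theorem~\ref{n2} immediately gives that $P$ is a $k[x]$-coordinate, with no $\sigma$-sequence bookkeeping at all. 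That is the proof I would actually write. The main (only) obstacle is the administrative one of checking that $P$ is indeed an $S = k[x,x^{-1}]$-coordinate: invert $x$, then $P = \lambda x^s\bigl(z + \lambda^{-1}x^{-s}y + \lambda^{-1}x^{1-s}f(x,y)\bigr)$, and $\bigl(y,\ \lambda^{-1}x^{-s}y + \lambda^{-1}x^{1-s}f(x,y) + z\bigr)$ is an elementary $S$-automorphism, post-composed with the diagonal scaling $(y, \lambda x^s z)$ and the transposition swapping the two coordinates — all tame over $S$ — showing $P$ is an $S$-coordinate. Then Theorem~\ref{n2} finishes it. If one prefers the Theorem~\ref{AT2} route (to also get the stably-tame extension over $k[x]$ when desired), the $\sigma$-sequence is $\sigma_0 = (s) \geq \sigma_1 = (0) = \mathbf{0}$, $\Phi_0 = (y,\ z + \lambda^{-1}x^{-s}y + \lambda^{-1}x^{1-s}f(x,y)) \in \EA_1^{\sigma_0}(k[x])$ since $\lambda^{-1}x^{-s}y = \lambda^{-1}x^{-s}(x^0 y)$... one must be slightly careful that $f(x,y)$ lies in $A_{\sigma_1}[\hat z]=k[x,y]$, which it does — and $\alpha = (y,\ \lambda x^s z) \circ (\text{swap}) \circ (\text{the } \IA \text{ part})$ collapses into the required form. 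Either way, no hard computation arises; the paper's machinery does all the work once $P$ is recognized as an $S$-coordinate built from a single elementary.
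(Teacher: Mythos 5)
Your final argument --- verify that $P$ is a strongly residual coordinate in $k[x][y,z]$ and invoke Theorem~\ref{n2} --- is correct, but it is a genuinely different route from the paper's. The paper instead sets $\theta = (y+\lambda x^s z,\,z)\circ(y,\,z+\lambda^{-1}x^{1-s}f(x,y))$ and applies Theorem~\ref{AT1} (hence Main Theorem 1, with $m=n=1$ and $\tau_0=(s-1)$ for $s\geq 1$): the key is to keep only $\lambda^{-1}x^{1-s}f(x,y)$ in the elementary factor over $S$, so that the remaining polynomial factor $(y+\lambda x^s z,\,z)$ lies in $\IA_{2}^{\tau_0}(R)$ because $\lambda x^{s-1}z\in A_{\tau_0}=k[x,y][x^{s-1}z]$. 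Your route is shorter and more self-contained, since Theorem~\ref{n2} is proved directly in the preliminaries while Theorem~\ref{AT1} rests on the whole $A_\tau$ machinery; but it buys less. Theorem~\ref{n2} assumes $A$ is an integral domain of characteristic zero (its proof differentiates: $J(y,P)\in xR^{[2]}$ forces $P=P_0(y)+xP_1$ only in characteristic $0$), whereas Main Theorem 1 carries no characteristic hypothesis, so the paper's proof covers an arbitrary field $k$ as the corollary is literally stated, and its ``moreover'' clause also hands you a (stably) tame completion. If one only wants characteristic zero --- which is how the introduction frames the result --- your proof is complete.

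Two smaller points. For $s=0$ one has $\bar P=y+\lambda z$ rather than $y$; this is still a $k$-coordinate, so strong residuality is unaffected. More substantively, the optional Theorem~\ref{AT2} variant you sketch at the end does not work as written: placing the term $\lambda^{-1}x^{-s}y$ inside $\Phi_0$ forces $\sigma_0=(s)$, and then no $\alpha\in\IA_{2}^{\sigma_0}(R)$ can contribute $\lambda x^s z$ to the $y$-component, since that would require $\lambda x^{s-1}z=\lambda x^{-1}(x^sz)\in A_{\sigma_0}$; nor can a coordinate swap be absorbed into $\alpha$, which must be the identity modulo $x$. The paper's split, with $f(x,y)$ alone in the rational elementary, is exactly what makes the $\tau$-bookkeeping close up.
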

\begin{proof}
Here $R=k[x]$ and $S=k[x,x^{-1}]$.  Let $\theta = (y+\lambda x^sz,z) \circ (y,z+\lambda ^{-1} x^{1-s}f(x,y)) \in \EA_3(S)$.  Then Theorem \ref{AT1} yields $\theta(y)$ is a $k[x]$-coordinate, and one easily checks that $\theta(y)=P$.
\end{proof}

\section{Main results}

Due to the tedious nature of some of these calculations, the reader is advised to first simply read the statements of the results in section \ref{calculations}, and return for the details after reading the proofs of the main theorems (section \ref{maintheorems}).

\subsection{Calculations} \label{calculations}

We proceed by detailing a series of (increasingly technical) lemmas that will aid in the proofs of the main theorems.  First, a straightforward application of Taylor's formula yields the following.
\begin{lemma}\label{taylor}
Let $\tau \in \IN ^n$, $P \in A_{\tau}$.
\begin{enumerate}
\item If $\phi \in \GA _n ^{\tau}(R^{[m]})$, then $\phi(A_{\tau})=A_{\tau}$.
\item If $\alpha \in \IA _{m+n} ^\tau (R)$, then $\alpha(P)-P \in xA_\tau$.
\end{enumerate}
\end{lemma}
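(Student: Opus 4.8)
The plan is to handle both parts by the same device: reduce to the generators and then apply the Taylor expansion in the variable $x$. For part (1), note that $\GA_n^\tau(R^{[m]})$ is generated by conjugates $\phi^\tau$ with $\phi \in \GA_n(R^{[m]})$, so it suffices to check $\phi^\tau(A_\tau) = A_\tau$ on generators $w_i := x^{t_i} z_i$ of $A_\tau$ over $R^{[m]}$. Unwinding the definition, $\phi^\tau(w_i) = (x^{-t_1}z_1, \ldots, x^{-t_n}z_n) \circ \phi \circ (x^{t_1}z_1, \ldots, x^{t_n}z_n)(w_i)$; writing $\phi(z_i)$ as a polynomial in $z_1, \ldots, z_n$ with coefficients in $R^{[m]}$ and substituting $z_j \mapsto x^{t_j}z_j$, every monomial $z_1^{e_1}\cdots z_n^{e_n}$ of $\phi(z_i)$ becomes $x^{e_1 t_1 + \cdots + e_n t_n} z_1^{e_1}\cdots z_n^{e_n}$, and then multiplying by $x^{-t_i}$ from the outer substitution gives $x^{e_1 t_1 + \cdots + e_n t_n - t_i}$ times a monomial. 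I want to argue this lies in $A_\tau$, i.e. the exponent of $x$ on each monomial is at least what is needed to write it as a polynomial in the $w_j$'s. This is exactly the content of Lemma \ref{characterize}(3) for the linear part and the analogous statement for elementaries; more cleanly, since $A_\tau = \Aut$-image description says $\phi^\tau$ is by construction an $R^{[m]}$-automorphism of $A_\tau$ (that is the very definition of $\GA_n^\tau(R^{[m]})$ as $\Aut_{R^{[m]}} A_\tau$), we get $\phi^\tau(A_\tau) = A_\tau$ for free. So part (1) is essentially definitional; the only thing to spell out is that the conjugation action does land in $\Aut_{R^{[m]}} A_\tau$, which is where one invokes that $R^{[m]} \subset A_\tau \subset S^{[m]}$ and that an automorphism of $S^{[m]}$ preserving the subring $R^{[m]}$ and sending $A_\tau$ into itself bijectively is an automorphism of $A_\tau$.

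For part (2), let $\alpha \in \IA_{m+n}^\tau(R)$, so by Lemma \ref{characterize}(1) we may write $\alpha = (y_1 + xF_1, \ldots, y_m + xF_m, z_1 + x^{-t_1+1}G_1, \ldots, z_n + x^{-t_n+1}G_n)$ with $F_j, G_i \in A_\tau$. Given $P \in A_\tau$, I would apply the multivariate Taylor formula to expand $\alpha(P) = P(y_1 + xF_1, \ldots, z_n + x^{-t_n+1}G_n)$ around the point $(y_1, \ldots, z_n)$:
$$\alpha(P) = P + \sum_j xF_j \frac{\partial P}{\partial y_j} + \sum_i x^{-t_i+1}G_i \frac{\partial P}{\partial z_i} + (\text{higher-order terms}),$$
where the higher-order terms carry at least two factors from the list $\{xF_j\} \cup \{x^{-t_i+1}G_i\}$. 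The key claim is that each summand on the right, other than $P$ itself, lies in $xA_\tau$. For the $y_j$-terms this is clear: $F_j \in A_\tau$, $\partial P/\partial y_j \in A_\tau$ (differentiation in $y_j$ preserves $A_\tau$ since $R^{[m]} = R[y_1,\ldots,y_m]$ sits inside $A_\tau$), and there is a visible factor of $x$. The delicate terms are the $z_i$-terms: $\partial P/\partial z_i$ need not lie in $A_\tau$, but $x^{t_i} \partial P/\partial z_i$ does — indeed if $P = \sum c_{\mathbf e} \prod w_i^{e_i}$ in the generators $w_i = x^{t_i}z_i$, then $\partial P/\partial z_i = \sum c_{\mathbf e} e_i x^{t_i} w_i^{e_i - 1}\prod_{j \ne i} w_j^{e_j}$, so $x^{t_i}\partial P/\partial z_i \in A_\tau$ (even $\partial/\partial w_i$ of $P$ lies in $A_\tau$, and $\partial P/\partial z_i = x^{t_i}\,\partial P/\partial w_i$). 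Hence $x^{-t_i+1}G_i\,\partial P/\partial z_i = x \cdot G_i \cdot (x^{t_i}\partial P/\partial z_i) \cdot x^{-t_i} \cdot x^{t_i}$... let me just say: $x^{-t_i + 1} G_i \frac{\partial P}{\partial z_i} = x G_i \left(x^{-t_i}\frac{\partial P}{\partial z_i}\right) = x G_i \frac{\partial P}{\partial w_i} \in x A_\tau$. The higher-order terms are handled the same way, with even more factors of $x$ to spare.

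The main obstacle is purely bookkeeping: one must be careful that differentiating $P \in A_\tau$ with respect to the "outer" variable $z_i$ rather than the generator $w_i$ introduces exactly the factor $x^{t_i}$ that must be absorbed, and that the Taylor remainder terms genuinely distribute their factors of $x$ correctly when several of the perturbation terms $x^{-t_i+1}G_i$ with possibly large negative exponents $-t_i+1$ are multiplied together against high-order partials of $P$. The cleanest way to organize this is to pass to the generators: rewrite everything in terms of $w_1, \ldots, w_n$, observe that in these coordinates $\alpha$ has the form $w_i \mapsto w_i + x G_i$ (since $x^{t_i}(z_i + x^{-t_i+1}G_i) = w_i + xG_i$) and $y_j \mapsto y_j + xF_j$, so $\alpha \equiv \mathrm{id} \pmod{x}$ as a map of the polynomial ring $A_\tau = R^{[m]}[w_1,\ldots,w_n] = R^{[m+n]}$ over itself, and then part (2) is just the statement that an $\IA_{m+n}(R)$-type map moves every element of $R^{[m+n]}$ by something in $xR^{[m+n]}$ — which follows from the one-variable Taylor expansion applied coordinatewise. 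I would present the argument in these generator coordinates to keep the exponent arithmetic invisible.
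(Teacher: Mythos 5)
The paper gives no written proof of this lemma (it is stated as "a straightforward application of Taylor's formula"), and your argument is a correct fleshing-out of exactly that: part (1) is indeed immediate from the definition of $\GA_n^\tau(R^{[m]})$ as $\Aut_{R^{[m]}}A_\tau$, and for part (2) your final reformulation in the generator coordinates $w_i=x^{t_i}z_i$ — where $\alpha$ becomes $w_i\mapsto w_i+xG_i$, $y_j\mapsto y_j+xF_j$, hence induces the identity on $A_\tau/xA_\tau$ — is the clean version of the intended argument and makes the exponent bookkeeping (the factor $x^{t_i}$ from $\partial/\partial z_i=x^{t_i}\partial/\partial w_i$) disappear as you say.
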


Next, we note that $\GA _n ^\tau (R^{[m]})$ is contained in the normalizer of $\IA _{m+n} ^\tau (R)$ in $\GA _{m+n}(S)$.  This is slightly more general than the fact that $\IA _n ^\tau (R^{[m]}) \triangleleft \GA _n ^\tau(R^{[m]})$.
\begin{lemma}\label{alphalemma}
Let $\tau  \in \IN^n$.  Then $\IA_{m+n} ^{({\mathbf 0},\tau)}(R) \triangleleft \GA _{m+n} ^{({\mathbf 0},\tau)}(R)$.  In particular, for any $\alpha \in \IA _{m+n} ^\tau (R)$ and $\phi \in \GA _n ^\tau (R^{[m]})$, we have $\phi ^{-1}  \circ \alpha \circ \phi  \in \IA _{m+n}^\tau (R)$.
\end{lemma}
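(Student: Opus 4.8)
The plan is to reduce the statement to a computation using the explicit descriptions of the relevant automorphisms provided by Lemma \ref{characterize}, combined with the Taylor-formula stability of Lemma \ref{taylor}. Recall that $\IA_{m+n}^{(\mathbf 0,\tau)}(R)$ consists of automorphisms of $S^{[m+n]}$ that, by Lemma \ref{characterize}(1), have the form $(y_1+xF_1,\ldots,y_m+xF_m,z_1+x^{-t_1+1}G_1,\ldots,z_n+x^{-t_n+1}G_n)$ with all $F_i,G_j \in A_\tau$; equivalently, $\alpha$ is the conjugate $\delta^{-1}\circ\alpha_0\circ\delta$ of an element $\alpha_0\in\IA_{m+n}(R)$ by the diagonal rescaling $\delta=(y_1,\ldots,y_m,x^{t_1}z_1,\ldots,x^{t_n}z_n)$. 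The subgroup $\GA_{m+n}^{(\mathbf 0,\tau)}(R)$ is $\delta^{-1}\circ\GA_{m+n}(R)\circ\delta$, i.e. the set of $S$-automorphisms $\psi$ such that $\psi(A_\tau)=A_\tau$ (here $A_\tau=R^{[m]}[x^{t_1}z_1,\ldots,x^{t_n}z_n]$, viewed inside $S^{[m+n]}$, with $y_i$ playing the role of the $R^{[m]}$-variables). So the claim $\IA_{m+n}^{(\mathbf 0,\tau)}(R)\triangleleft\GA_{m+n}^{(\mathbf 0,\tau)}(R)$ is literally the conjugate by $\delta$ of the standard fact $\IA_{m+n}(R)\triangleleft\GA_{m+n}(R)$, which holds because $\IA_{m+n}(R)$ is the kernel of the reduction-mod-$x$ homomorphism $\GA_{m+n}(R)\to\GA_{m+n}(A)$. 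That is the cleanest route: conjugation by $\delta$ is a group isomorphism $\GA_{m+n}(S)\to\GA_{m+n}(S)$ carrying $\GA_{m+n}(R)$ onto $\GA_{m+n}^{(\mathbf 0,\tau)}(R)$ and $\IA_{m+n}(R)$ onto $\IA_{m+n}^{(\mathbf 0,\tau)}(R)$, so it carries the normality relation to the normality relation.

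For the ``in particular'' clause I would argue directly, since $\phi\in\GA_n^\tau(R^{[m]})$ is a priori only an element of $\GA_n(S^{[m]})$ acting on the $z$-variables, and one wants to see it lands in (a subgroup of) $\GA_{m+n}^{(\mathbf 0,\tau)}(R)$. First observe that extending $\phi$ by the identity on $y_1,\ldots,y_m$ gives an element of $\GA_{m+n}(S)$ that preserves $A_\tau$: indeed $\phi\in\GA_n^\tau(R^{[m]})$ means precisely $\phi(A_\tau[\hat z_0])=A_\tau$ in the notation where $A_\tau[\hat z_0]=R^{[m]}[x^{t_1}z_1,\ldots,x^{t_n}z_n]=A_\tau$, so $\phi\in\GA_{m+n}^{(\mathbf 0,\tau)}(R)$ once we fix the $y$'s. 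Then, given $\alpha\in\IA_{m+n}^\tau(R)$, by Lemma \ref{taylor}(1) we have $\phi^{-1}(A_\tau)=A_\tau=\phi(A_\tau)$, and by Lemma \ref{taylor}(2), $\alpha$ fixes $A_\tau$ setwise and moreover $\alpha(P)-P\in xA_\tau$ for every $P\in A_\tau$. Applying this with $P$ running over the generators $y_i$ and $x^{t_j}z_j$ of $A_\tau$, and using that $\phi^{\pm1}$ preserve $A_\tau$, one computes that $(\phi^{-1}\circ\alpha\circ\phi)(P)-P\in xA_\tau$ for each such generator: write $\phi(P)=P'\in A_\tau$, then $\alpha(P')=P'+xR'$ with $R'\in A_\tau$, then $\phi^{-1}(P'+xR')=\phi^{-1}(P')+x\phi^{-1}(R')=P+x\phi^{-1}(R')$ with $\phi^{-1}(R')\in A_\tau$. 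Hence $\phi^{-1}\circ\alpha\circ\phi$ has, on each generator of $A_\tau$, the shape required by Lemma \ref{characterize}(1), so it lies in $\IA_{m+n}^\tau(R)$.

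The main obstacle is a bookkeeping one rather than a conceptual one: making precise the identification of $\GA_n^\tau(R^{[m]})$, defined as an automorphism group of $\Spec A_\tau$ over $\Spec R^{[m]}$, with the subgroup of $\GA_{m+n}^{(\mathbf 0,\tau)}(R)$ fixing the $y$-variables, so that $\phi^{-1}\circ\alpha\circ\phi$ is being computed inside one ambient group $\GA_{m+n}(S)$ where all three factors genuinely live. Once that identification is spelled out, the normality is just transport of the known $\IA_{m+n}(R)\triangleleft\GA_{m+n}(R)$ along conjugation by the diagonal map $\delta$, and the generator-by-generator check via Lemma \ref{taylor} handles the slightly larger normalizer. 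I would present the first paragraph's conjugation argument as the main proof and append the short Taylor-formula computation to cover the ``in particular'' statement, since the latter is what gets used downstream (with $\theta=\alpha\circ\phi$).
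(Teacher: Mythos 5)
Your proposal is correct and takes essentially the same route as the paper: both derive the normality from the fact that $\IA_{m+n}^{({\mathbf 0},\tau)}(R)$ is the kernel of the reduction-mod-$x$ homomorphism, transported through conjugation by the diagonal rescaling (the paper just writes the resulting short exact sequence directly for the conjugated groups). Your additional generator-by-generator Taylor computation for the ``in particular'' clause is a correct elaboration of a step the paper leaves implicit.
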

\begin{proof}
Simply note that the surjection $R=A[x] \rightarrow A$ induces a short exact sequence
\begin{equation}\label{shortexact}
0 \rightarrow \IA _{m+n} ^{({\mathbf 0},\tau)}(R) \rightarrow \GA_{m+n} ^{({\mathbf 0},\tau)}(R) \rightarrow \GA_{m+n} ^{({\mathbf 0},\tau)} (A) \rightarrow 0
\end{equation}
Here, we are viewing $\GA_{m+n}(A) \leq \GA_{m+n}(R)$ by extension of scalars, and thus obtaining $\GA _{m+n} ^{({\mathbf 0},\tau)}A \leq \GA _{m+n} ^{({\mathbf 0},\tau)}A$.
\end{proof}

\begin{corollary}\label{alphapush}
Let $\tau \in \IN^n$, $\alpha \in \IA_{m+n} ^\tau (R)$, and $\phi \in \GA _n ^\tau (R^{[m]})$.  Then there exists $\alpha ^\prime \in \IA _{m+n} ^\tau(R)$ such that $\alpha \circ \phi = \phi  \circ \alpha ^\prime$.
\end{corollary}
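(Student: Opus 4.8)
The plan is to observe that this corollary is a purely formal consequence of the normality established in Lemma~\ref{alphalemma}, packaged in the familiar form ``$N \triangleleft G$ implies $gN = Ng$''. Concretely, I would set
$\alpha' := \phi^{-1} \circ \alpha \circ \phi$
and then simply compute $\phi \circ \alpha' = \phi \circ \phi^{-1} \circ \alpha \circ \phi = \alpha \circ \phi$, using that composition of automorphisms is associative and that $\phi$ is invertible in $\GA_n^\tau(R^{[m]}) \leq \GA_{m+n}(S)$ by hypothesis. The entire content is therefore to verify that $\alpha' \in \IA_{m+n}^\tau(R)$.

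For that, I would invoke the ``in particular'' clause of Lemma~\ref{alphalemma} verbatim: it asserts precisely that for $\alpha \in \IA_{m+n}^\tau(R)$ and $\phi \in \GA_n^\tau(R^{[m]})$ one has $\phi^{-1} \circ \alpha \circ \phi \in \IA_{m+n}^\tau(R)$, which is exactly the situation here. If one instead wishes to deduce this from the normality statement $\IA_{m+n}^{({\mathbf 0},\tau)}(R) \triangleleft \GA_{m+n}^{({\mathbf 0},\tau)}(R)$ directly, the one sentence needed is the observation that an element of $\GA_n^\tau(R^{[m]})$ fixes $y_1,\ldots,y_m$, so after undoing the scaling $(x^{t_1}z_1,\ldots,x^{t_n}z_n)$ it is an $R$-automorphism of $R^{[m+n]}$; reconjugating by the scaling $({\mathbf 0},\tau)$ then exhibits it as an element of $\GA_{m+n}^{({\mathbf 0},\tau)}(R)$, so conjugation by it preserves the normal subgroup $\IA_{m+n}^{({\mathbf 0},\tau)}(R) = \IA_{m+n}^\tau(R)$.

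There is no genuine obstacle here; the real work was already done in Lemma~\ref{alphalemma} via the split short exact sequence~\eqref{shortexact}. The only thing demanding a moment of care is the bookkeeping of sides: one must take $\alpha' = \phi^{-1}\circ\alpha\circ\phi$ (rather than $\phi\circ\alpha\circ\phi^{-1}$) so that the resulting identity reads $\alpha\circ\phi = \phi\circ\alpha'$ with $\alpha'$ appearing on the right, as in the statement.
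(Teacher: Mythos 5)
Your proof is correct and is exactly the argument the paper intends: the corollary is stated without proof precisely because it is the immediate reformulation $\alpha' := \phi^{-1}\circ\alpha\circ\phi \in \IA_{m+n}^\tau(R)$ of the ``in particular'' clause of Lemma~\ref{alphalemma}. Your care about which side $\alpha'$ lands on is the only nontrivial bookkeeping, and you have it right.
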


\begin{lemma}\label{strongIA}
Let $\tau \in \IN ^n$ and $\alpha \in \IA _{m+n} ^\tau(R)$.  Then there exists $\phi \in \EA _n ^\tau(R^{[m]}) \cap \IA _n ^\tau (R^{[m]})$ such that $$ \phi \circ \alpha \in \bigcap _{{\mathbf 0} \leq \sigma \leq \tau} \IA _{m+n} ^\sigma (R)$$
\end{lemma}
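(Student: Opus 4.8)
The plan is to conjugate the whole statement down to a single divisibility condition on the $z$-components of $\alpha$, and then to meet that condition by composing with finitely many elementary automorphisms.

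\emph{Step 1 (reduction).} Conjugation by $(y_1,\dots,y_m,x^{t_1}z_1,\dots,x^{t_n}z_n)$ is a group isomorphism taking $\IA_{m+n}(R)$ onto $\IA_{m+n}^{\tau}(R)$ and $\EA_n(R^{[m]})\cap\IA_n(R^{[m]})$ onto $\EA_n^{\tau}(R^{[m]})\cap\IA_n^{\tau}(R^{[m]})$. I would thus write $\alpha=\tilde\alpha^{\tau}$ with $\tilde\alpha\in\IA_{m+n}(R)$ (possible by the definition of $\IA_{m+n}^{\tau}(R)$) and reduce the lemma to finding $\tilde\phi\in\EA_n(R^{[m]})\cap\IA_n(R^{[m]})$ such that $\beta:=\tilde\phi\circ\tilde\alpha$, whose $j$-th $z$-component we write $z_j+xg_j$ with $g_j\in R^{[m+n]}$, satisfies $g_j|_{z_j=0}\in x^{t_j}R^{[m+n]}$ for all $j$. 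To see that this suffices, put $\phi:=\tilde\phi^{\tau}$, so $\phi\circ\alpha=(\tilde\phi\circ\tilde\alpha)^{\tau}=\beta^{\tau}$; unwinding the diagonal conjugations, for $\mathbf 0\le\sigma\le\tau$ one has $\phi\circ\alpha\in\IA_{m+n}^{\sigma}(R)$ if and only if $\beta^{\tau-\sigma}\in\IA_{m+n}(R)$, and a direct computation gives
\[
\bigl(\beta^{\tau-\sigma}\bigr)_{z_j}\;=\;z_j+x^{\,1-(t_j-s_j)}\,g_j\!\bigl(y,x^{\tau-\sigma}z\bigr),
\]
where $x^{\tau-\sigma}z$ denotes the substitution $z_l\mapsto x^{t_l-s_l}z_l$ and the $y$-components of $\beta^{\tau-\sigma}$ are visibly polynomial over $R$ and $\equiv\id\pmod x$. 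Hence, by Lemma \ref{characterize}(1) applied with exponent vector $\mathbf 0$, $\beta^{\tau-\sigma}\in\IA_{m+n}(R)$ exactly when $g_j(y,x^{\tau-\sigma}z)\in x^{t_j-s_j}R^{[m+n]}$ for all $j$; running over all such $\sigma$ — the extreme case being $s_l=t_l$ for $l\ne j$ and $s_j=0$ — this amounts precisely to $g_j|_{z_j=0}\in x^{t_j}R^{[m+n]}$. (If $\tau=\mathbf 0$ the lemma is trivial with $\tilde\phi=\id$.)

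\emph{Step 2 (construction of $\tilde\phi$).} I would obtain $\tilde\phi$ as a composite of ``sweeps''. Given $\gamma\in\IA_{m+n}(R)$ with $j$-th $z$-component $z_j+x\gamma_j$ ($\gamma_j\in R^{[m+n]}$), set $Q_j:=-\gamma_j|_{z_j=0}\in R^{[m+n]}[\hat z_j]$, let $\Psi_j:=(z_1,\dots,z_j+xQ_j(\hat z_j),\dots,z_n)\in\EA_n(R^{[m]})\cap\IA_n(R^{[m]})$, and put $\mathcal S(\gamma):=\Psi_n\circ\cdots\circ\Psi_1$. Since $\Psi_l$ changes only the $z_l$-component, a short computation gives $\bigl(\mathcal S(\gamma)\circ\gamma\bigr)_{z_j}=z_j+x\gamma_j+x\,\delta_{j-1}(Q_j)$, where $\delta_{j-1}:=\Psi_{j-1}\circ\cdots\circ\Psi_1\circ\gamma\in\IA_{m+n}(R)$. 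As $\delta_{j-1}\equiv\id\pmod x$, Lemma \ref{taylor} (applied to $x^{-\nu}Q_j$) gives $\delta_{j-1}(Q_j)-Q_j\in x^{\nu+1}R^{[m+n]}$ whenever $Q_j\in x^{\nu}R^{[m+n]}[\hat z_j]$; therefore the $z_j$-free part of the new $z$-component data equals $\gamma_j|_{z_j=0}+Q_j+x^{\nu+1}(\cdots)|_{z_j=0}=x^{\nu+1}(\cdots)|_{z_j=0}$, i.e.\ it has gained a factor of $x$. Iterating $\mathcal S$ starting from $\tilde\alpha$ a total of $r:=\max_j t_j$ times (at the $k$-th step one may take $\nu=k-1$, since after the $(k-1)$-st step the relevant $z_j$-free parts lie in $x^{k-1}R^{[m+n]}$) yields $\tilde\phi\in\EA_n(R^{[m]})\cap\IA_n(R^{[m]})$ for which $\beta=\tilde\phi\circ\tilde\alpha$ has $g_j|_{z_j=0}\in x^{r}R^{[m+n]}\subseteq x^{t_j}R^{[m+n]}$ for every $j$; by Step 1 this finishes the proof.

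\emph{Anticipated obstacle.} The finicky part is Step 1 — correctly composing the several diagonal conjugations (and keeping them on the right side) so as to extract the single criterion $g_j|_{z_j=0}\in x^{t_j}R^{[m+n]}$. Once that target is in hand, Step 2 is essentially mechanical; the only things to check are that each $\Psi_l$ genuinely lies in $\EA_n(R^{[m]})\cap\IA_n(R^{[m]})$, that a sweep perturbs the $z_j$-component only through $z_1,\dots,z_{j-1}$ and the $y$'s, and that exactly one power of $x$ is gained per sweep.
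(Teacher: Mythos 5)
Your proof is correct and is essentially the paper's own argument transported to unconjugated coordinates: both proofs iteratively compose with elementaries that subtract the $z_j$-free part of the deviation and invoke Lemma \ref{taylor} to gain one factor of $x$ per correction. The only organizational differences are that the paper works inside $A_\tau$ and increments one coordinate of $\sigma'$ at a time, whereas you sweep all $n$ coordinates per pass and phrase the target as the explicit divisibility $g_j|_{z_j=0}\in x^{t_j}R^{[m+n]}$.
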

\begin{proof}
We begin by writing $\tau = (t_1,\ldots,t_n) \in \IN ^n$ and 
\begin{equation}\label{strongIAalphaeq}
\alpha = (y_1+xF_1,\ldots,y_m+xF_m,z_1+x^{-t_1+1}Q_1,\ldots,z_n+x^{-t_n+1}Q_n)
\end{equation}
for some $F_1,\ldots,F_m,Q_1, \ldots, Q_n \in A_\tau$. We prove the following by induction.
\begin{claim}
For any $\sigma^\prime = (s_1,\ldots,s_n) \in \IN ^n$, there exists $\phi \in \EA _n ^\tau(R^{[m]}) \cap \IA _n ^\tau (R^{[m]})$ such that $\phi \circ \alpha \in \IA _{m+n} ^\tau (R)$ is of the form
$$\phi \circ \alpha = (y_1+xF_1,\ldots,y_m+xF_m,z_1+xz_1G_1+x^{-t_1+s_1+1}H_1,\ldots, z_n+xz_nG_n+x^{-t_n+s_n+1}H_n) $$
for some $G_1,H_1,\ldots,G_n,H_n \in A_\tau$.
\end{claim}
Clearly the case $\sigma ^\prime = \tau$ proves the lemma, since $\sigma \leq \tau$ implies $A_{\tau} \subset A_{\sigma}$.  We induct on $\sigma ^\prime$ in the partial ordering of $\IN ^n$.  Our base case of $\sigma^\prime =(0,\ldots,0)$ is provided by $\phi = \id$ (from \eqref{strongIAalphaeq}).  So we assume $\sigma ^\prime > {\mathbf 0}$.

Suppose the claim holds for $\sigma ^\prime$.  We will show that this implies the claim for $\sigma ^\prime+e_k$, where $e_k$ is the $k$-th standard basis vector of $\IN ^n$.  Without loss of generality, we take $k=1$, so $e_1=(1,0,\ldots,0)$.  By the inductive hypothesis, we may write

$$\alpha ^\prime := \phi \circ \alpha = (y_1+xF_1,\ldots,y_m+xF_m,z_1+xz_1G_1+x^{-t_1+s_1+1}H_1,\ldots, z_n+xz_nG_n+x^{-t_n+s_n+1}H_n)$$
for some $G_i,H_i \in A_\tau$ and $\phi \in \EA _n ^\tau(R^{[m]})\cap \IA _n ^\tau (R^{[m]})$.  Write $H_1=P(\hat{z}_1)+x^{t_1}z_1Q$ for some $Q \in A_\tau$ and $P(\hat{z}_1) \in A_\tau[\hat{z}_1]$.  Then we may set $\phi ^\prime = (z_1-x^{-t_1+s_1+1}P(\hat{z}_1),z_2,\ldots,z_n) \in \EA _n ^\tau (R^{[m]}) \cap \IA _n ^\tau (R^{[m]})$ and compute
\begin{align}
(\phi ^\prime \circ \alpha ^\prime)(z_1)  &=z_1+xz_1G_1+x^{-t_1+s_1+1}(H_1-\alpha^\prime(P(\hat{z}_1))) \notag   \\
&= z_1+xz_1G_1+x^{-t_1+s_1+1}(P(\hat{z}_1)+x^{t_1}z_1Q-\alpha^\prime(P(\hat{z}_1))\notag \\
&=z_1+xz_1(G_1+x^{s_1}Q)+x^{-t_1+s_1+1}(P(\hat{z}_1)-\alpha^\prime(P(\hat{z}_1)))\label{eq1} 
\end{align}
Since $\alpha ^\prime \in \IA _{m+n} ^\tau(R)$, we can write (by Lemma \ref{taylor}) $\alpha ^\prime (P(\hat{z}_1))=P(\hat{z}_1)-xH_1^\prime$ for some $H_1 ^\prime \in A_\tau$.  We also set $G_1 ^\prime = G_1+x^{s_1}Q \in A_{\tau}$, and thus clearly see from $\eqref{eq1}$ that $\phi ^\prime \circ \alpha ^\prime$ is of the required form:
\begin{align*}
\phi ^\prime \circ \alpha ^\prime &= (y_1+xF_1,\ldots,y_m+xF_m,z_1+xz_1G_1^\prime+x^{-t_1+(s_1+1)+1}H_1 ^\prime, \\
&\phantom{xxx} z_2+xz_2G_2+x^{-t_2+s_2+1}H_2, \ldots, z_n+xz_nG_n+x^{-t_n+s_n+1}H_n)
\end{align*}
\end{proof}

\begin{corollary}\label{IAreduce}
Let $\sigma \leq \tau \in \IN ^n$, and let $\alpha \in \IA _{m+n} ^\tau (R)$.  Then there exists $\beta \in \IA _{m+n} ^\sigma (R^{[m]})$ and $\phi \in \EA _n ^{\tau}(R^{[m]})$ such that $\alpha = \beta \circ \phi$.  Moreover, if $\tau - \sigma = (0,\ldots,0,\delta,0,\ldots,0)$ then $\phi$ can be taken to be elementary.
\end{corollary}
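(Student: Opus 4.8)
The plan is to deduce the factorization from Lemma~\ref{strongIA} by first passing to the inverse of $\alpha$. Since $\IA_{m+n}^\tau(R)$ is a group, $\alpha^{-1}$ again lies in $\IA_{m+n}^\tau(R)$, so Lemma~\ref{strongIA} applied to $\alpha^{-1}$ (in place of $\alpha$) produces $\phi \in \EA_n^\tau(R^{[m]}) \cap \IA_n^\tau(R^{[m]})$ with $\phi \circ \alpha^{-1} \in \bigcap_{{\mathbf 0} \le \sigma' \le \tau} \IA_{m+n}^{\sigma'}(R)$. For the given $\sigma \le \tau$ this says in particular that $\gamma := \phi \circ \alpha^{-1}$ lies in $\IA_{m+n}^\sigma(R)$. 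Inverting the identity $\gamma = \phi \circ \alpha^{-1}$ gives $\alpha = \gamma^{-1} \circ \phi$, and since $\IA_{m+n}^\sigma(R)$ is a group we may set $\beta := \gamma^{-1} \in \IA_{m+n}^\sigma(R)$, while $\phi \in \EA_n^\tau(R^{[m]})$ by construction; this is the asserted factorization $\alpha = \beta \circ \phi$. The point of working with $\alpha^{-1}$ is that Lemma~\ref{strongIA} naturally left-multiplies $\alpha^{-1}$ by an elementary-type map, and inverting then places that map on the right of the remaining $\IA_{m+n}^\sigma(R)$-factor, so that no $\IA$-factor ever has to be commuted past the elementary one.

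For the ``moreover'' statement, where $\tau - \sigma = \delta e_k$ has a single nonzero entry, I would not invoke Lemma~\ref{strongIA} as a black box but rather re-run only the relevant portion of its proof. The Claim inside that proof is established by induction on $\sigma'$ in which each step composes on the left with one elementary in one chosen variable; carrying that induction only as far as $\sigma' = \delta e_k$, and choosing at each of the $\delta$ steps to increment the $k$-th coordinate (permissible, since the inductive step is proved for an arbitrary coordinate), produces a $\phi$ that is a product of $\delta$ elementaries all in the single variable $z_k$, hence itself a single elementary in $z_k$, still lying in $\EA_n^\tau(R^{[m]}) \cap \IA_n^\tau(R^{[m]})$. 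One then checks, exactly as in the last lines of the proof of Lemma~\ref{strongIA}, that already $\phi \circ \alpha^{-1} \in \IA_{m+n}^\sigma(R)$ for this particular $\sigma$: for the coordinates $j \ne k$ this follows from $A_\tau \subset A_\sigma$, and for coordinate $k$ the summand $x z_k G_k$ --- thanks to its built-in factor $z_k$ --- absorbs the power of $x$ needed to rewrite the coefficient as an element of $A_\sigma$. Inverting $\gamma = \phi \circ \alpha^{-1}$ as before now gives $\alpha = \beta \circ \phi$ with $\beta \in \IA_{m+n}^\sigma(R)$ and $\phi$ a single elementary.

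I do not anticipate a serious obstacle: once the passage through $\alpha^{-1}$ is in place, the first assertion is essentially formal. The only genuinely new verification is the one just mentioned --- that truncating the induction of Lemma~\ref{strongIA} at $\delta e_k$, rather than at $\tau$, still lands inside $\IA_{m+n}^\sigma(R)$ --- and this is precisely the power-of-$x$ bookkeeping already carried out there: one expands the coefficients of $\phi \circ \alpha^{-1}$ in the generators $x^{s_i}z_i$ of $A_\sigma$ and uses $\sigma \le \tau$ together with the extra factor $z_k$ in the $x z_k G_k$ summand to see that no negative powers of $x$ remain.
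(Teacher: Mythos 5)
Your proof is correct and follows the route the paper intends: Corollary \ref{IAreduce} is stated without proof as a consequence of Lemma \ref{strongIA}, and applying that lemma to $\alpha^{-1}$ is exactly the right move to land the elementary factor on the right-hand side of the factorization (which is the order needed later, e.g.\ in Lemma \ref{NoAlphaNoRho}). Your treatment of the ``moreover'' clause --- truncating the induction of Lemma \ref{strongIA} at $\sigma' = \delta e_k$ so that all $\delta$ elementary factors are in the single variable $z_k$ and hence compose to one elementary in $\EA_n^\tau(R^{[m]})$ --- is also the intended argument and checks out.
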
 

\begin{theorem}\label{crucial}Let $\tau \in \IN ^n$, $\alpha \in \IA_{m+n}^\tau(R)$, and let $\phi \in \GA _n ^\tau (R^{[m]})$.  Then there exists $\tilde{\phi} \in \langle \phi, \EA_n ^\tau (R^{[m]}) \rangle$ such that $$\tilde{\phi} \circ \alpha \circ \phi \in \bigcap _{{\mathbf 0} \leq \sigma \leq \tau} \IA_{m+n}^\sigma (R)$$ 
In particular, $\tilde{\phi} \circ \alpha \circ \phi \in \IA _{m+n} (R)$; and if $\phi, \alpha \in \TA _{m+n}(S)$, then $\tilde{\phi} \circ \alpha \circ \phi \in \TA _{m+n}(S)$ as well.
\end{theorem}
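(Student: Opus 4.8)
The plan is to reduce Theorem~\ref{crucial} to the single-automorphism statement already proved in Lemma~\ref{strongIA}, after first conjugating $\alpha$ past $\phi$. First I would set $\alpha' := \phi^{-1} \circ \alpha \circ \phi$; by Lemma~\ref{alphalemma} this lies again in $\IA_{m+n}^\tau(R)$ (this is the only place the hypothesis $\phi \in \GA_n^\tau(R^{[m]})$ is used). Then I would apply Lemma~\ref{strongIA} to $\alpha'$ to obtain some $\psi \in \EA_n^\tau(R^{[m]}) \cap \IA_n^\tau(R^{[m]})$ with
$$\psi \circ \alpha' \in \bigcap _{{\mathbf 0} \leq \sigma \leq \tau} \IA_{m+n}^\sigma(R).$$

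Next I would take $\tilde\phi := \psi \circ \phi^{-1}$. Since $\psi \in \EA_n^\tau(R^{[m]})$ and $\phi^{-1} \in \langle \phi \rangle$, we have $\tilde\phi \in \langle \phi, \EA_n^\tau(R^{[m]}) \rangle$, and
$$\tilde\phi \circ \alpha \circ \phi \;=\; \psi \circ (\phi^{-1} \circ \alpha \circ \phi) \;=\; \psi \circ \alpha',$$
so $\tilde\phi \circ \alpha \circ \phi$ lies in the required intersection. For the two ``in particular'' assertions: the case $\sigma = {\mathbf 0}$ of the intersection is $\IA_{m+n}^{\mathbf 0}(R) = \IA_{m+n}(R)$, giving the first. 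For the second, I would argue that $\EA_n^\tau(R^{[m]}) \subseteq \TA_{m+n}(S)$: any element of $\EA_n^\tau(R^{[m]})$ has the form $\phi_0^\tau$ with $\phi_0 \in \EA_n(R^{[m]}) \subseteq \EA_{m+n}(R) \subseteq \TA_{m+n}(S)$, and $\phi_0^\tau$ is a conjugate of $\phi_0$ by the diagonal automorphism $g = (y_1,\ldots,y_m,x^{t_1}z_1,\ldots,x^{t_n}z_n) \in \GL_{m+n}(S) \subseteq \TA_{m+n}(S)$, so it stays in the group $\TA_{m+n}(S)$. Hence if $\phi, \alpha \in \TA_{m+n}(S)$, then $\tilde\phi \circ \alpha \circ \phi = \psi \circ \phi^{-1} \circ \alpha \circ \phi \in \TA_{m+n}(S)$ as well.

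I expect no real obstacle in this argument itself: once the preceding lemmas are in place, Theorem~\ref{crucial} is a short assembly. The only points needing care are bookkeeping ones --- that the same $\tau$ governs $\alpha'$, the factor $\psi$ produced by Lemma~\ref{strongIA}, and the conjugating diagonal twist, and that those factors remain tame over $S$ (handled by the ``$\TA_{m+n}(S)$ is a group'' observation above). The genuinely hard input has already been isolated into Lemma~\ref{strongIA}, whose inductive clearing of the $x^{-t_k+1}$ denominators is exactly what makes membership in $\bigcap_{{\mathbf 0} \leq \sigma \leq \tau} \IA_{m+n}^\sigma(R)$ attainable.
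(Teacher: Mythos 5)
Your proposal is correct and follows essentially the same route as the paper: conjugate $\alpha$ by $\phi$ (using Lemma~\ref{alphalemma}), clear the remaining obstruction with the $\psi$ from Lemma~\ref{strongIA}, and set $\tilde\phi = \psi \circ \phi^{-1}$. The only difference is that you spell out the tameness of elements of $\EA_n^\tau(R^{[m]})$ over $S$, which the paper leaves implicit.
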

\begin{proof}
By Lemma \ref{alphalemma},  $\phi ^{-1} \circ \alpha \circ \phi \in \IA _{m+n} ^\tau (R)$.  But then by Lemma \ref{strongIA}, there exists $\psi \in \EA _n ^\tau (R^{[m]}$) such that $\displaystyle \psi \circ (\phi ^{-1} \circ \alpha \circ \phi) \in \cap _{{\mathbf 0} \leq \sigma \leq \tau} \IA _{m+n} ^\sigma (R)$.  So we simply set $\tilde{\phi}=\psi \circ \phi ^{-1} \in \langle \phi, \EA _n ^\tau(R^{[m]}) \rangle$ to obtain the desired result.
\end{proof}



At this point, one could go ahead and directly prove Main Theorem 1.  However, it will be useful in proving Main Theorem 2 to have the stronger result of Theorem \ref{mt1strong} (which immediately implies Main Theorem 1).  To prove Theorem \ref{mt1strong}, we need to study $\GP_n(S^{[m]})$ and its relation with $\IA _{m+n}^\tau(R)$.
\begin{definition}\label{permdef}
Let $A$ be a connected, reduced ring.  Given $\rho \in \GP _n (S^{[m]})$, we can then write $\rho = (\lambda _{\sigma(1)} x^{r_{\sigma(1)}}z_{\sigma(1)}, \ldots, \lambda _{\sigma(n)} x^{r_{\sigma(n)}}z_{\sigma(n)})$ for some permutation $\sigma \in \mathfrak{S}_n$, $\lambda _i \in A^*$, and $r_i \in \IZ$.  If we are also given $\tau=(t_1,\ldots,t_n) \in \IN ^n$, we can define $\rho(\tau)=(t_{\sigma^{-1}(1)}+r_{1},\ldots,t_{\sigma^{-1}(n)}+r_{n})\in \IZ ^n$.  
\end{definition}
\begin{remark}
The condition that $A$ is  connected and reduced is essential to obtain $(A^{[m]}[x,x^{-1}])^*=\{\lambda x^r\ | \lambda \in A^*, r \in \IZ\}$, which is what allows us to write $\rho$ in the given form. 
\end{remark}

The definition of $\rho(\tau)$ is chosen precisely so that  the following lemma holds.
\begin{lemma}Let $A$ be a connected, reduced ring, let $\rho \in \GP_n(S^{[m]})$ and let $\tau \in \IN ^n$.  Then $\rho(A_{\tau})=A_{\rho(\tau)}$.
\end{lemma}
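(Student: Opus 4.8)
The plan is to compute $\rho(A_\tau)$ directly as an $R^{[m]}$-subalgebra of $S^{[m+n]}$ and match its generators with those of $A_{\rho(\tau)}$. The point to keep in mind is that $\rho \in \GP_n(S^{[m]})$ is an automorphism of $S^{[m+n]} = S^{[m]}[z_1,\dots,z_n]$ that fixes $S^{[m]}$ pointwise; since $R^{[m]} \subseteq S^{[m]}$, it fixes $R^{[m]}$, and so it carries the $R^{[m]}$-subalgebra $A_\tau = R^{[m]}[x^{t_1}z_1,\dots,x^{t_n}z_n]$ onto the $R^{[m]}$-subalgebra generated by the elements $\rho(x^{t_i}z_i)$, $1 \le i \le n$. (Here I tacitly extend the notation $A_\tau$ to exponent vectors $\tau \in \IZ^n$ in the evident way, since $\rho(\tau)$ may have negative entries.)

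First I would invoke the hypothesis that $A$ is connected and reduced, together with the Remark, to write $\rho$ in the standard form $\rho = (\lambda_{\sigma(1)}x^{r_{\sigma(1)}}z_{\sigma(1)}, \dots, \lambda_{\sigma(n)}x^{r_{\sigma(n)}}z_{\sigma(n)})$ for some $\sigma \in \mathfrak{S}_n$, $\lambda_i \in A^*$, and $r_i \in \IZ$; equivalently $\rho(z_i) = \lambda_{\sigma(i)}x^{r_{\sigma(i)}}z_{\sigma(i)}$. Then $\rho(x^{t_i}z_i) = \lambda_{\sigma(i)}x^{t_i+r_{\sigma(i)}}z_{\sigma(i)}$, and re-indexing by $j = \sigma(i)$ (so $i = \sigma^{-1}(j)$) as $i$ runs over $\{1,\dots,n\}$, the full list of generators $\rho(x^{t_i}z_i)$ becomes $\lambda_j x^{t_{\sigma^{-1}(j)}+r_j}z_j$ for $j = 1,\dots,n$; by the very definition of $\rho(\tau)$ this is precisely $\lambda_j x^{(\rho(\tau))_j}z_j$.

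Finally, since each $\lambda_j$ lies in $A^* \subseteq (R^{[m]})^*$, adjoining $\lambda_j x^{(\rho(\tau))_j}z_j$ to $R^{[m]}$ is the same as adjoining $x^{(\rho(\tau))_j}z_j$. Hence $\rho(A_\tau) = R^{[m]}[\lambda_1 x^{(\rho(\tau))_1}z_1,\dots,\lambda_n x^{(\rho(\tau))_n}z_n] = R^{[m]}[x^{(\rho(\tau))_1}z_1,\dots,x^{(\rho(\tau))_n}z_n] = A_{\rho(\tau)}$, as desired.

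There is no real obstacle here: the argument is pure bookkeeping with the permutation $\sigma$ and with the definition of $\rho(\tau)$. The only subtle input is the normal form for $\rho$, which relies on $(A^{[m]}[x,x^{-1}])^* = \{\lambda x^r : \lambda \in A^*, r \in \IZ\}$ — and this is exactly where connectedness and reducedness of $A$ enter, as the Remark notes.
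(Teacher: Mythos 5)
Your proof is correct and is exactly the bookkeeping computation the paper intends; the paper in fact omits the proof entirely, remarking only that the definition of $\rho(\tau)$ was chosen precisely to make the lemma hold. Your explicit handling of the re-indexing $j=\sigma(i)$, the absorption of the units $\lambda_j\in A^*$, and the tacit extension of $A_\tau$ to exponent vectors in $\IZ^n$ are all consistent with how the paper uses the lemma.
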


Recall that for $\tau = (t_1,\ldots, t_n) \in \IN ^n$, one obtains $\phi^\tau$ from $\phi$ via conjugation by $(x^{t_1}z_1,\ldots,x^{t_n}z_n) \in \GP _n (S^{[m]})$.  Then, recalling that for any subgroup $H \leq \GA_n(R^{[m]})$, $H^\tau = \{\phi ^\tau\ |\ \phi \in H\}$, we immediately see the following.  
\begin{lemma}
Let $H \leq \GA _{n}(R^{[m]})$, $\tau \in \IN ^n$, and $\rho \in \GP _n (S^{[m]})$.  Let $\phi \in \GA _n(S^{[m]})$.  Then $\phi \in H^\tau$ if and only if $\rho ^{-1} \circ \Phi \circ \rho \in H^{\rho(\tau)}$.
\end{lemma}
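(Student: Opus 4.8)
The plan is to recast both sides as conjugates of $H$ by a monomial automorphism. Put $\delta_\tau := (x^{t_1}z_1,\ldots,x^{t_n}z_n)\in\GP_n(S^{[m]})$; unwinding the definition of the $\tau$-operation gives $\phi^\tau=\delta_\tau^{-1}\circ\phi\circ\delta_\tau$, hence $H^\tau = \delta_\tau^{-1}\,H\,\delta_\tau$ and, in the same way, $H^{\rho(\tau)} = \delta_{\rho(\tau)}^{-1}\,H\,\delta_{\rho(\tau)}$ (conjugation by $\delta_{\rho(\tau)}$ is legitimate inside $\GA_n(S^{[m]})$ even when $\rho(\tau)$ has negative entries). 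Since $\rho^{-1}\circ\phi\circ\rho\in H^{\rho(\tau)}$ exactly when $\phi\in\rho\,H^{\rho(\tau)}\,\rho^{-1}$, the assertion reduces to the single identity of subsets $\rho\,H^{\rho(\tau)}\,\rho^{-1}=H^\tau$, and that is what I would prove.

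The heart of the argument is to identify the ``change of frame'' $\rho\circ\delta_{\rho(\tau)}^{-1}$. Writing $\rho=(\lambda_{\sigma(1)}x^{r_{\sigma(1)}}z_{\sigma(1)},\ldots,\lambda_{\sigma(n)}x^{r_{\sigma(n)}}z_{\sigma(n)})$ as in Definition~\ref{permdef} and plugging in $\rho(\tau)=(t_{\sigma^{-1}(1)}+r_1,\ldots,t_{\sigma^{-1}(n)}+r_n)$, a one-line computation of the $i$-th coordinate shows that the $x$-powers contributed by $\rho$ and by $\delta_{\rho(\tau)}^{-1}$ combine to exactly $x^{-t_i}$, so that
\[
\rho\circ\delta_{\rho(\tau)}^{-1}=\delta_\tau^{-1}\circ P,\qquad P:=(\lambda_{\sigma(1)}z_{\sigma(1)},\ldots,\lambda_{\sigma(n)}z_{\sigma(n)})\in\GP_n(A)\leq\GP_n(R^{[m]}).
\]
In other words, $\rho(\tau)$ was defined precisely so that $P$ comes out $x$-free; this is the same bookkeeping as the preceding lemma $\rho(A_\tau)=A_{\rho(\tau)}$, moved up to the level of automorphism groups.

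Putting the two together, $\rho\,H^{\rho(\tau)}\,\rho^{-1}=(\rho\circ\delta_{\rho(\tau)}^{-1})\,H\,(\rho\circ\delta_{\rho(\tau)}^{-1})^{-1}=\delta_\tau^{-1}\,(P\,H\,P^{-1})\,\delta_\tau$. As $P$ is a constant generalized permutation over $R^{[m]}$, conjugation by it sends $H$ to itself, so $P\,H\,P^{-1}=H$ and therefore $\rho\,H^{\rho(\tau)}\,\rho^{-1}=\delta_\tau^{-1}\,H\,\delta_\tau=H^\tau$, as wanted. I do not expect a real obstacle here --- the lemma is essentially bookkeeping --- but two small points want care: (i) the exponent arithmetic in $\rho\circ\delta_{\rho(\tau)}^{-1}$ must collapse exactly to the $x$-free permutation $P$; and (ii) one invokes the $\GP_n(R^{[m]})$-invariance of $H$ under conjugation, which is automatic (and all that is really used) for the subgroups $\EA_n$, $\GL_n$, $\GP_n$, $\IA_n$ and their intersections appearing in the applications.
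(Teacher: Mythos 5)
Your computation is correct, and it is worth saying up front that the paper offers no proof of this lemma at all --- it is introduced with ``we immediately see the following'' --- so your write-up is the fleshed-out version of the argument the author is silently invoking. The reduction to the single identity $\rho\, H^{\rho(\tau)}\,\rho^{-1}=H^{\tau}$ and the key cancellation $\rho\circ\delta_{\rho(\tau)}^{-1}=\delta_{\tau}^{-1}\circ P$ with $P=(\lambda_{\sigma(1)}z_{\sigma(1)},\ldots,\lambda_{\sigma(n)}z_{\sigma(n)})$ both check out: the $i$-th component of $\rho\circ\delta_{\rho(\tau)}^{-1}$ is $\lambda_{\sigma(i)}\,x^{\,r_{\sigma(i)}-(t_{\sigma^{-1}(\sigma(i))}+r_{\sigma(i)})}z_{\sigma(i)}=\lambda_{\sigma(i)}x^{-t_i}z_{\sigma(i)}$, which is exactly $(\delta_{\tau}^{-1}\circ P)_i$. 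This is the automorphism-group analogue of the preceding lemma $\rho(A_{\tau})=A_{\rho(\tau)}$, just as you say.

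The point you defer to your remark (ii) is, however, not a cosmetic one: the condition $P\,H\,P^{-1}=H$ is genuinely needed and is \emph{not} automatic for an arbitrary subgroup $H\leq\GA_n(R^{[m]})$, so the lemma as literally stated is false in that generality. For example, take $n=2$, let $H$ be the cyclic subgroup generated by $(z_1+z_2^2,z_2)$, let $\tau={\mathbf 0}$, and let $\rho=(z_2,z_1)$; then $\rho(\tau)={\mathbf 0}$ but $\rho^{-1}\circ(z_1+z_2^2,z_2)\circ\rho=(z_1,z_2+z_1^2)\notin H=H^{\rho(\tau)}$. So your proof is the right proof of the statement the paper actually uses: every application in the paper takes $H$ to be one of $\EA_n$, $\GL_n$, $\GP_n$, or $\IA_{m+n}(R)$ (the last being normal in $\GA_{m+n}(R)$), all of which are stable under conjugation by constant generalized permutations. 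The only improvement I would ask of you is to promote that observation from an aside to an explicit hypothesis (or to restrict $H$ to those subgroups), since it is the one place where the ``bookkeeping'' could actually fail. There is no gap in your argument itself.
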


\begin{corollary}\label{IApermutation}
Let $A$ be a connected, reduced ring, let $\tau \in \IN ^n$, and let $\alpha \in \IA _{m+n}^\tau (R)$ and $\rho \in \GP _n (S^{[m]})$.  Then $\rho ^{-1} \circ \alpha \circ \rho \in \IA _{m+n} ^{\rho(\tau)} (R)$.
\end{corollary}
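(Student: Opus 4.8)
Corollary \ref{IApermutation} — proof plan.

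The statement is an immediate consequence of the preceding lemma applied to the subgroup $H = \IA_{m+n}(R)$, once we unwind the definitions, so the plan is mostly bookkeeping rather than new ideas. Recall that $\IA_{m+n}^\tau(R) = \IA_{m+n}^{(\mathbf{0},\tau)}(R)$ is by definition $\{\beta^{(\mathbf{0},\tau)} \mid \beta \in \IA_{m+n}(R)\}$, i.e.\ the $(\mathbf{0},\tau)$-conjugate of the subgroup $\IA_{m+n}(R) \leq \GA_{m+n}(R) = \GA_n(R^{[m]})$ (viewing the $y$-variables as adjoined to the base). So I would start by writing $\alpha = \beta^{(\mathbf{0},\tau)}$ for some $\beta \in \IA_{m+n}(R)$, which places $\alpha \in H^\tau$ in the notation of the lemma above with $H = \IA_{m+n}(R) \leq \GA_n(R^{[m]})$.

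Next I would apply the lemma directly: it says that for $\phi \in \GA_n(S^{[m]})$, $\phi \in H^\tau$ if and only if $\rho^{-1} \circ \phi \circ \rho \in H^{\rho(\tau)}$. Taking $\phi = \alpha$, the "only if" direction gives $\rho^{-1} \circ \alpha \circ \rho \in H^{\rho(\tau)} = \IA_{m+n}^{\rho(\tau)}(R)$, which is exactly the conclusion. The hypothesis that $A$ is connected and reduced is what is needed to make sense of $\rho(\tau)$ via Definition \ref{permdef} (so that $\rho$ can be written in the generalized-permutation normal form with unit-times-power-of-$x$ entries), and it is inherited correctly since $A$ connected reduced implies $A^{[m]}$ connected reduced.

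The only subtlety to check is that the lemma as stated applies with the right $\tau$: the lemma is phrased for $H \leq \GA_n(R^{[m]})$ and $\tau \in \IN^n$, with $H^\tau$ obtained by conjugating by $(x^{t_1}z_1,\ldots,x^{t_n}z_n)$, whereas $\IA_{m+n}^\tau(R)$ is defined by conjugating by $(y_1,\ldots,y_m,x^{t_1}z_1,\ldots,x^{t_n}z_n)$. These agree because the $y$-block of the conjugating map is the identity, so conjugation by $(y_1,\ldots,y_m,x^{t_1}z_1,\ldots,x^{t_n}z_n)$ on $\GA_n(R^{[m]})$ is literally the action $\phi \mapsto \phi^\tau$ used to define $\GA_n^\tau(R^{[m]})$ and hence $H^\tau$. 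I expect no real obstacle here — the main point is simply to verify that $\rho \in \GP_n(S^{[m]})$ normalizes things compatibly, which is precisely the content of the lemma; the corollary is the specialization $H = \IA_{m+n}(R)$, and one just needs to note $\IA_{m+n}(R)$ is genuinely a subgroup of $\GA_n(R^{[m]})$ stable under the relevant constructions. A one- or two-line proof citing the lemma and the definition of $\IA_{m+n}^\tau(R)$ should suffice.
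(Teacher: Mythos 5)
Your approach coincides with the paper's: Corollary \ref{IApermutation} is indeed meant to be an immediate specialization of the preceding lemma, and reducing to that lemma with $H = \IA_{m+n}(R)$ is exactly the intended argument. One assertion in your write-up is false, however: $\GA_{m+n}(R)$ is not equal to $\GA_n(R^{[m]})$, and $\IA_{m+n}(R)$ is not a subgroup of $\GA_n(R^{[m]})$ --- by Lemma \ref{characterize}(1) its elements move the $y_j$ (e.g.\ $y_1 \mapsto y_1 + xF_1$), whereas $\GA_n(R^{[m]})$ consists of $R^{[m]}$-automorphisms fixing every $y_j$. So the lemma as literally stated does not apply to $H = \IA_{m+n}(R)$. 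The repair is the one you already gesture at when you observe that the $y$-block of the conjugating map is the identity: the lemma and its conjugation argument extend verbatim to subgroups $H \leq \GA_{m+n}(R)$ twisted by $({\mathbf 0},\tau)$, since $\rho$, extended to fix the $y_j$, is a generalized permutation of all $m+n$ variables satisfying $\rho({\mathbf 0},\tau) = ({\mathbf 0},\rho(\tau))$, so conjugation by $\rho$ carries $H^{({\mathbf 0},\tau)} = \IA_{m+n}^{\tau}(R)$ to $H^{({\mathbf 0},\rho(\tau))} = \IA_{m+n}^{\rho(\tau)}(R)$. With that one-line adjustment your argument is complete and agrees with the paper's.
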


\begin{corollary}\label{earhotau}
Let $A$ be a connected, reduced ring, let $\tau \in \IN ^n$, let $\Phi \in \EA _n ^{\tau} (R^{[m]})$ be elementary, and let $\rho \in \GP _n (S^{[m]})$.  Then $\Phi \in \EA _n ^{\rho(\tau)}(R^{[m]})$ if and only if $\rho \circ \Phi \circ \rho ^{-1} \in \EA _n ^\tau (R^{[m]})$.
\end{corollary}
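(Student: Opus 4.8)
The plan is to obtain Corollary~\ref{earhotau} as a one‑line instance of the lemma immediately preceding it, namely the statement that for any subgroup $H \leq \GA_n(R^{[m]})$, any $\tau \in \IN^n$, any $\rho \in \GP_n(S^{[m]})$, and any $\psi \in \GA_n(S^{[m]})$, one has $\psi \in H^\tau$ if and only if $\rho^{-1} \circ \psi \circ \rho \in H^{\rho(\tau)}$. Taking $H = \EA_n(R^{[m]})$ identifies $H^\tau = \EA_n^\tau(R^{[m]})$ and $H^{\rho(\tau)} = \EA_n^{\rho(\tau)}(R^{[m]})$, so the whole content of the corollary is that lemma applied to a single well‑chosen $\psi$.

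First I would record that $\rho(\tau)$ is defined at all: this is exactly what the hypothesis ``$A$ connected and reduced'' buys us, since (Definition~\ref{permdef} and the remark following it) connectedness and reducedness are what allow $\rho$ to be written as $\rho = (\lambda_{\sigma(1)}x^{r_{\sigma(1)}}z_{\sigma(1)}, \ldots, \lambda_{\sigma(n)}x^{r_{\sigma(n)}}z_{\sigma(n)})$ and hence $\rho(\tau) \in \IZ^n$ to be formed. Then I would set $\psi := \rho \circ \Phi \circ \rho^{-1}$; this lies in $\GA_n(S^{[m]})$ because $\Phi \in \EA_n^\tau(R^{[m]}) \subset \GA_n(S^{[m]})$ and $\rho \in \GP_n(S^{[m]}) \subset \GA_n(S^{[m]})$, so the preceding lemma applies to it. Since $\rho^{-1} \circ \psi \circ \rho = \Phi$, the lemma gives $\psi \in \EA_n^\tau(R^{[m]})$ if and only if $\Phi \in \EA_n^{\rho(\tau)}(R^{[m]})$, which is precisely the asserted equivalence.

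I would also add a remark on the hypothesis that $\Phi$ is elementary: it plays no role in the equivalence of memberships, which holds for arbitrary $\Phi \in \GA_n(S^{[m]})$. I keep it because it is the case used later and because, via Lemma~\ref{characterize}(2), once $\Phi$ is an elementary element of $\EA_n^\tau(R^{[m]})$ (so it fixes $n-1$ of the $z_i$), it is again elementary as an element of $\EA_n^{\rho(\tau)}(R^{[m]})$ whenever it lies there, and correspondingly $\rho \circ \Phi \circ \rho^{-1}$ is elementary in $\EA_n^\tau(R^{[m]})$ whenever it lies there.

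There is no genuine obstacle: the substantive work is already packaged into the preceding lemma and, underneath it, the identity $\rho(A_\tau) = A_{\rho(\tau)}$. If that lemma were unavailable, the proof would instead reduce to checking that, with $p_\tau := (x^{t_1}z_1, \ldots, x^{t_n}z_n)$, the products $\rho \circ p_\tau$ and $p_{\rho(\tau)}$ differ by a generalized permutation matrix with entries in $R^{[m]}$ — essentially $\rho$ with its powers of $x$ stripped — and that such a matrix normalizes $\EA_n(R^{[m]})$. That is a routine but slightly fiddly bookkeeping computation using $(A^{[m]}[x,x^{-1}])^* = \{\lambda x^r : \lambda \in A^*, r \in \IZ\}$, and keeping the permutation and exponent indices straight there is the only place one could slip.
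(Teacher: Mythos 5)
Your proposal is correct and is exactly the derivation the paper intends: the corollary is stated without proof as an immediate consequence of the preceding lemma, obtained by taking $H = \EA_n(R^{[m]})$ and applying it to $\psi = \rho \circ \Phi \circ \rho^{-1}$, just as you do. Your side remarks (on why connectedness and reducedness are needed to define $\rho(\tau)$, and on preservation of elementarity) are accurate and consistent with the paper.
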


\begin{corollary}\label{rhopush}
Let $A$ be a connected, reduced ring, let $\phi \in \EA _n ^{\tau}(R^{[m]})$ and $\rho \in \GP _n (S^{[m]})$.  Then there exists $\phi ^\prime \in \EA _n ^{\rho(\tau)}(R^{[m]})$ such that $\phi \circ \rho = \rho \circ \phi ^\prime$.  Moreover, if $\phi$ is elementary, then so is $\phi ^\prime$.
\end{corollary}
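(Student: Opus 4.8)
The plan is to set $\phi' := \rho^{-1}\circ\phi\circ\rho$, so that the identity $\phi\circ\rho = \rho\circ\phi'$ holds by construction; everything then reduces to verifying that this $\phi'$ lies in $\EA_n^{\rho(\tau)}(R^{[m]})$, and that it is elementary whenever $\phi$ is. This is precisely the analogue, with $\GP_n(S^{[m]})$ in place of $\GA_n^\tau(R^{[m]})$, of Corollary \ref{alphapush}, with the lemma just above Corollary \ref{IApermutation} (characterizing membership in $H^\tau$ by conjugation with $\rho$) taking over the role that Lemma \ref{alphalemma} plays there.

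For the membership I would apply that lemma with $H = \EA_n(R^{[m]})$, noting $H^\tau = \EA_n^\tau(R^{[m]})$ and $H^{\rho(\tau)} = \EA_n^{\rho(\tau)}(R^{[m]})$ by definition. Since $\phi \in \EA_n^\tau(R^{[m]}) = H^\tau$ by hypothesis, the lemma immediately gives $\phi' = \rho^{-1}\circ\phi\circ\rho \in H^{\rho(\tau)} = \EA_n^{\rho(\tau)}(R^{[m]})$. The hypothesis that $A$ is connected and reduced enters only here, through that lemma and Definition \ref{permdef}, which are what make $\rho(\tau)$ meaningful.

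For the final clause, suppose $\phi$ is elementary. Writing $\rho$ as in Definition \ref{permdef}, $\rho$ (and hence $\rho^{-1}$) fixes each $y_j$ and sends each $z_i$ to a unit multiple of a single $z_{\sigma(i)}$; and $\phi$, being elementary over $R^{[m]}$, fixes $y_1,\dots,y_m$ together with all but one of the $z_i$. A direct substitution using these normal forms (together with Lemma \ref{characterize}(2)) shows that $\phi' = \rho^{-1}\circ\phi\circ\rho$ again fixes $y_1,\dots,y_m$ and all but one $z_i$, so it is elementary; combined with the previous paragraph, $\phi'$ is then an elementary element of $\EA_n^{\rho(\tau)}(R^{[m]})$.

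I do not anticipate any real obstacle. The only points needing care are the bookkeeping of composition order in the identity $\phi\circ\rho = \rho\circ\phi'$ and the fact that $\rho(\tau)$ a priori lies in $\IZ^n$ rather than $\IN^n$; but $H^\sigma$ is defined by conjugation inside $\GA_{m+n}(S)$, where $x$ is invertible, so the cited lemma already absorbs this and no additional argument is required.
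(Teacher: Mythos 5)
The paper states this as an unproved corollary of the immediately preceding lemma (the one characterizing $H^\tau$ via conjugation by $\rho$), and your argument — setting $\phi' = \rho^{-1}\circ\phi\circ\rho$, invoking that lemma with $H=\EA_n(R^{[m]})$, and checking elementarity by direct substitution — is exactly the intended derivation. Your proposal is correct and matches the paper's approach.
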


We now have the necessary tools to prove Theorem \ref{mt1strong}, and the interested reader may skip ahead.  The rest of this section develops some more tools for use in the proof of Main Theorem 2.

\begin{lemma} \label{NoAlphaNoRho}
Let $A$ be a connected, reduced ring.  Let $\tau _0, \ldots, \tau _{q+1} \in \IN ^n$.  Let $\rho _i \in \GP _n (S^{[m]})$, $\alpha _i \in \IA _{m+n} ^{\tau _i}(R)$, and $\phi _i \in \EA _n ^{\tau _{i+1}}(R^{[m]})$ for $0 \leq i \leq q$.  Suppose also that $\rho _i (\tau _i) \leq \tau _{i+1}$ for each $ 0 \leq i \leq q$.  Then there exist  $\alpha ^\prime \in \IA _{m+n} ^{\tau _0}(R)$  and, for each $0 \leq i < q$, $\phi _i ^\prime \in \EA _n ^{(\rho _{i+1} \circ \cdots \circ \rho _{q})(\tau_{i+1})} (R^{[m]})$ such that 
$$\alpha _0 \circ \rho _0 \circ \phi _0 \circ \cdots \circ \alpha _ q \circ \rho _q \circ \phi _q = \alpha  ^\prime \circ (\rho _0 \circ \cdots \circ \rho _q)  \circ \phi _0 ^\prime \circ \phi _{1} ^\prime \circ \cdots \circ \phi _{q-1} ^\prime \circ \phi _q$$
\end{lemma}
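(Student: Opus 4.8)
The plan is to prove the identity by induction on $q$, pushing the inner-automorphism factors $\alpha_i$ leftward past the generalized-permutation factors $\rho_j$ and the elementary-over-$A_\tau$ factors $\phi_j$, one at a time, absorbing the residue into a single $\alpha'$ at the far left. The two key facts I would use at each step are Corollary \ref{alphapush} (which lets one write $\alpha \circ \phi = \phi \circ \alpha'$ for $\phi \in \GA_n^\tau(R^{[m]})$ and $\alpha \in \IA_{m+n}^\tau(R)$, producing a new $\alpha'$ in the same $\IA_{m+n}^\tau(R)$) and Corollary \ref{IApermutation} (which says conjugating $\alpha \in \IA_{m+n}^\tau(R)$ by $\rho \in \GP_n(S^{[m]})$ lands in $\IA_{m+n}^{\rho(\tau)}(R)$). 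Crucially, one also needs the monotonicity bookkeeping: since $\rho_i(\tau_i) \leq \tau_{i+1}$ and $A_{\tau_{i+1}} \subset A_{\rho_i(\tau_i)}$ whenever $\rho_i(\tau_i) \le \tau_{i+1}$, an element of $\IA_{m+n}^{\rho_i(\tau_i)}(R)$ is automatically in $\IA_{m+n}^{\tau_{i+1}}(R)$ (this is the same "$\sigma \leq \tau \Rightarrow \IA^\tau \subset \IA^\sigma$"-type containment used in Lemma \ref{strongIA}, read in the appropriate direction), so the $\alpha$-factors can be merged across adjacent blocks.

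Concretely, I would proceed as follows. The base case $q=0$ is just $\alpha_0 \circ \rho_0 \circ \phi_0$, which already has the required shape with $\alpha' = \alpha_0$. For the inductive step, consider the leftmost block $\alpha_0 \circ \rho_0 \circ \phi_0$ followed by the rest, $\alpha_1 \circ \rho_1 \circ \phi_1 \circ \cdots \circ \alpha_q \circ \rho_q \circ \phi_q$, to which the inductive hypothesis applies (with the index set shifted by one), yielding $\alpha_1' \circ (\rho_1 \circ \cdots \circ \rho_q) \circ \phi_1' \circ \cdots \circ \phi_{q-1}' \circ \phi_q$ for suitable $\alpha_1' \in \IA_{m+n}^{\tau_1}(R)$ and $\phi_j' \in \EA_n^{(\rho_{j+1}\circ\cdots\circ\rho_q)(\tau_{j+1})}(R^{[m]})$. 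Now I must move $\alpha_0 \circ \rho_0 \circ \phi_0$ past the leading factor $\alpha_1'$. First use $\phi_0 \in \EA_n^{\tau_1}(R^{[m]}) \subseteq \GA_n^{\tau_1}(R^{[m]})$ and the fact that $\alpha_1' \in \IA_{m+n}^{\tau_1}(R)$ to apply Corollary \ref{alphapush}, writing $\phi_0 \circ \alpha_1' = \alpha_1'' \circ \phi_0$ (reading the corollary in the form $\alpha \circ \phi = \phi \circ \alpha'$, inverted as needed) with $\alpha_1'' \in \IA_{m+n}^{\tau_1}(R)$; then commute $\alpha_1''$ past $\rho_0$ via Corollary \ref{IApermutation} to get $\rho_0 \circ \alpha_1'' = \alpha_1''' \circ \rho_0$ with $\alpha_1''' \in \IA_{m+n}^{\rho_0(\tau_0)}(R)$... but wait — the conjugation identity gives $\rho_0^{-1} \circ \alpha \circ \rho_0 \in \IA_{m+n}^{\rho_0(\tau_0)}(R)$ for $\alpha \in \IA_{m+n}^{\tau_0}(R)$, so to push $\alpha_1''$ (which lives at level $\tau_1$) leftward through $\rho_0$ I should instead use that $\rho_0(\tau_0) \le \tau_1$ forces $\IA_{m+n}^{\tau_1}(R) \subseteq \IA_{m+n}^{\rho_0(\tau_0)}(R)$, hence $\alpha_1'' \in \IA_{m+n}^{\rho_0(\tau_0)}(R)$, and then Corollary \ref{IApermutation} (applied with $\rho_0$ and the tuple $\tau_0$, in the direction $\rho_0 \circ \beta = (\rho_0\beta\rho_0^{-1})\circ\rho_0$) yields $\rho_0 \circ \alpha_1'' = \alpha_1^{(iv)} \circ \rho_0$ with $\alpha_1^{(iv)} \in \IA_{m+n}^{\tau_0}(R)$. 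Finally combine $\alpha_0 \circ \alpha_1^{(iv)} =: \alpha' \in \IA_{m+n}^{\tau_0}(R)$ (a product of two elements of the same group), which gives $\alpha' \circ \rho_0 \circ \phi_0 \circ (\rho_1 \circ \cdots \circ \rho_q) \circ \phi_1' \circ \cdots \circ \phi_q$. It remains only to move $\phi_0$ rightward past $\rho_1 \circ \cdots \circ \rho_q$: applying Corollary \ref{rhopush} repeatedly turns $\phi_0 \circ (\rho_1 \circ \cdots \circ \rho_q)$ into $(\rho_1 \circ \cdots \circ \rho_q) \circ \phi_0'$ with $\phi_0' \in \EA_n^{(\rho_1\circ\cdots\circ\rho_q)(\tau_1)}(R^{[m]})$, which is exactly the claimed index, and relabeling completes the induction.

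The main obstacle, as the sketch above already shows, is entirely a bookkeeping matter: keeping straight which $\IA$- or $\EA$-superscript tuple each transient factor lives in after each conjugation, and in particular using the hypotheses $\rho_i(\tau_i) \le \tau_{i+1}$ at precisely the right moments to collapse two adjacent $\tau_i$-levels into one. There is no deep content beyond Corollaries \ref{alphapush}, \ref{IApermutation}, and \ref{rhopush}; the care needed is in verifying that the monotonicity of the superscripts is preserved throughout and that the final elementary factors $\phi_i'$ end up at the tuple $(\rho_{i+1}\circ\cdots\circ\rho_q)(\tau_{i+1})$ rather than some shifted variant. One should also double-check the base case and the placement of $\phi_q$ (which is never pushed and retains its original level $\tau_{q+1}$, consistent with the empty product $\rho_{q+1}\circ\cdots\circ\rho_q$ acting trivially on $\tau_{q+1}$).
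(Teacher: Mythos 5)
Your overall scheme --- induction on $q$, Corollary \ref{alphapush} to push the $\alpha$'s leftward past the $\phi$'s, Corollary \ref{IApermutation} to conjugate them past the $\rho$'s, and Corollary \ref{rhopush} to push the elementaries rightward --- is the same as the paper's. But there is a genuine gap at precisely the step you dismissed as bookkeeping: the containment $\IA_{m+n}^{\tau_1}(R) \subseteq \IA_{m+n}^{\rho_0(\tau_0)}(R)$ for $\rho_0(\tau_0) \le \tau_1$ is false, and Lemma \ref{strongIA} does not assert any such containment. While $\sigma \le \tau$ does give $A_\tau \subset A_\sigma$, this does not transfer to the $\IA$ groups: by Lemma \ref{characterize}(1), an element of $\IA_{m+n}^\tau(R)$ has $z_k$-components of the form $z_k + x^{-t_k+1}G_k$ with $G_k \in A_\tau$, and the exponent $-t_k+1$ is \emph{more} negative than the $-s_k+1$ permitted in $\IA_{m+n}^\sigma(R)$ when $\sigma \le \tau$. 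Concretely, with $m=0$, $n=1$: the map $z \mapsto z+c$ with $c \in A \setminus xA$ lies in $\IA_1^{(1)}(R)$ but not in $\IA_1^{(0)}(R)=\IA_1(R)$, since it is not the identity modulo $x$. The entire point of Lemma \ref{strongIA} and Corollary \ref{IAreduce} is that lowering the superscript costs an elementary correction: for $\sigma \le \tau$ and $\alpha \in \IA_{m+n}^\tau(R)$ one can only write $\alpha = \beta \circ \tilde{\phi}$ with $\beta \in \IA_{m+n}^\sigma(R)$ and $\tilde{\phi} \in \EA_n^\tau(R^{[m]})$; the factor $\tilde{\phi}$ cannot be discarded.

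The repair (which is the paper's route) is as follows. After obtaining $\phi_0 \circ \alpha_1' = \tilde{\alpha} \circ \phi_0$ with $\tilde{\alpha} \in \IA_{m+n}^{\tau_1}(R)$, conjugate by $\rho_0$ \emph{first}, getting $\rho_0 \circ \tilde{\alpha} = \alpha^{\pprime} \circ \rho_0$ with $\alpha^{\pprime} \in \IA_{m+n}^{\rho_0^{-1}(\tau_1)}(R)$ by Corollary \ref{IApermutation}; then, since $\tau_0 \le \rho_0^{-1}(\tau_1)$, apply Corollary \ref{IAreduce} to write $\alpha^{\pprime} = \beta \circ \tilde{\phi}$ with $\beta \in \IA_{m+n}^{\tau_0}(R)$ and $\tilde{\phi} \in \EA_n^{\rho_0^{-1}(\tau_1)}(R^{[m]})$. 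The leftover $\tilde{\phi}$ must then be pushed back to the right through $\rho_0$ (Corollary \ref{rhopush}) and merged with $\phi_0$ before the pair is pushed past $\rho_1 \circ \cdots \circ \rho_q$. This is also why the lemma's conclusion features modified elementaries $\phi_i'$ rather than mere conjugates of the original $\phi_i$: each $\phi_i'$ absorbs the elementary debris produced by Corollary \ref{IAreduce} at that stage. With this correction inserted, your induction goes through as in the paper.
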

\begin{proof}
We induct on $q$.  If $q=0$, the claim is trivial, so assume $q>0$.  So by the inductive hypothesis, we may assume
$$\alpha _1 \circ \rho _1 \circ \phi _1 \circ \cdots \circ \alpha _ q \circ \rho _q \circ \phi _q=
\alpha _1 ^\prime \circ \rho _1 ^\prime \circ \phi _1 ^\prime \circ \cdots \circ \phi _{q-1} ^\prime \circ \phi _q$$
for some $\alpha _1 ^\prime \in \IA _{m+n} ^{\tau _1}(R)$, $\phi _i ^\prime \in \EA _n ^{(\rho _{i+1} \circ \cdots \circ \rho _q)(\tau _{i+1})}(R^{[m]})$, and $\rho _1 ^\prime = \rho _1 \circ \cdots \circ \rho _q$.  Note that it now suffices to find $\alpha  ^\prime \in \IA _n ^{\tau _0}(R^{[m]})$ and $\phi _{0} ^\prime \in \EA _2 ^{\rho _1 ^\prime(\tau _1)}(R^{[m]})$ such that
$$\alpha _{0}  \circ \rho _{0} \circ \phi _{0} \circ \alpha _1 ^\prime \circ \rho _1 ^\prime  = \alpha  ^\prime \circ (\rho _0 \circ \rho _1 ^\prime) \circ \phi _{0} ^\prime$$

From Corollary \ref{alphapush}, there exists $\tilde{\alpha} \in \IA _{m+n} ^{\tau _1}(R)$ such that $ \phi_0 \circ \alpha _1 ^\prime =\tilde{\alpha} \circ \phi_0 $.  By Corollary \ref{IApermutation}, there exists $\alpha ^\pprime \in \IA _{m+n} ^{\rho _0 ^{-1} (\tau _1)}(R)$ such that  $\rho _0 \circ \tilde{\alpha} = \alpha ^\pprime \circ \rho _0$.  In addition, by Corollary \ref{IAreduce}, since $\tau _0  \leq \rho _0 ^{-1} (\tau _1)$, there exist $\beta \in \IA _{m+n} ^{\tau _0}(R)$ and $\tilde{\phi} \in \EA _n ^{\rho _0 ^{-1}(\tau _1)}(R^{[m]})$ such that $\alpha ^\pprime = \beta \circ \tilde{\phi}$.  Then we have
\begin{equation}
\alpha _{0}  \circ \rho _{0} \circ \phi _{0} \circ \alpha _1 ^\prime \circ \rho _1 ^\prime  = \alpha _0  \circ \rho _0 \circ \tilde{\alpha} \circ \phi _{0} \circ \rho _1 ^\prime 
=\alpha _0 \circ \alpha ^\pprime \circ \rho _0 \circ \phi _{0} \circ \rho _1 ^\prime 
=\alpha _0 \circ \beta \circ \tilde{\phi} \circ \rho _0 \circ \phi _{0} \circ \rho _1 ^\prime 
\label{eqAlphaRhoPhiRho}
\end{equation}

Now by Corollary \ref{rhopush}, there exists $\phi ^\prime \in \EA _2 ^{\tau _1}(R^{[m]})$ such that $\tilde{\phi} \circ \rho _0 = \rho _0 \circ \phi ^\prime$.  Also, there exist $\phi _0 ^\prime \in \EA _2 ^{\rho _1 ^\prime (\tau _1)}(R)$ such that $ (\phi ^\prime \circ \phi _0) \circ \rho _1 ^\prime  = \rho _1 ^\prime  \circ \phi _0 ^\prime$.  Then from \eqref{eqAlphaRhoPhiRho}, we obtain 

\begin{equation*}
\alpha _{0}  \circ \beta \circ \tilde{\phi} \circ \rho _{0} \circ \phi _{0} \circ \alpha _1 ^\prime  \circ \rho _1 ^\prime 
=\alpha _0 \circ \beta \circ \rho _0 \circ (\phi ^\prime \circ \phi _0 ) \circ \rho _1 ^\prime
=\alpha _0 \circ \beta \circ \rho _0 \circ \rho _1 ^\prime \circ \phi _{0} ^\prime 
\end{equation*}
Now we simply set $\alpha ^\prime = \alpha _0 \circ \beta \in \IA _{m+n} ^{\tau _0}(R)$  to achieve the desired result.
\end{proof}

In fact, this same proof gives the following, noting that the hypothesis $\tau _2 - \rho_1(\tau _1) = \delta e_k$ is what implies that the resulting $\Phi$ is elementary:
\begin{corollary}\label{alpharhoalpharho}
Let $A$ be a connected, reduced ring.  Suppose $\tau _1, \tau _2 \in \IN ^n$ , $\alpha _1 \in \IA _{m+n} ^{\tau _1}(R)$, $\alpha _2 \in \IA _{m+n} ^{\tau _2}(R)$, and $\rho _1, \rho _2 \in \GP _2 (S^{[m]})$.  If $\tau _2 - \rho _1(\tau _1) = \delta e_k$ for some $1 \leq k \leq n$ and $\delta \in \IN$, then there exist $\alpha ^\prime \in \IA _{m+n} ^{\tau _1}$, $\rho ^\prime \in \GP _2 (S^{[m]})$, and elementary $\Phi \in \EA _n ^{\rho _2(\tau _2)}(R^{[m]})$ such that
$$ \alpha _1 \circ \rho _1 \circ \alpha _2 \circ \rho _2 = \alpha ^\prime \circ \rho ^\prime \circ \Phi$$
\end{corollary}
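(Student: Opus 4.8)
The plan is to reuse, almost verbatim, the inductive step in the proof of Lemma \ref{NoAlphaNoRho}, specialized to the situation in which the intervening elementary automorphisms $\phi_i$ are taken to be the identity, while additionally tracking the fact that the single factor produced at the end is genuinely \emph{elementary} rather than merely a member of $\EA_n^\tau(R^{[m]})$. Concretely: commute $\alpha_2$ to the left past $\rho_1$ (which replaces the exponent vector $\tau_2$ by $\rho_1^{-1}(\tau_2)$), peel off from the resulting $\IA$-automorphism a single \emph{elementary} factor by invoking the ``moreover'' clause of Corollary \ref{IAreduce}, and then slide that elementary factor back to the right through $\rho_1$ and then $\rho_2$ using Corollary \ref{rhopush}, absorbing the leftover $\IA$-piece into $\alpha_1$.

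In detail, I would first apply Corollary \ref{IApermutation} to $\rho_1$ and $\alpha_2 \in \IA_{m+n}^{\tau_2}(R)$ to obtain $\gamma \in \IA_{m+n}^{\rho_1^{-1}(\tau_2)}(R)$ with $\rho_1 \circ \alpha_2 = \gamma \circ \rho_1$. Next comes the order-theoretic observation: by Definition \ref{permdef}, for fixed $\rho$ the map $\tau \mapsto \rho(\tau)$ is a coordinate permutation followed by a common integer shift, hence is order-preserving and carries a difference of the form $\delta e_k$ to one of the form $\delta e_{k'}$; applying $\rho_1^{-1}$ to $\rho_1(\tau_1) \le \tau_2$ and to $\tau_2 - \rho_1(\tau_1) = \delta e_k$ therefore gives $\tau_1 \le \rho_1^{-1}(\tau_2)$ and $\rho_1^{-1}(\tau_2) - \tau_1 = \delta e_{k'}$ for some $k'$. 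Corollary \ref{IAreduce}, in its stronger form, then yields $\beta \in \IA_{m+n}^{\tau_1}(R)$ and an \emph{elementary} $\tilde\phi \in \EA_n^{\rho_1^{-1}(\tau_2)}(R^{[m]})$ with $\gamma = \beta \circ \tilde\phi$, so that
$$\alpha_1 \circ \rho_1 \circ \alpha_2 \circ \rho_2 = \alpha_1 \circ \gamma \circ \rho_1 \circ \rho_2 = \alpha_1 \circ \beta \circ \tilde\phi \circ \rho_1 \circ \rho_2.$$
Two applications of Corollary \ref{rhopush} --- first to $\tilde\phi$ and $\rho_1$, producing an elementary $\phi' \in \EA_n^{\tau_2}(R^{[m]})$ with $\tilde\phi \circ \rho_1 = \rho_1 \circ \phi'$, then to $\phi'$ and $\rho_2$, producing an elementary $\Phi \in \EA_n^{\rho_2(\tau_2)}(R^{[m]})$ with $\phi' \circ \rho_2 = \rho_2 \circ \Phi$ --- turn the right-hand side into $\alpha_1 \circ \beta \circ \rho_1 \circ \rho_2 \circ \Phi$. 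Setting $\alpha' = \alpha_1 \circ \beta$, which lies in $\IA_{m+n}^{\tau_1}(R)$ since that is a group, and $\rho' = \rho_1 \circ \rho_2 \in \GP_n(S^{[m]})$, gives the claimed factorization $\alpha_1 \circ \rho_1 \circ \alpha_2 \circ \rho_2 = \alpha' \circ \rho' \circ \Phi$.

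The only step that requires any real care --- and the sole point of departure from the proof of Lemma \ref{NoAlphaNoRho} --- is the middle one: checking that transporting the hypothesis $\tau_2 - \rho_1(\tau_1) = \delta e_k$ through $\rho_1^{-1}$ still produces a difference of the special form $\delta e_{k'}$ (and preserves the inequality), which is exactly what makes the ``moreover'' clause of Corollary \ref{IAreduce} applicable and forces $\tilde\phi$, and hence $\Phi$, to be elementary. This is immediate from the explicit description of $\rho(\tau)$ in Definition \ref{permdef}, so I do not anticipate a genuine obstacle; all the remaining manipulations are the same commutation bookkeeping already performed, in greater generality, in Lemma \ref{NoAlphaNoRho}.
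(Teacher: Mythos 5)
Your proof is correct and is exactly the paper's intended argument: the paper justifies this corollary by remarking that the proof of Lemma \ref{NoAlphaNoRho} goes through verbatim with the hypothesis $\tau_2 - \rho_1(\tau_1) = \delta e_k$ activating the ``moreover'' clause of Corollary \ref{IAreduce}, which is precisely your chain of Corollary \ref{IApermutation}, Corollary \ref{IAreduce}, and two applications of Corollary \ref{rhopush}. (One harmless slip: by Definition \ref{permdef}, $\rho(\tau)$ is a coordinate permutation followed by adding the vector $(r_1,\ldots,r_n)$, not a \emph{common} shift; but since any additive shift cancels in differences and preserves the ordering, your key observation that $\rho_1^{-1}$ carries $\delta e_k$ to some $\delta e_{k'}$ and preserves $\leq$ still holds.)
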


We alert the reader to the fact that the next lemma is true only for $n=2$.
\begin{lemma}\label{PhiEAtau}
Assume $A$ is an integral domain of characteristic zero.  Let $\sigma \leq \tau=(t_1,t_2) \in \IN ^2$, let $\Phi \in \EA _2 (S^{[m]})$ be elementary, and let $\omega \in \GA _{m+2}(S)$ such that $\omega (x^{t_1}z_1), \omega(x^{t_2}z_2) \in R^{[m+2]} \setminus xR^{[m+2]}$.  If $(\Phi \circ \omega)(A_{\sigma}) \subset R^{[m+2]}$, then $\Phi \in \EA _2 ^{\tau}(R^{[m]})$.
\end{lemma}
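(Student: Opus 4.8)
The plan is to peel $\Phi$ down to a single elementary move, extract one identity in $R^{[m+2]}$ from the hypothesis, and then kill all $x$‑denominators appearing in $\Phi$ by reducing modulo $x$.

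\emph{Reductions and the key identity.} An elementary automorphism of $S^{[m]}[z_1,z_2]$ fixes one of $z_1,z_2$, so after renumbering I may write $\Phi=(z_1+P,\,z_2)$ with $P=\sum_{j\ge 0}c_j z_2^{\,j}\in S^{[m]}[z_2]$. Since $A_\tau\subseteq A_\sigma$ when $\sigma\le\tau$, the hypothesis already gives $(\Phi\circ\omega)(A_\tau)\subseteq R^{[m+2]}$, so I may as well take $\sigma=\tau$. Write $\omega(z_i)=x^{-t_i}w_i$, so $w_i\in R^{[m+2]}\setminus xR^{[m+2]}$. Running the generators of $A_\tau=R^{[m]}[x^{t_1}z_1,x^{t_2}z_2]$ through $\Phi\circ\omega$ yields $\omega(y_i)=:u_i\in R^{[m+2]}$, $w_2\in R^{[m+2]}$, and $w_1+x^{t_1}\omega(P)\in R^{[m+2]}$; subtracting $w_1$,
$$\Delta:=\sum_{j\ge 0}x^{\,t_1-jt_2}\,\omega(c_j)\,w_2^{\,j}\ \in\ R^{[m+2]}.$$
By Lemma \ref{characterize}(2), $\Phi$ lies in $\EA_2^\tau(R^{[m]})$ exactly when $x^{\,t_1-jt_2}c_j\in R^{[m]}$ for all $j$; since $\omega$ is injective with $\omega(R^{[m]})=R[u_1,\dots,u_m]$, this is equivalent to $x^{\,t_1-jt_2}\omega(c_j)\in R[u_1,\dots,u_m]$ for all $j$.

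\emph{Killing the worst pole.} Suppose not, and let $\mu\ge 1$ be least with $x^{\,\mu+t_1-jt_2}c_j\in R^{[m]}$ for every $j$; set $\gamma_j:=x^{\,\mu+t_1-jt_2}c_j\in R^{[m]}$, so $\gamma_{j_0}\notin xR^{[m]}$ for some $j_0$. Then $x^{\mu}\Delta=\sum_j\omega(\gamma_j)w_2^{\,j}$ lies in $R^{[m+2]}$ and, as $\mu\ge 1$, reduces to $0$ modulo $x$; since $\omega(\gamma_j)$ and $w_2$ already lie in $R^{[m+2]}$, this gives in $A^{[m+2]}=R^{[m+2]}/xR^{[m+2]}$ a relation
$$\sum_{j:\ \gamma_j\notin xR^{[m]}}\overline{\omega(\gamma_j)}\;\overline{w_2}^{\,j}\ =\ 0 ,$$
a polynomial in $\overline{w_2}$ over $A^{[m+2]}$ one of whose coefficients comes from $\gamma_{j_0}\notin xR^{[m]}$. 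The rest of the bookkeeping here (that this really is the minimal pole, that everything stays inside $R^{[m+2]}$ before reduction) runs exactly as in the proof of Theorem \ref{n2}, via Taylor's formula.

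\emph{The crux, and the tame addendum.} What remains — and what I expect to be the only genuine difficulty, and the place where $n=2$ together with $A$ a domain are actually used — is to contradict the displayed relation by showing that $\overline{w_2}$ is transcendental over the subfield of $\mathrm{Frac}\,A^{[m+2]}$ generated over $\mathrm{Frac}\,A$ by $\overline{u_1},\dots,\overline{u_m}$, and that $\overline{\omega(\gamma_{j_0})}$ does not vanish there. The input should be that $(\omega(z_1),\omega(z_2))$ is an $S^{[m]}$‑coordinate system of the \emph{two}-variable ring $S^{[m]}[z_1,z_2]$ (so its Jacobian over $S^{[m]}$ is a unit), combined with $w_1,w_2\notin xR^{[m+2]}$ and the primality of $x$ in $R^{[m+2]}$: these should force the reduction $\overline{w_2}$ to remain algebraically independent of the $\overline{u_i}$, whereas the analogous statement for $n\ge 3$ fails, which is precisely why the Main Theorems must instead hypothesize $\Phi_i\in\EA_n^{\tau_i}(R^{[m]})$. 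Once $\Phi\in\EA_2^\tau(R^{[m]})$ is established, the stably tame refinement is immediate: $\EA_2^\tau(R^{[m]})$ consists of automorphisms tame over $S$, so the conclusion feeds directly into the mechanism of Corollary \ref{alphapush} and Theorem \ref{crucial} already used for the Main Theorems, requiring no new ingredient.
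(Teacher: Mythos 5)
Your setup faithfully reproduces the paper's computation --- the identity $\Delta=\sum_j x^{t_1-jt_2}\omega(c_j)w_2^{\,j}\in R^{[m+2]}$ is exactly the paper's $(\Phi\circ\omega)(x^{t_1}z_1)=\omega(x^{t_1}z_1)+x^{t_1-r}P(\omega(x^{t_2}z_2))$ after subtracting $\omega(x^{t_1}z_1)$, and your translation of membership in $\EA_2^\tau(R^{[m]})$ via Lemma \ref{characterize}(2) is right. But everything up to that point is bookkeeping; the entire content of the lemma is the step you defer with ``should force,'' namely why the relation $\sum_j\overline{\omega(\gamma_j)}\,\overline{w_2}^{\,j}=0$ is impossible. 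The paper disposes of this in one stroke at the level of divisibility by $x$: since $P\in A_\tau[\hat{z}_1]\setminus xA_\tau[\hat{z}_1]$ and $\omega(x^{t_2}z_2)\in R^{[m+2]}\setminus xR^{[m+2]}$, it concludes $P(\omega(x^{t_2}z_2))\notin xR^{[m+2]}$, whence $x^{t_1-r}P(\omega(x^{t_2}z_2))\in R^{[m+2]}$ forces $r\le t_1$. You have not supplied any argument for your substitute claim, and as stated it cannot be supplied: the hypotheses of the lemma do not imply that $\overline{w_2}$ is transcendental over $\mathrm{Frac}(A)(\overline{u_1},\dots,\overline{u_m})$. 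For instance, with $m=0$ and $\tau=(0,1)$, the automorphism $\omega=(z_1,z_2+x^{-1})\in\GA_2(S)$ satisfies $\omega(z_1)=z_1$ and $\omega(xz_2)=xz_2+1$, both in $R^{[2]}\setminus xR^{[2]}$, yet $\overline{w_2}=1$ is a constant. (This example shows the delicacy of the whole step: taking $P(w)=w-1\notin(x)$ gives $P(\omega(xz_2))=xz_2\in xR^{[2]}$, so any correct proof must exploit more about $\omega$ than the two conditions you isolate; in the paper's applications $\omega=\omega_{a+1}$ carries extra structure through Lemma \ref{technicallemma} and the minimality of the $\tau_i$.) So the crux is genuinely missing, and the route you sketch for filling it is not viable from the stated hypotheses alone.

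Two smaller points. First, your ``least $\mu$'' normalization introduces possible cancellation among the terms $x^{t_1-jt_2}\omega(c_j)w_2^{\,j}$ that the paper avoids by packaging $\Phi(z_1)-z_1$ as a single $x^{-r}P(x^{t_2}z_2)$ with $P\notin(x)$; your version is workable but you would still need to rule out the resulting polynomial relation in $\overline{w_2}$, which is the same unproven crux. Second, the closing remarks about a ``stably tame refinement'' are out of place: Lemma \ref{PhiEAtau} asserts nothing about tameness, so that paragraph addresses a statement the lemma does not make.
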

\begin{proof} 
Without loss of generality, assume that $\Phi$ is elementary in $z_1$.  Then setting $\sigma = (s_1,s_2)$, it is clear that $s_2=t_2$.  Write $\Phi = (z_1 + x^{-r}P(x^{t_2}z_2),z_2)$ for some $P \in A_\tau[\hat{z}_1] \setminus xA_\tau[\hat{z}_1]$.  It suffices to show that $r \leq t_1$.  We compute
$$(\Phi \circ \omega)(x^{t_1}z_1)=\omega(x^{t_1}z_1)+x^{t_1-r}P(\omega(x^{t_2}z_2))$$
Since $\sigma \leq \tau$ and $(\Phi \circ \omega)(A_\sigma) \in R^{[m+2]}$, we must have $(\Phi \circ \omega)(x^{t_1}z_1) \in R^{[m+2]}$.  But $\omega (x^{t_1}z_1) \in R^{[m+2]}$ by assumption, so we then have $x^{t_1-r}P(\omega(x^{t_2}z_2)) \in R^{[m+2]}$.  Since $P \notin (x)$ and $\omega (x^{t_2}z_2) \in R^{[m+2]} \setminus xR^{[m+2]}$, we thus must have $r \leq t_1$ as required.
\end{proof}

We remark that if $n=3$, one may find $P \notin (x)$ and $\omega$ with $\omega(x^{t_2}z_2), \omega (x^{t_3}z_3) \in R^{[m+2]} \setminus xR^{[m+2]}$, but $P(\omega(x^{t_2}z_2),\omega(x^{t_3}z_3)) \in xR^{[m+2]}$.  For example, set $\tau = (0,1,2)$, $\omega = (z_1,z_2-\frac{yz_1}{x}, z_3-\frac{2z_2z_1}{x}-\frac{yz_1^2}{x^2}) \in \GA _3(\IC[x,x^{-1}][y])$, and let $P=y(x^{2}z_3)+(xz_2)^2 \in A_{\tau}[\hat{z}_1] \setminus xA_{\tau}[\hat{z}_1]$.  Then one easily checks that $P(\omega(xz_2,x^2z_3))=x^2(yz_3+z_2^2) \in (x^2)\IC[x][y]^{[3]}$.  This type of behavior is a crucial difficulty in extending our result from the $n=2$ case to $n \geq 3$.  Additionally, if we relax our hypotheses on the ring $A$, we create similar difficulties.

\begin{lemma}\label{acnonzero}
Let $\tau = (t_1,t_2) \in \IN ^2$.  Let $\Phi \in \EA _2 ^\tau (R^{[m]})$ be elementary, and let $\beta = (az_1+bx^{t_2-t_1}z_2,dz_2+cx^{t_1-t_2}z_1) \in \GL _2 ^\tau (R^{[m]})$.  
\begin{enumerate}
\item If $\Phi$ is elementary in $z_1$ and either $c=0$ or $d=0$, then there exists $\rho \in \GP _2 ^\tau (R^{[m]})$ and elementary $\Phi ^\prime \in \EA _2 ^\tau (R^{[m]})$ such that $ \Phi \circ \beta = \rho \circ \Phi ^\prime$.
\item If $\Phi$ is elementary in $z_2$ and either $a=0$ or $b=0$, then there exists $\rho \in \GP _2 ^\tau (R^{[m]})$ and elementary $\Phi ^\prime \in \EA _2 ^\tau (R^{[m]})$ such that $ \Phi \circ \beta = \rho \circ \Phi ^\prime$.
\end{enumerate}
\end{lemma}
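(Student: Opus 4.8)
The plan is to reduce everything to part~(1) using the symmetry that exchanges $z_1\leftrightarrow z_2$ and $t_1\leftrightarrow t_2$. Conjugating $\Phi$ and $\beta$ by the coordinate swap turns an elementary in $z_2$ into an elementary in $z_1$, and, reading off the relabelled coefficients, turns the hypothesis ``$a=0$ or $b=0$'' of part~(2) into the hypothesis ``$c=0$ or $d=0$'' of part~(1). So it suffices to prove part~(1).

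Assume then that $\Phi$ is elementary in $z_1$. By Lemma~\ref{characterize}(2) I can write $\Phi=(z_1+x^{-t_1}P(x^{t_2}z_2),z_2)$ for a one-variable polynomial $P\in R^{[m]}[w]$. Since $\beta\in\GL_2^\tau(R^{[m]})$, its associated matrix $M=\bigl(\begin{smallmatrix}a&b\\c&d\end{smallmatrix}\bigr)$ lies in $\GL_2(R^{[m]})$, so $\det M=ad-bc\in (R^{[m]})^{*}$; hence when $c=0$ both $a$ and $d$ are units of $R^{[m]}$, and when $d=0$ both $b$ and $c$ are units. A direct substitution gives
\[
\Phi\circ\beta=\bigl(az_1+bx^{t_2-t_1}z_2+x^{-t_1}P(cx^{t_1}z_1+dx^{t_2}z_2),\ dz_2+cx^{t_1-t_2}z_1\bigr).
\]

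If $c=0$, the second coordinate is just $dz_2$, and I would split off the diagonal map $\rho=(az_1,dz_2)$, which is a generalized permutation with unit entries and is fixed by $\tau$-conjugation, so $\rho\in\GP_2^\tau(R^{[m]})$. Seeking $\Phi'=(z_1+x^{-t_1}P'(x^{t_2}z_2),z_2)$ with $\rho\circ\Phi'=\Phi\circ\beta$, comparing first coordinates reduces to the identity $aP'(w)=bw+P(dw)$, solved by $P'(w)=a^{-1}\bigl(bw+P(dw)\bigr)\in R^{[m]}[w]$ precisely because $a$ is a unit; then $\Phi'\in\EA_2^\tau(R^{[m]})$ again by Lemma~\ref{characterize}(2). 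If instead $d=0$, the second coordinate of $\Phi\circ\beta$ is $cx^{t_1-t_2}z_1$, so I would split off the ``swap-type'' generalized permutation $\rho=(bx^{t_2-t_1}z_2,cx^{t_1-t_2}z_1)\in\GP_2^\tau(R^{[m]})$ (here $b,c$ are units) and look for $\Phi'=(z_1,z_2+x^{-t_2}P'(x^{t_1}z_1))$, elementary in $z_2$; matching coordinates now reduces to $bP'(w)=aw+P(cw)$, solved by $P'(w)=b^{-1}\bigl(aw+P(cw)\bigr)\in R^{[m]}[w]$ since $b$ is a unit.

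The calculations are mechanical; the one genuine point is that the triangularity hypothesis ($c=0$ or $d=0$) is exactly what keeps the argument of $P$ from becoming a true mixture $cx^{t_1}z_1+dx^{t_2}z_2$ of both variables, which is the only situation in which $\Phi\circ\beta$ fails to be a generalized permutation followed by a single elementary. I expect the only things needing care to be (i) confirming that the chosen $\rho$ really lies in $\GP_2^\tau(R^{[m]})$---i.e.\ that the powers $x^{\pm(t_2-t_1)}$ appearing are precisely those produced by $\tau$-conjugating the relevant generalized permutation matrix over $R^{[m]}$---and (ii) confirming that $P'$ has coefficients in $R^{[m]}$ rather than merely in $S^{[m]}$, which is where invertibility of $M$ (hence unit-ness of $a$, resp.\ $b$) is used. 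No deeper obstacle is anticipated.
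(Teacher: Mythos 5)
Your proposal is correct and follows essentially the same route as the paper: write $\Phi=(z_1+x^{-t_1}P(x^{t_2}z_2),z_2)$, compute $\Phi\circ\beta$ directly, and in each triangular case factor off the diagonal map $(az_1,dz_2)$ or the swap $(bx^{t_2-t_1}z_2,cx^{t_1-t_2}z_1)$, leaving an elementary whose polynomial is $a^{-1}(bw+P(dw))$ or $b^{-1}(aw+P(cw))$. Your explicit observations that invertibility of $\beta$ forces the relevant entries to be units and that $P'$ therefore has coefficients in $R^{[m]}$ are correct and are used implicitly in the paper's computation.
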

\begin{proof}
Suppose $\Phi$ is elementary in $z_1$ and write $\Phi = (z_1+ x^{-t_1}P(x^{t_2}z_2),z_2)$.  First, suppose $c=0$, so
$$\Phi \circ \beta = (az_1+bx^{t_2-t_1}z_2+x^{-t_1}P(dx^{t_2}z_2),dz_2) = (az_1,dz_2) \circ (z_1+\frac{1}{a}x^{-t_1}(bx^{t_2}z_2+P(dx^{t_2}z_2)),z_2)$$
If instead $d=0$, then
$$\Phi \circ \beta = (az_1+bx^{t_2-t_1}z_2+x^{-t_1}P(cx^{t_1}z_1),cx^{t_1-t_2}z_1) = (bx^{t_2-t_1}z_2,cx^{t_1-t_2}z_1) \circ (z_1,z_2+\frac{1}{b}x^{-t_2}(ax^{t_1}z_1+P(cx^{t_1}z_1)),z_2)$$
These are both precisely in the desired form.  The case where $\Phi$ is elementary in $z_2$ follows similarly.
\end{proof}

We conclude with two technical lemmas.

\begin{lemma}\label{technicallemma}
Suppose $A$ is a connected, reduced ring.  Let $\tau _0, \ldots, \tau _q \in \IN ^n$, $\Phi _0, \ldots, \Phi _q \in \EA _n(S^{[m]})$ be elementaries, $\alpha _i \in \IA_{m+n} ^{\tau _i}(R)$, and $\rho _i \in \GP _n (S^{[m]})$.  Set
$$\omega _i = \alpha _i \circ \rho _i \circ \Phi _i \circ \cdots \circ \alpha _q \circ \rho _q \circ \Phi _q$$ 
Also set $\Phi _i ^\prime = \rho _i \circ \Phi _i \circ \rho _i ^{-1}$.  Then the following conditions are equivalent:
\begin{enumerate}
\item Each $\tau _i \in \IN ^n$ is minimal such that $\omega _i (A_{\tau _i}) \subset R^{[m+n]}$
\item Each $\tau _i \in \IN ^n$ is minimal such that $(\Phi _i \circ \omega _{i+1})(A_{\rho _i (\tau _i)}) \subset R^{[m+n]}$.
\item Each $\tau _i \in \IN ^n$ is minimal such that $(\rho _i \circ \Phi _i \circ \omega _{i+1})(A_{\tau _i}) \subset R^{[m+n]}$.
\item Each $\tau _i \in \IN ^n$ is minimal such that $(\Phi _i ^\prime \circ \rho _i \circ \omega _{i+1})(A_{\tau _i}) \subset R^{[m+n]}$.
\end{enumerate}

Moreover, if the above are satisfied, then, writing $\tau _i = (t_{i,1},\ldots, t_{i,n})$,
\begin{enumerate}
\item If $\Phi _i ^\prime $ is elementary in $z_j$, then $(\rho _i \circ \omega _{i+1})(x^{t_{i,k}}z_k) \in R^{[m+n]}\setminus xR^{[m+n]}$ for all $k \neq j$.
\item If $\Phi _i$ is elementary in $z_j$, then $\rho _i (\tau _i)-\tau _{i+1} = \delta _i e_j$  for some $\delta _i \in \IZ$ (recall $e_j=(0,\ldots,0,1,0,\ldots,0)$).
\end{enumerate}
\end{lemma}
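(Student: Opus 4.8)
I would treat the first part of the lemma --- the equivalence of (1)--(4) --- as pure bookkeeping, resting on three facts: the recursion $\omega_i = \alpha_i \circ \rho_i \circ \Phi_i \circ \omega_{i+1}$ (with $\omega_{q+1} = \id$); the identity $\rho_i \circ \Phi_i = \Phi_i' \circ \rho_i$, immediate from the definition $\Phi_i' := \rho_i \circ \Phi_i \circ \rho_i^{-1}$; and the two invariance properties $\alpha_i(A_{\tau_i}) = A_{\tau_i}$ (Lemma~\ref{taylor}(1), since $\IA_{m+n}^{\tau_i}(R) \subset \GA_n^{\tau_i}(R^{[m]})$) and $\rho_i(A_\tau) = A_{\rho_i(\tau)}$ (the lemma immediately preceding the present one). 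Recalling that automorphisms here compose so that $(\phi \circ \psi)(B) = \psi(\phi(B))$ on a subring $B$, I would chase these facts to obtain, for every $\tau_i \in \IN^n$,
\[
\omega_i(A_{\tau_i}) = (\rho_i \circ \Phi_i \circ \omega_{i+1})(A_{\tau_i}) = (\Phi_i \circ \omega_{i+1})(A_{\rho_i(\tau_i)}) = (\Phi_i' \circ \rho_i \circ \omega_{i+1})(A_{\tau_i}),
\]
where the first step absorbs $\alpha_i$ via $\alpha_i(A_{\tau_i}) = A_{\tau_i}$, the second transports $\rho_i$ via $\rho_i(A_{\tau_i}) = A_{\rho_i(\tau_i)}$, and the third rewrites $\rho_i \circ \Phi_i$ as $\Phi_i' \circ \rho_i$. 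Since the four subrings of $S^{[m+n]}$ displayed here are literally equal for each $\tau_i$, the clause ``$\subset R^{[m+n]}$'' cuts out the same subset of $\IN^n$ in (1)--(4); the minimal elements therefore coincide, which is the desired equivalence.

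\textbf{Clause (a).} Assuming the conditions hold and $\Phi_i'$ is elementary in $z_j$, fix $k \neq j$. Since $\Phi_i'$ fixes $z_k$, applying the automorphism $\Phi_i' \circ \rho_i \circ \omega_{i+1}$ to $x^{t_{i,k}} z_k$ yields $(\rho_i \circ \omega_{i+1})(x^{t_{i,k}} z_k)$, which lies in $R^{[m+n]}$ because $x^{t_{i,k}} z_k \in A_{\tau_i}$ and condition (4) holds. If it lay in $x R^{[m+n]}$, then $(\rho_i \circ \omega_{i+1})(x^{t_{i,k}-1} z_k) \in R^{[m+n]}$ as well, and --- $\Phi_i'$ still fixing $z_k$ --- the vector $\tau_i - e_k$ would again satisfy the membership required in condition (4), contradicting the minimality of $\tau_i$ provided $t_{i,k} \geq 1$. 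The boundary case $t_{i,k} = 0$ needs a separate argument; there I would use directly that $\rho_i \circ \omega_{i+1}$ is an automorphism and track how it moves the $z_l$ with $t_{i,l} = 0$.

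\textbf{Clause (b).} Assuming $\Phi_i$ is elementary in $z_j$, I would use condition (2) at index $i$ together with condition (1) at index $i+1$. Fix $l \neq j$; since $\Phi_i$ fixes $z_l$, applying $\Phi_i \circ \omega_{i+1}$ to $x^s z_l$ yields $\omega_{i+1}(x^s z_l)$, so the membership condition pinning down the $l$-th coordinate of $\rho_i(\tau_i)$ in (2) is exactly the membership condition pinning down the $l$-th coordinate of $\tau_{i+1}$ in (1). As each of these minimization problems decouples across coordinates, this forces $\rho_i(\tau_i)_l = \tau_{i+1,l}$ for all $l \neq j$, i.e.\ $\rho_i(\tau_i) - \tau_{i+1} = \delta_i e_j$ for some $\delta_i \in \IZ$.

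\textbf{Main obstacle.} Pushing the $\alpha_i$'s and $\rho_i$'s around is routine given the earlier lemmas and the invariance statements they supply; the genuine work is in the ``moreover'' clauses, and in particular in making the minimality arguments airtight --- verifying that the relevant membership conditions truly decouple over the coordinates of $\tau_i$, and handling the boundary behaviour when a coordinate of $\tau_i$ vanishes or when the twist $\rho_i(-)$ carries $\tau_i$ outside $\IN^n$, so that the ranges of the two minimizations in (1) and (2) have to be reconciled carefully.
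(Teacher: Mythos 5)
Your proposal follows the paper's proof essentially step for step: the equivalence of (1)--(4) via the three facts $\alpha_i(A_{\tau_i})=A_{\tau_i}$, $\rho_i(A_{\tau_i})=A_{\rho_i(\tau_i)}$, and the definition of $\Phi_i^\prime$; clause (a) by applying $\Phi_i^\prime(z_k)=z_k$ together with the minimality in (4); and clause (b) by matching, coordinate by coordinate for $l \neq j$, the minimality in (2) at index $i$ with the minimality in (1) at index $i+1$. The boundary case $t_{i,k}=0$ that you flag in clause (a) is not treated separately in the paper either --- there the conclusion is simply declared to follow ``immediately'' from (4) --- so relative to the paper's own argument you have omitted nothing, and you have in fact been more explicit about exactly where the minimality argument is thin.
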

\begin{proof}
The equivalence of (2) and (3) is immediate from the fact that $\rho _i (A_{\tau _i}) = A_{\rho _i(\tau _i)}$.  Since $\alpha _i \in \IA_{m+n} ^{\tau _i} (R) \subset \GA _{m+n} ^{\tau _i}(R)$, we have $\alpha _i (A_{\tau _i})= A_{\tau _i}$ and thus $\omega _i (A_{\tau _i})=(\rho _i \circ \Phi _i \circ \omega _{i+1})(A_{\tau _i})$, giving the equivalence of (1) and (3).  The equivalence of (3) and (4) follows immediately from the definition of $\Phi _i ^\prime$.

Suppose now that the four conditions are satisfied.  Suppose also that $\Phi _i ^\prime $ is elementary in $z_j$, so that $\Phi _i ^\prime(z_k)=z_k$ for $k \neq j$.  Then (4) immediately implies $(\rho _i \circ \omega _{i+1})(x^{t_{i,k}}z_k)=(\Phi _i ^\prime \circ \rho _i \circ \omega _{i+1})(x^{t_{i,k}}z_k) \in R^{[m+n]} \setminus xR^{[m+n]}$.  
Now suppose (perhaps instead) that $\Phi _i$ is elementary in $z_j$.  Then $(\Phi _i \circ \omega _{i+1})(x^sz_k) = \omega _{i+1}(x^sz_k)$ for $k \neq j$.  The minimal $s$ such that this lies in $R^{[m+n]}$ is  precisely $t_{i+1,k}$, so we see from (2) that $\rho _i (\tau _i)=\tau _{i+1}+\delta _ie_j$ for some $\delta _i \in \IZ$. 
\end{proof}

We make the following definition to aid in the proof of the Lemma \ref{onlyOnePhi}. 

\begin{definition}
Let $\tau=(t_1,\ldots,t_n) \in \IN$. Note that as in \eqref{shortexact}, we have $\GA_n (A^{[m]}) \leq \GA_n (R^{[m]})$.  Then, we can consider $\EA_n(A^{[m]}) \leq \GA_n(R^{[m]})$, and define $$\EA_n ^\tau (A^{[m]}) := \{ (x^{-t_1}z_1,\ldots,x^{-t_n}z_n) \circ \phi \circ (x^{t_1}z_1,\ldots,x^{t_n}z_n)\ \big |\ \phi \in \EA _n ^\tau (A^{[m]})\} \leq \GA_n ^\tau (R^{[m]})$$
Given $\Phi \in \EA _n ^\tau (R^{[m]})$, we will denote its image under the natural quotient as $\bar{\Phi} \in \EA _n ^\tau (A^{[m]})$.

We can define other subgroups such as $\GL _2 ^\tau (A^{[m]})$ in a similar way.
\end{definition}

\begin{lemma}\label{eabar}
Let $\tau \in \IN$, and let $\Phi _1, \ldots, \Phi _q \in \EA _n ^\tau (R^{[m]})$.  Then there exist $\alpha \in \IA _n ^\tau (R^{[m]})$ and $\tilde{\Phi}_1, \ldots, \tilde{\Phi} _q \in \EA _n ^\tau(A^{[m]})$ such that $\Phi _1 \circ \cdots \circ \Phi _q = \alpha \circ \tilde{\Phi} _1 \circ \cdots \circ \tilde{\Phi} _q$.
\end{lemma}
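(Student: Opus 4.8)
The plan is to reduce the statement to the split exact sequence \eqref{shortexact} — in the version for the reduction $R^{[m]} \to A^{[m]}$ rather than $R \to A$ — by undoing the $\tau$-conjugation. First I would record that conjugation by the generalized permutation $\nu := (x^{t_1}z_1, \ldots, x^{t_n}z_n) \in \GP_n(S^{[m]})$, i.e.\ the map $\phi \mapsto \phi^\tau$, is a group automorphism of $\GA_{m+n}(S)$, and that by the very definitions of the groups involved it restricts to bijections $\EA_n(R^{[m]}) \to \EA_n^\tau(R^{[m]})$, $\EA_n(A^{[m]}) \to \EA_n^\tau(A^{[m]})$, and $\IA_n(R^{[m]}) \to \IA_n^\tau(R^{[m]})$. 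In particular each $\Phi_i$ can be written as $\Phi_i = \Psi_i^\tau$ for a (unique) $\Psi_i \in \EA_n(R^{[m]})$, so that it suffices to produce the desired factorization for $\Psi_1 \circ \cdots \circ \Psi_q$ inside $\GA_n(R^{[m]})$ and then apply $\phi \mapsto \phi^\tau$ to it.

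Working one level down, in $\GA_n(R^{[m]})$: the reduction-mod-$x$ map $r \colon \GA_n(R^{[m]}) \to \GA_n(A^{[m]})$ is a surjection with kernel $\IA_n(R^{[m]})$, split by the extension-of-scalars inclusion $\iota \colon \GA_n(A^{[m]}) \hookrightarrow \GA_n(R^{[m]})$, and it visibly carries elementaries to elementaries, so that $r(\EA_n(R^{[m]})) = \EA_n(A^{[m]})$ and $\iota(\EA_n(A^{[m]})) \subseteq \EA_n(R^{[m]})$. I would then set $\Psi := \Psi_1 \circ \cdots \circ \Psi_q \in \EA_n(R^{[m]})$, $\bar\Psi_i := \iota(r(\Psi_i)) \in \EA_n(A^{[m]})$, $\bar\Psi := \bar\Psi_1 \circ \cdots \circ \bar\Psi_q = \iota(r(\Psi))$, and $\beta := \Psi \circ \bar\Psi^{-1}$. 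Since $\iota$ and $r$ are homomorphisms with $r \circ \iota = \id$, one gets $r(\beta) = \id$, i.e.\ $\beta \in \IA_n(R^{[m]})$, and by construction $\Psi_1 \circ \cdots \circ \Psi_q = \beta \circ \bar\Psi_1 \circ \cdots \circ \bar\Psi_q$.

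Applying the homomorphism $\phi \mapsto \phi^\tau$ to this identity and setting $\alpha := \beta^\tau$ and $\tilde\Phi_i := \bar\Psi_i^\tau$, the observations of the first paragraph give $\alpha \in \IA_n^\tau(R^{[m]})$ and $\tilde\Phi_i \in \EA_n^\tau(A^{[m]})$, while $\Phi_1 \circ \cdots \circ \Phi_q = (\Psi_1 \circ \cdots \circ \Psi_q)^\tau = \alpha \circ \tilde\Phi_1 \circ \cdots \circ \tilde\Phi_q$, which is the assertion. One should also check that this $\tilde\Phi_i = \bar\Psi_i^\tau$ coincides with the image $\bar\Phi_i$ of $\Phi_i$ under the natural quotient, but this is immediate from the compatibility of $\cdot^\tau$ with reduction modulo $x$.

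I do not anticipate a genuine obstacle: the lemma is essentially a bookkeeping consequence of the split exact sequence \eqref{shortexact}, and the only point requiring care is keeping the $\tau$-conjugation and the reduction modulo $x$ compatible (which is exactly why it is cleanest to pass to the un-conjugated picture). If one prefers to stay inside $\GA_n^\tau(R^{[m]})$, an induction on $q$ works equally well: the case $q=1$ amounts to noting $\Phi_1 \circ \bar\Phi_1^{-1} \in \IA_n^\tau(R^{[m]})$, and the inductive step uses the normality $\IA_n^\tau(R^{[m]}) \triangleleft \GA_n^\tau(R^{[m]})$ (a special case of Lemma \ref{alphalemma}) to slide the inductively obtained $\IA$-factor leftward past $\tilde\Phi_1$.
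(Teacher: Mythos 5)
Your proposal is correct and rests on the same key observation as the paper's proof (that $\Phi\circ\bar\Phi^{-1}$ lies in the $\IA$-subgroup, i.e.\ that the reduction modulo $x$ splits); your main write-up just packages this globally via the splitting $\iota$ and retraction $r$ rather than inductively, and your closing paragraph reproduces the paper's argument verbatim (peel off one $\IA$-factor per elementary and slide it left using Corollary \ref{alphapush}/normality).
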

\begin{proof}
The key observation is that if $\Phi \in \EA_n ^\tau (R^{[m]})$, then $\Phi \circ \overline{\Phi} ^{-1} \in \IA _n ^\tau (R^{[m]})$.  Then Corollary \ref{alphapush} and a quick induction suffice to prove the lemma.
\end{proof}

\begin{lemma} \label{onlyOnePhi}
Suppose $A$ is an integral domain.  Let $\tau =(t_1,t_2)\in \IN ^2$, let $\Phi _1, \ldots, \Phi _q \in \EA _2 ^\tau (R^{[m]})$ be elementaries, and let $\omega \in \GA_2(S^{[m]})$ .
Assume that
\begin{enumerate}
\item Either $\omega (x^{t_1}z_1) \in xR^{[m+2]}$ and $\omega(x^{t_2}z_2) \in R^{[m+2]} \setminus xR^{[m+2]}$, or $\omega(x^{t_2}z_2) \in xR^{[m+2]}$ and $\omega(x^{t_1}z_1) \in R^{[m+2]}\setminus xR^{[m+2]}$.
\item Setting $\omega _i = \Phi _i \circ \cdots \circ \Phi _q \circ \omega$, $\omega _i (x^{t_1}z_1), \omega _i (x^{t_2}z_2) \in R^{[m+2]} \setminus xR^{[m+2]}$ for $1<i \leq q$
\item $\omega _1 (x^{t_1}z_1) \in xR^{[m+2]}$
\end{enumerate}
Then either all $\Phi _i$ are elementary in the same variable, or there exists $\alpha \in \IA _{2} ^{\tau}(R^{[m]})$, $\rho \in \GP _2 ^\tau (R^{[m]})$ and elementary $\Phi \in \EA _2 ^\tau (R^{[m]}) \cap \GL _2 ^\tau(R^{[m]})$ such that $\Phi _1 \circ \cdots \circ \Phi _q =  \alpha \circ \rho \circ \Phi$.  
\end{lemma}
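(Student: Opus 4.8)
The plan is to induct on $q$, peeling off $\Phi_1$ from the left and keeping track of how the hypotheses propagate. The base case $q=1$ is essentially vacuous: a single elementary $\Phi_1 \in \EA_2^\tau(R^{[m]})$ is elementary in one variable, so the first alternative holds. For the inductive step, suppose $q > 1$ and that not all $\Phi_i$ are elementary in the same variable; I want to produce the factorization $\alpha \circ \rho \circ \Phi$. The idea is to isolate the \emph{first} index $j \geq 2$ at which $\Phi_j$ is elementary in a different variable than $\Phi_{j-1}$ (equivalently, a place where two consecutive elementaries act on different coordinates), and to use Lemma \ref{PhiEAtau} together with hypotheses (2) and (3) to force the "outer" elementaries $\Phi_1, \ldots, \Phi_{j-1}$ to actually lie in $\GL_2^\tau(R^{[m]})$ — i.e. to have trivial polynomial part once we are this deep in the composition. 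Here is the mechanism: since all the $\Phi_i$ for $1 < i \le q$ already send $x^{t_1}z_1, x^{t_2}z_2$ into $R^{[m+2]} \setminus xR^{[m+2]}$ by (2), but $\omega_1(x^{t_1}z_1) \in xR^{[m+2]}$ by (3), the single application of $\Phi_1$ must be responsible for lowering the $x$-order, and this only happens when $\Phi_1$ is elementary in $z_1$ with its polynomial part evaluated at $\omega_2(x^{t_2}z_2)$ contributing a term of strictly positive $x$-order that cancels $\omega_2(x^{t_1}z_1)$ modulo $x$.

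The key step is then to apply Lemma \ref{PhiEAtau} with $\omega$ replaced by $\omega_2 = \Phi_2 \circ \cdots \circ \Phi_q \circ \omega$: since $\omega_2(x^{t_1}z_1), \omega_2(x^{t_2}z_2) \in R^{[m+2]} \setminus xR^{[m+2]}$ and $(\Phi_1 \circ \omega_2)(A_\sigma) \subset R^{[m+2]}$ for a suitable $\sigma \le \tau$, we recover that $\Phi_1$ is genuinely in $\EA_2^\tau(R^{[m]})$ — but more is true. I would argue that if $\Phi_1$ were elementary in the \emph{same} variable as every subsequent $\Phi_i$ that has a nontrivial polynomial part, we could merge it and reduce $q$; so the obstruction is precisely when $\Phi_1$ is elementary in, say, $z_1$ while some later $\Phi_i$ is elementary in $z_2$. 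At that point I combine Lemma \ref{acnonzero} and Lemma \ref{PhiEAtau}: the cancellation forced by (3) pins down the polynomial part of $\Phi_1$ to be \emph{linear} (any higher-degree term would survive the substitution $\omega_2(x^{t_2}z_2) \notin xR^{[m+2]}$ and obstruct membership in $R^{[m+2]}$, since $A$ is a domain so there are no zero divisors to save us — this is exactly the $n=2$ phenomenon flagged in the remark after Lemma \ref{PhiEAtau}). Hence $\Phi_1 \in \EA_2^\tau(R^{[m]}) \cap \GL_2^\tau(R^{[m]})$, and in fact every "outer" elementary up to the first variable-switch is linear by the same argument applied inductively. Using Lemma \ref{eabar} to normalize these linear pieces over $A^{[m]}$, then Lemma \ref{acnonzero} to push the one remaining genuine elementary past the general-linear part, absorbing the resulting $\IA$-part via Corollary \ref{alphapush}, yields the claimed form $\alpha \circ \rho \circ \Phi$.

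The main obstacle I anticipate is the bookkeeping in identifying which of the $\Phi_i$ get absorbed into $\alpha$, which into $\rho$, and which becomes the single distinguished $\Phi$ — the statement only promises \emph{one} surviving elementary, so I must show that once the first variable-switch is located, everything \emph{after} it also collapses, not just everything before. This requires re-running the order-counting argument from the \emph{right}: hypothesis (1) says $\omega$ itself already has one of $x^{t_i}z_i$ dropping order, so the composition $\Phi_2 \circ \cdots \circ \Phi_q$ must \emph{raise} it back (by (2)), and a symmetric application of Lemma \ref{PhiEAtau} — now reading the composition as (stuff)$\circ \omega$ with the stuff being the tail — forces the tail elementaries past the switch to be linear as well. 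The delicate point is that Lemma \ref{PhiEAtau} is stated for a single leading elementary, so I will likely need to apply it repeatedly, each time updating $\omega$, and verify at each stage that the hypothesis "$\omega(x^{t_i}z_i) \in R^{[m+2]} \setminus xR^{[m+2]}$" is maintained — which is exactly what hypothesis (2) is engineered to guarantee. Once all but one elementary is shown to be linear, assembling the final factorization is routine via Lemmas \ref{eabar}, \ref{acnonzero} and Corollary \ref{alphapush}.
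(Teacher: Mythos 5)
There is a genuine gap at the heart of your argument: the mechanism you propose for forcing linearity does not work. You claim that hypothesis (3) ``pins down the polynomial part of $\Phi_1$ to be linear'' because ``any higher-degree term would survive the substitution $\omega_2(x^{t_2}z_2)$ and obstruct membership in $R^{[m+2]}$.'' But membership in $R^{[m+2]}$ is never in doubt here (each $\Phi_i\in\EA_2^\tau(R^{[m]})$ already preserves $A_\tau\subset R^{[m+2]}$); the content of (3) is only the congruence $\overline{F_2}+P_1(\overline{G_2})=0$ modulo $x$, where $F_2=\omega_2(x^{t_1}z_1)$ and $G_2=\omega_2(x^{t_2}z_2)$. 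That congruence is perfectly consistent with $P_1$ of high degree --- it merely forces $\deg\overline{F_2}=(\deg P_1)(\deg\overline{G_2})$. Likewise, Lemma \ref{PhiEAtau} cannot help you here: its conclusion is only a bound $r\le t_1$ on the exponent of $x$, i.e.\ that $\Phi_1\in\EA_2^\tau(R^{[m]})$, which is already a hypothesis of the present lemma; it says nothing about $\deg P_1$. Your ``order-counting'' is tracking $x$-adic order, but hypothesis (2) already fixes all those orders to be zero for $i>1$, so there is no further information to extract from that invariant.

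The missing idea is a different invariant: the polynomial degrees of the reductions $\overline{F_i}=\overline{\omega_i(x^{t_1}z_1)}$ and $\overline{G_i}=\overline{\omega_i(x^{t_2}z_2)}$. The paper first normalizes via Lemma \ref{eabar} so that each $\Phi_i$ has polynomial part over $A^{[m]}$, then proves by induction \emph{from the right} (its Claim 18) that a nonlinear elementary in $z_1$ forces $\deg\overline{F_i}>\deg\overline{G_i}$ and a nonlinear elementary in $z_2$ forces the reverse inequality. Combined with $\deg\overline{F_2}=(\deg P_1)(\deg\overline{G_2})$ this first shows $\Phi_2$ is linear (not $P_1$!), and then a separate determinant argument --- collecting the maximal initial block $\beta=\Phi_2'\circ\cdots\circ\Phi_b'$ of linear maps and showing that a subsequent nonlinear $\Phi_{b+1}'$ would force $\det\beta=0$ --- shows \emph{all} of $\Phi_2,\ldots,\Phi_q$ are linear; only then do hypotheses (1) and (3) force $P_1$ linear as well, and Lemma \ref{acnonzero} assembles $\rho\circ\Phi$. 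Note also that your intended final picture (``everything before the first variable-switch is linear, one genuine elementary survives after it'') is not what the lemma asserts: in the second alternative the entire composition is, modulo the $\IA$-part, a single \emph{linear} elementary times a generalized permutation, so every $\Phi_i$ must be shown linear, not all but one.
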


\begin{proof}
Note that it suffices to assume that $\Phi _i$ and $\Phi _{i+1}$ are not elementary in the same variable for each $i$, and we may assume $q \geq 2$.  Moreover, by Lemma \ref{eabar}, we may write $\Phi _1 \circ \cdots \circ \Phi _q = \alpha \circ \tilde{\Phi} _1 \circ \cdots \circ \tilde{\Phi} _q$, for some $\tilde{\Phi} _i \in \EA _2 ^\tau (A^{[m]})$ and $\alpha \in \IA_2 ^\tau (R^{[m]})$.  So without loss of generality, it suffices to assume $\alpha = \id$ and each $\Phi _i \in \EA _2 ^\tau (A^{[m]})$.  We thus write, for each $1 \leq i \leq q$, (assuming $\Phi _i$ is elementary in $z_1$) $\Phi _i = (z_1+x^{-t_1}P_i(x^{t_2}z_2),z_2)$ for some $P_i \in A^{[m]}[x^{t_2}z_2] \subset A _\tau[\hat{z}_1]$.

By assumption 2, for $i>1$ we can write $\omega _i = (x^{-t_1}F_i, x^{-t_2}G_i)$ for some $F_i,G_i \in R^{[m+2]} \setminus xR^{[m+2]}$.  Given $Q \in R^{[m]}[z_1,z_2]$, we will use $\overline{Q}$ to denote its image (under the quotient map modulo $x$) in $A^{[m]}[z_1,z_2]$.  Thus, we can interpret assumption 2 as $\overline{F_i} \neq 0$ and $\overline{G_i} \neq 0$ for $1<i\leq q$.  We inductively show the following claim:

\begin{claim}\label{degreeFGclaim}
For each $i>1$, there exist $\Phi _{i} ^\prime, \ldots, \Phi _q ^\prime \in \EA _2 ^\tau(A^{[m]})$ and $\rho \in \GP _2 ^\tau (R^{[m]})$ such that
\begin{enumerate}
\item $\Phi _i \circ \cdots \circ \Phi _q = \rho \circ \Phi _{i}^\prime \circ \cdots \circ \Phi _q ^\prime$
\item Letting $\omega _i ^\prime = \Phi _i ^\prime \circ \cdots \circ \Phi _q ^\prime \circ \omega$ and setting $\omega _i ^\prime(x^{t_1}z_1) = F_i ^\prime$ and $\omega _i ^\prime (x^{t_2}z_2) = G_i ^\prime$, then
\begin{enumerate}
\item If $\Phi _i ^\prime$ is nonlinear and elementary in $z_1$, then $\deg \overline{F_i ^\prime} > \deg \overline{G_i ^\prime}$
\item If $\Phi _i ^\prime$ is nonlinear and elementary in $z_2$, then $\deg \overline{F_i ^\prime} < \deg \overline{G_i ^\prime}$
\end{enumerate}
\end{enumerate}
\end{claim}
Let us first see how this claim completes the lemma.  Applying the claim for $i=2$, we obtain  
\begin{align*}
\Phi _1 \circ \cdots \circ \Phi _q &= \Phi _1 \circ \rho \circ \Phi _2 ^\prime \circ \cdots \circ \Phi _q ^\prime = \rho \circ \Phi _1 ^\prime \circ \Phi _2 ^\prime \circ \cdots \circ \Phi _q ^\prime
\end{align*}
with the final equality coming from Corollary \ref{rhopush}.  Here, each $\Phi _i ^{\prime} \in\EA _2 ^\tau (A^{[m]})$.  

Note that it suffices to assume $\rho = \id$.  Without loss of generality, assume $\Phi _1 ^\prime$ is elementary in $z_1$ and $\Phi _2 ^\prime$ is elementary in $z_2$.  Then we compute
\begin{align*}
\omega _1 (x^{t_1}z_1) &= (\Phi _1 ^\prime \circ \omega _2 ^\prime)(x^{t_1}z_1) = \omega _2 ^\prime (x^{t_1}z_1+P_1(x^{t_2}z_2)) = F_2 ^\prime + P_1(G_2 ^\prime)
\end{align*}

But assumption 3 implies that $\overline{F_2 ^\prime}+P_1(\overline{G_2 ^\prime}) \equiv 0$, and thus $\deg \overline{F_2 ^\prime} = (\deg P_1 )(\deg \overline{G_2 ^\prime})$.  Since the claim yields that if $\Phi _2 ^\prime$ is nonlinear, then $\deg \overline{F_2} ^\prime < \deg \overline{G_2 ^\prime}$, we must have that $\Phi _2 ^\prime$ is linear.  Let $b\geq 2$ be minimal such that $\Phi _{b+1} ^\prime$ is non-linear (and thus $\Phi _2, \ldots, \Phi _b$ are all linear).  We will derive a contradiction, showing no such $b$ exists, in which case $\Phi _2 ^\prime, \ldots, \Phi _q ^\prime$ are all linear.

Set $\beta = \Phi _2 ^\prime \circ \cdots \circ \Phi _b ^\prime = (\beta _{11}z_1+\beta_{12}x^{t_2-t_1}z_2,\beta_{22}z_2+\beta_{21}x^{t_1-t_2}z_1) \in \GL _2 ^\tau (A^{[m]})$ (for some $\beta _{ij} \in A^{[m]}$).  Note that by Lemma \ref{acnonzero}, if $\beta _{21}=0$ or $\beta _{22}=0$, we may (absorbing a resulting permutation into $\rho$) replace $\beta$ by the identity.  Thus, we assume without loss of generality that  $\beta _{21} \neq 0$ and $\beta _{22} \neq 0$.

But then we have 
\begin{align}
F_2^\prime &= \omega _2 ^\prime (x^{t_1}z_1) = \left( \beta \circ  \omega _{b+1} ^\prime\right)(x^{t_1}z_1) = \omega _{b+1} ^\prime (\beta _{11}x^{t_1}z_1+\beta _{12} x^{t_2}z_2) = \beta _{11}F_{b+1} ^\prime + \beta _{12} G_{b+1} ^\prime \notag \\
G_2 ^\prime &= \omega _2 ^\prime (x^{t_2}z_2) = \left( \beta \circ  \omega _{b+1} ^\prime\right)(x^{t_2}z_2) = \omega _{b+1} ^\prime (\beta _{22}x^{t_2}z_2+\beta _{21} x^{t_1}z_1) = \beta _{21}F_{b+1} ^\prime + \beta _{22} G_{b+1} ^\prime \label{F2G2}
\end{align}
Thus, since $\overline{F_2^\prime}+P_1(\overline{G_2^\prime})=0$, we have from \eqref{F2G2}
\begin{equation}
\beta _{11}\overline{F_{b+1}^\prime}+\beta _{12}\overline{G_{b+1}^\prime}+P_1(\beta _{21}\overline{F_{b+1}^\prime}+\beta _{22}\overline{G_{b+1}^\prime}) = 0 \label{FGP1}
\end{equation}

Since $\Phi _{b+1} ^\prime$ is nonlinear, we must have (by the claim) $\deg {\overline{F_{b+1} ^\prime}} \neq \deg \overline{G_{b+1}^\prime}$; then since $\beta_{21} \neq 0$ and $\beta _{22} \neq 0$, from \eqref{FGP1} we see
$$\max \left\{\deg \overline{F_{b+1}^\prime}, \deg \overline{G_{b+1}^\prime}\right\} \geq \deg\left(\beta _{11}\overline{F_{b+1}^\prime}+\beta _{12}\overline{G_{b+1}^\prime}\right) = (\deg P_1) \max \left\{\deg \overline{F_{b+1}^\prime}, \deg \overline{G_{b+1}^\prime} \right\}$$

Thus we must have $\deg P_1=1$.  Let $P_1(z)=\mu z$ for some $\mu \in A^{[m]}$.  Then (again since $\deg \overline{F_{b+1}^\prime} \neq \deg \overline{G_{b+1} ^\prime}$) we see $\beta _{11}+\mu \beta _{21}=0$ and $\beta _{12}+\mu\beta _{22}=0$; however, this implies $\det \beta =\det \begin{pmatrix} \beta _{11} & \beta _{12} \\ \beta _{21} & \beta _{22}\end{pmatrix}=0$, contradicting $\beta \in \GL _2 ^\tau (R^{[m]})$.  

So we now have that $\Phi _2 ^\prime, \ldots, \Phi _q ^\prime$ are all linear.  We will continue to write $\beta = \Phi _2 ^\prime \circ \cdots \circ \Phi _q ^\prime \in \overline{\GL _2 ^\tau} (R^{[m]})$.  Now write $\omega (x^{t_{i,1}}z_1)=F$ and $\omega (x^{t_{i,2}}z_2)=G$.  We have $F,G \in R^{[m+2]}$, but by assumption 1, either $\overline{F}=0$ or $\overline{G}=0$.  Then we compute
$$\omega _1 (x^{t_1}z_1) = \left( \Phi _1 \circ \beta \circ  \omega \right)(x^{t_1}z_1) = \omega(\beta _{11}x^{t_1}z_1+\beta _{12} x^{t_2}z_2+P_1(\beta_{21}x^{t_1}z_2+\beta_{22}x^{t_2}z_2)) = \beta _{11}F + \beta _{12} G+P_1(\beta_{21}F+\beta_{22}G)$$
But since $\overline{F}=0$ or $\overline{G}=0$, we clearly must have $P_1$ is linear.  We may then write $\Phi _1 ^\prime \circ \beta = (az_1+bx^{t_2-t_1}z_2,cx^{t_1-t_2}z_1+dz_2)$ (for some $a,b,c,d \in A^{[m]}$), and compute $\omega _1 (x^{t_1}z_1)=aF+bG$.  Since assumption 3 implies $a\overline{F}+b\overline{G}=0$, and either $\overline{F}=0$ or $\overline{G}=0$ (but not both), we must have $a=0$ or $b=0$.  
 Then from Lemma \ref{acnonzero}, we have
$$\Phi _1 ^\prime \circ \cdots \circ \Phi _q ^\prime = \beta = \rho \circ \Phi$$
for some $\rho \in \GL _2 ^\tau (R^{[m]})$ and elementary $\Phi \in \EA _2 ^\tau (R^{[m]}) \cap \GL _2 ^\tau (R^{[m]})$, as required.

We thus are reduced to proving Claim \ref{degreeFGclaim}.
\end{proof}
\begin{proof}[Proof of Claim \ref{degreeFGclaim}]
The proof is induction on $i$.  First, suppose $i=q$.  We set $\Phi _q ^\prime = \Phi _q$, and without loss of generality, assume $\Phi _q ^\prime$ is elementary in $z_1$ (the case where it is elementary in $z_2$ follows similarly).  Write $\omega = (x^{-t_1}F, x^{-t_2}G)$ for some $F,G \in R^{[m+2]}$.  Note that our assumption that $\Phi _q$ is elementary in $z_1$ (along with assumption 2) forces $\overline{F}=0$ and $\overline{G} \neq 0$.   Then
$$\omega _q ^\prime = \Phi _q ^\prime \circ \omega= (x^{-t_1}(F+P_q(G)),x^{-t_2}G)$$
Since $F \in (x)$, we thus have $\overline{F_q ^\prime} = P_q(\overline{G})$ and $\overline{G_q ^\prime}=\overline{G}$.  Then if $\Phi _q ^\prime$ is non-linear, $\deg P_q > 1$, and we have $\deg \overline{F_q ^\prime }=(\deg P_q)(\deg \overline{G}) > \deg \overline{G_q ^\prime}$ as required.

Now suppose $i<q$ with $\Phi _i$ non-linear.  By the induction hypothesis, we may replace $\Phi _j$ with $\Phi _j ^\prime$  for $j>i$ with the desired properties (using Corollary \ref{rhopush} to push any resulting permutation to the left).  Let $j>i$ be minimal such that $\Phi _j ^\prime$ is also non-linear.  Let $\beta = \Phi _{i+1} ^\prime \circ \cdots \circ \Phi _{j-1} ^\prime = (\beta _{11}z_1+\beta_{12}x^{t_1-t_2}z_2,\beta_{22}z_2+\beta_{21}x^{t_1-t_2}z_2) \in\GL _2 ^\tau(A^{[m]})$ for some $\beta _{i,j} \in A^{[m]}$.  Without loss of generality, assume $\Phi _i$ is elementary in $z_1$.  Then by Lemma \ref{acnonzero}, we may assume $\beta _{21} \neq 0$ and $\beta _{22} \neq 0$ by factoring through a permutation.  We then compute 
\begin{align*}
\omega _i &= \Phi _i \circ \beta \circ \omega _{j} ^\prime \\
&= \Phi _i \circ \left(x^{-t_1}(\beta _{11}F_j ^\prime + \beta _{12}G_j ^\prime), x^{-t_2}(\beta _{21}F_j ^\prime + \beta _{22}G_j ^\prime) \right) \\
&= \left(x^{-t_1}(\beta _{11}F_j ^\prime + \beta _{12}G_j ^\prime + P_i(\beta _{21}F_j+\beta _{22}G_j)), x^{-t_2}(\beta _{21}F_j ^\prime + \beta _{22}G_j ^\prime) \right) 
\end{align*}
Then clearly we have
\begin{align*}
F_i &= \beta _{11}F_j ^\prime + \beta _{12}G_j ^\prime + P_i(\beta _{21}F_j+\beta _{22}G_j) \\
G_i &= \beta _{21}F_j ^\prime + \beta _{22}G_j ^\prime
\end{align*}
Let $d=\max\{\deg \overline{F_j ^\prime}, \deg \overline{G_j ^\prime} \}$.  Then since $\deg \overline{G_j ^\prime} \neq \deg \overline{F_j ^\prime}$ by the inductive hypothesis, and since $\beta _{21} \neq 0$ and $\beta _{22} \neq 0$, $\deg \overline{G_i} = d$ and $\deg \overline{F_i} = (\deg P_i) d > d$ since $P_i$ is non-linear.  Thus we may now take $\Phi _i ^\prime = \Phi _i$ to complete the proof.
\end{proof}

\subsection{Main Theorems}\label{maintheorems}
We can now state and prove our main theorems.
\renametheorem{Main Theorem 1}
\begin{namedtheorem}
Let $\tau _0 \geq \cdots \geq \tau _q \in \IN ^n$.  For $0 \leq i \leq q$, let $\Phi _i \in \GA _n ^{\tau _i }(R^{[m]})$ and $\alpha _i \in \IA_{m+n} ^{\tau _i}(R)$.  Set $$\psi=\alpha _0 \circ \Phi _0 \circ \cdots \circ \alpha _q \circ \Phi _q$$
Then $(\psi (y_1), \ldots, \psi (y_m) )$ is a partial system of coordinates over $R$.  Moreover, if $A$ is a regular domain, and $\alpha _i, \Phi _i \in \TA_{m+n}(S)$ for $0 \leq i \leq q$, then $(\psi  (y_1), \ldots, \psi  (y_m))$ can be extended to a stably tame automorphism of $R^{[m+n]}$.
\end{namedtheorem}

We are now ready to prove the following result, which immediately implies Main Theorem 1.  The inclusion of the permutation maps $\rho _i$ is not necessary to achieve Main Theorem 1, but will help us in our proof of Main Theorem 2.  Note that if we assume each $\rho _i$ is of the form in Definition \ref{permdef}, then we may drop the assumption ``$A$ is a connected, reduced ring''.  In particular, we do not need to assume $A$ is connected and reduced in Main Theorem 1, since we set $\rho _i = \id$ for each $i$ to obtain it from Theorem \ref{mt1strong}. 

\begin{theorem}\label{mt1strong}
Let $A$ be a connected, reduced ring, and let $\tau _0, \ldots, \tau _q \in \IN ^n$.  Let $\rho _i \in \GP _n (S^{[m]})$,  $\alpha _i \in \IA _{m+n} ^{\tau _i}(R)$, and $\Phi _i \in \GA _n ^{\rho _i (\tau _i)}(R^{[m]})$ for each $0 \leq i \leq q$.  Set 
$$\psi _i = \alpha _0 \circ \rho _0  \circ \Phi _0 \circ  \cdots \circ \alpha _i \circ \rho _i  \circ \Phi _i$$ 
Suppose $\rho _i (\tau _i) \geq \tau _{i+1}$ for each $0 \leq i \leq q$.  Then for each $0 \leq i \leq q$, there exists $\theta _i \in \IA_{m+n} ^{\tau _{i+1}}(R) \cap \IA_{m+n}(R)$ with $\theta _i (y_j)=\psi _i(y_j)$ for each $1 \leq j \leq m$.  Moreover, if $\alpha _k, \Phi _k \in \TA_{m+n}(S)$ for $0 \leq k \leq i$, then $\theta _i$ is stably tame.
\end{theorem}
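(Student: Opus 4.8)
The key idea is that $\psi_i$ can be brought to a normal form $\psi_i=\Psi_i\circ\theta_i$ in which $\theta_i$ is an ``$\IA$-part'' and $\Psi_i$ is a product of maps touching only $z_1,\dots,z_n$; since such a $\Psi_i$ fixes each coordinate $y_j$ as a polynomial, the substitution rule for composing tuples gives $\psi_i(y_j)=(\Psi_i\circ\theta_i)(y_j)=\theta_i(y_j)$ automatically. The plan is to establish this normal form by induction on $i$, carrying the stronger bookkeeping that $\theta_i\in\bigcap_{\mathbf 0\le\sigma\le\rho_i(\tau_i)}\IA_{m+n}^\sigma(R)$. Granting the induction, since $\mathbf 0\le\tau_{i+1}\le\rho_i(\tau_i)$ by hypothesis, the cases $\sigma=\tau_{i+1}$ and $\sigma=\mathbf 0$ yield $\theta_i\in\IA_{m+n}^{\tau_{i+1}}(R)\cap\IA_{m+n}(R)$, as required. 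It genuinely matters that Theorem~\ref{crucial} delivers membership in the entire intersection, not just in $\IA_{m+n}^{\rho_i(\tau_i)}(R)$, because the $\tau_i$ are not assumed to be nested and these subgroups are incomparable in general.

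For the inductive step I would write $\psi_i=\psi_{i-1}\circ\alpha_i\circ\rho_i\circ\Phi_i$ (taking $\psi_{-1}=\Psi_{-1}=\theta_{-1}=\id$) and substitute $\psi_{i-1}=\Psi_{i-1}\circ\theta_{i-1}$; since $\tau_i\le\rho_{i-1}(\tau_{i-1})$, in particular $\theta_{i-1}\in\IA_{m+n}^{\tau_i}(R)$. Then push the $\IA$-part rightward: $\IA_{m+n}^{\tau_i}(R)$ is a group (a conjugate of $\IA_{m+n}(R)$), so $\alpha_i':=\theta_{i-1}\circ\alpha_i\in\IA_{m+n}^{\tau_i}(R)$; by Corollary~\ref{IApermutation}, $\alpha_i^{(1)}:=\rho_i^{-1}\circ\alpha_i'\circ\rho_i\in\IA_{m+n}^{\rho_i(\tau_i)}(R)$, whence $\alpha_i'\circ\rho_i=\rho_i\circ\alpha_i^{(1)}$; and applying Theorem~\ref{crucial} with $\tau=\rho_i(\tau_i)$, $\alpha=\alpha_i^{(1)}$, $\phi=\Phi_i\in\GA_n^{\rho_i(\tau_i)}(R^{[m]})$, there is $\tilde\Phi_i\in\langle\Phi_i,\EA_n^{\rho_i(\tau_i)}(R^{[m]})\rangle$ with $\theta_i:=\tilde\Phi_i\circ\alpha_i^{(1)}\circ\Phi_i\in\bigcap_{\mathbf 0\le\sigma\le\rho_i(\tau_i)}\IA_{m+n}^\sigma(R)$, so $\alpha_i^{(1)}\circ\Phi_i=\tilde\Phi_i^{-1}\circ\theta_i$. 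Assembling, $\psi_i=\Psi_{i-1}\circ\rho_i\circ\tilde\Phi_i^{-1}\circ\theta_i=:\Psi_i\circ\theta_i$, and $\Psi_i=\Psi_{i-1}\circ\rho_i\circ\tilde\Phi_i^{-1}$ again touches only $z_1,\dots,z_n$ because $\rho_i\in\GP_n(S^{[m]})$ and $\tilde\Phi_i\in\GA_n^{\rho_i(\tau_i)}(R^{[m]})$ do. This closes the induction and proves the first assertion, $\theta_i(y_j)=\psi_i(y_j)$.

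For the stable-tameness clause I would carry an extra invariant through the same induction: if $\alpha_k,\Phi_k\in\TA_{m+n}(S)$ for $0\le k\le i$, then $\theta_i\in\TA_{m+n}(S)$. Indeed $\GP_n(S^{[m]})\subseteq\GL_{m+n}(S)\subseteq\TA_{m+n}(S)$ — this is where connectedness and reducedness of $A$ enter, making the coefficients of $\rho_i$ genuine units of $S$ — so $\alpha_i^{(1)}=\rho_i^{-1}\circ\theta_{i-1}\circ\alpha_i\circ\rho_i\in\TA_{m+n}(S)$ using $\theta_{i-1}\in\TA_{m+n}(S)$ from the previous stage, and then the tameness clause of Theorem~\ref{crucial} gives $\theta_i=\tilde\Phi_i\circ\alpha_i^{(1)}\circ\Phi_i\in\TA_{m+n}(S)$. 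Thus $\theta_i\in\IA_{m+n}(R)\cap\TA_{m+n}(S)$, and it remains to deduce that such a $\theta_i$ is stably tame over $R$ when $A$ is a regular domain. I expect this last step to be the only real obstacle: everything above is a formal consequence of Corollary~\ref{IApermutation} and Theorem~\ref{crucial} once the substitution convention for composing tuples is pinned down, whereas upgrading ``identity modulo $x$, tame over the localization $S=R_x$'' to ``stably tame over $R$'' is where regularity of $A$ is genuinely used and requires invoking the standard stable-tameness machinery rather than anything developed so far in the paper.
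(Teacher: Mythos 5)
Your proof is correct and follows essentially the same route as the paper: induction on $i$, conjugating the accumulated $\IA$-part through $\rho_i$ via Corollary \ref{IApermutation}, absorbing it with Theorem \ref{crucial} to get $\theta_i$ in the full intersection $\bigcap_{{\mathbf 0}\le\sigma\le\rho_i(\tau_i)}\IA_{m+n}^{\sigma}(R)$, and observing that the leftover factors fix the $y_j$ so that $\theta_i(y_j)=\psi_i(y_j)$ (your normal form $\psi_i=\Psi_i\circ\theta_i$ is just a repackaging of the paper's direct computation). The one step you defer --- upgrading $\theta_i\in\IA_{m+n}(R)\cap\TA_{m+n}(S)$ to stable tameness over $R$ for $A$ a regular domain --- is exactly Theorem 4.5 of Berson--van den Essen--Wright, which the paper quotes at that point, so your reduction is complete.
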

\begin{proof}
The proof is by induction on $i$.  Note that we may use a trivial base case of $i=-1$ and $\theta _{-1}=\id$.
So we suppose $i \geq 0$.
By the induction hypothesis we have $\theta _{i-1} \in  \IA _{m+n} ^{\tau _i}(R)$.  Thus, $(\theta _{i-1} \circ \alpha _i) \in \IA_{m+n} ^{\tau _i}(R)$, and by Corollary \ref{IApermutation}, $\rho _i ^{-1} \circ (\theta _{i-1} \circ \alpha _i) \circ \rho _i \in \IA _{m+n} ^{\rho _i (\tau _i)}(R)$.  Since $\Phi _i \in \GA _n ^{\rho _i (\tau _{i})}(R)$ and $\tau _{i+1} \leq \rho(\tau _i)$, we can apply Theorem \ref{crucial} to obtain $\tilde{\Phi} \in \GA _n ^{\rho _i (\tau _i)} (R^{[m]})$ such that 
$$\theta _i := \tilde{\Phi}  \circ (\rho _i ^{-1} \circ \theta _{i-1} \circ \alpha _i \circ \rho _i) \circ \Phi _i \in \IA_{m+n} ^{\tau _{i+1}}(R) \cap \IA _{m+n}(R)$$  
Noting that $\tilde{\Phi}, \rho _i \in \GA _n (S^{[m]})$ and thus fix each $y_j$, and by the inductive hypothesis $\theta _{i-1}(y_j)=\psi_{i-1}(y_j)$ we have 
\begin{align*}
\theta _i (y_j) &= (\tilde{\Phi} \circ \rho _i ^{-1} \circ \theta _{i-1} \circ \alpha _i \circ \rho _i \circ \Phi _i)(y_j) \\
&=(\theta _{i-1} \circ \alpha _i \circ \rho _i \circ \Phi _i)(y_j) \\
&=(\psi _{i-1} \circ \alpha _{i} \circ \rho _i  \circ \Phi _i)(y_j) \\
&= \psi_i(y_j)
\end{align*} for each $1\leq j \leq m$.  Moreover, if $\alpha _0, \Phi _0, \ldots, \alpha _i, \Phi _i \in \TA _{m+n}(S)$, then the inductive hypothesis along with Theorem \ref{crucial} guarantee $\theta _i \in \TA_{m+n}(S)$ as well.  Noting that since $\theta _i \in \IA_{m+n}(R)$ we have $\theta _i \equiv \id \pmod{x}$, the stable tameness assertion follows immediately from the following result of Berson, van den Essen, and Wright:
\begin{theorem}[\cite{BEW}, Theorem 4.5]Let $A$ be a regular domain, and let $\phi \in \GA_n(R)$ with $J\phi=1$.  If $\phi \in \TA_n(S)$ and $\bar{\phi} \in \EA_n(R/xR)$, then $\phi$ is stably tame.
\end{theorem}
\end{proof}


\renametheorem{Main Theorem 2}
\begin{namedtheorem}
Suppose $A$ is an integral domain of characteristic zero.  Let $\Phi _0, \ldots, \Phi _q \in \EA _2 (S^{[m]})$ be elementaries.  Let $\alpha _i \in \GA_{m+2}(S)$ and $\rho _i \in \GP _2(S^{[m]})$ for each $0 \leq i \leq q$.  Set
$$\omega _i = \alpha _i \circ \rho _i \circ \Phi _i \circ \cdots \circ \alpha _q \circ \rho _q \circ \Phi _q$$
and define $\tau _i \in \IN ^2$ to be minimal such that $\omega _{i}(A_{\tau _i}) \subset R^{[m+2]}$ for $0 \leq i \leq q$.
If $\alpha _i \in \IA_{m+2} ^{\tau _i}(R)$ for each $0 \leq i \leq q$, then there exists $\theta \in \IA_{m+2}(R)$ such that $\theta(y_j)=\omega _0 (y_j)$.
\end{namedtheorem}

The theorem follows from following claim, which allows us to apply Theorem \ref{mt1strong} to $\omega _0$.  By convention, we will let $\tau _{q+1}=0$.

\begin{claim}\label{inductionclaim}
For each $a \leq q$, there exist the following:
\begin{enumerate}
\item A sequence $\tilde{\tau}_a, \ldots , \tilde{\tau} _q \in \IN ^2$
\item $\tilde{\rho }_a, \ldots, \tilde{\rho} _q \in \GP _2 (S^{[m]})$
\item For each $a \leq i \leq q$, $\tilde{\alpha} _i \in \IA _{m+2} ^{\tilde{\tau} _i}(R)$ and $\Phi _i \in \EA _2 ^{\tilde{\rho} _i (\tilde{\tau} _i)} (R^{[m]})$
\end{enumerate}
such that
\begin{enumerate}
\item $\tilde{\rho}_i (\tilde{\tau} _i) \geq \tilde{\tau} _{i+1}$
\item Setting $\tilde{\omega _i} = \tilde{\alpha} _i \circ \tilde{\rho} _i \circ \tilde{\Phi} _i \circ \cdots \circ \tilde{\alpha} _q \circ \tilde{\rho} _q \circ \tilde{\Phi} _q $, each $\tilde{\tau}_i$ is minimal such that $\tilde{\omega} _{i}( A_{\tilde{\tau} _i} ) \subset R^{[m+2]}$, for $a \leq i \leq q$
\item $\omega _a = \tilde{\omega} _a$
\end{enumerate}
\end{claim}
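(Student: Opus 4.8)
\emph{Reduction to the claim.} Granting Claim~\ref{inductionclaim} with $a=0$, Main Theorem~2 follows at once: one gets $\omega_0=\tilde\omega_0=\tilde\alpha_0\circ\tilde\rho_0\circ\Phi_0\circ\cdots\circ\tilde\alpha_q\circ\tilde\rho_q\circ\Phi_q$ with $\tilde\alpha_i\in\IA_{m+2}^{\tilde\tau_i}(R)$, $\Phi_i\in\EA_2^{\tilde\rho_i(\tilde\tau_i)}(R^{[m]})\subset\GA_2^{\tilde\rho_i(\tilde\tau_i)}(R^{[m]})$, and $\tilde\rho_i(\tilde\tau_i)\ge\tilde\tau_{i+1}$; since a characteristic-zero domain is connected and reduced, Theorem~\ref{mt1strong} produces $\theta\in\IA_{m+2}(R)$ with $\theta(y_j)=\omega_0(y_j)$ for all $j$. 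So all the work is in the claim, which I would prove by downward induction on $a$, the base case $a=q$ being the same argument run with the empty tail $\omega_{q+1}=\id$.

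\emph{Setting up the inductive step.} Assume the claim for $a+1$, so in particular $\tilde\omega_{a+1}=\omega_{a+1}$ and the data for indices $>a$ satisfy the three bulleted conclusions. The first thing to try is the ``unchanged'' choice $\tilde\tau_a=\tau_a$, $\tilde\rho_a=\rho_a$, $\tilde\alpha_a=\alpha_a$: then $\tilde\omega_a=\omega_a$ for free, the minimality of $\tilde\tau_a$ is the definition of $\tau_a$, and $\tilde\alpha_a\in\IA_{m+2}^{\tau_a}(R)$ is a hypothesis. What this leaves to check is exactly (i) $\Phi_a\in\EA_2^{\rho_a(\tau_a)}(R^{[m]})$ and (ii) $\rho_a(\tau_a)\ge\tilde\tau_{a+1}=\tau_{a+1}$. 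Lemma~\ref{technicallemma} is the workhorse: its equivalences let me replace $\omega_a$ by $\Phi_a'\circ\rho_a\circ\omega_{a+1}$ (with $\Phi_a'=\rho_a\circ\Phi_a\circ\rho_a^{-1}$, elementary in one variable, say $z_j$) without disturbing the minimality defining $\tau_a$, and its ``moreover'' clauses say that the coordinate of $\rho_a\circ\omega_{a+1}$ complementary to $z_j$ is a nonzerodivisor mod $x$, and that $\rho_a(\tau_a)-\tau_{a+1}$ is an integer multiple of a single standard basis vector.

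\emph{The dichotomy.} Split on whether the $z_j$-coordinate of $\rho_a\circ\omega_{a+1}$ is \emph{also} a nonzerodivisor mod $x$. If it is, then both coordinate images of $\rho_a\circ\omega_{a+1}$ lie in $R^{[m+2]}\setminus xR^{[m+2]}$, so no pole can be created and then cancelled, and Lemma~\ref{PhiEAtau} (the essentially two-dimensional ingredient) forces $\Phi_a'\in\EA_2^{\rho_a(\tau_a)}(R^{[m]})$; Corollary~\ref{earhotau} transfers this to (i), and, since a twisted elementary preserves the corresponding $A_\tau$, the minimality defining $\tau_a$ rules out a negative offset, giving (ii) --- so the unchanged choice works. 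If instead that coordinate vanishes mod $x$ --- a pole of the tail $\omega_{a+1}$ being killed by $\Phi_a$ --- neither (i) nor (ii) can be read off directly, and the data must be re-chosen. Here I would isolate the maximal block $\Phi_a\circ\cdots\circ\Phi_b$ on the right that opens and closes with this vanishing phenomenon but has none strictly inside (so $\Phi_{a+1},\dots,\Phi_b$ all fall under the previous ``good'' case), slide the intervening $\alpha_i$'s and $\rho_i$'s out of the way with Corollaries~\ref{alphapush}, \ref{rhopush}, \ref{IApermutation} and Lemma~\ref{NoAlphaNoRho}, and invoke Lemma~\ref{onlyOnePhi} (again genuinely $n=2$) to rewrite the block as $\alpha\circ\rho\circ\Phi$ with $\Phi\in\EA_2^\tau(R^{[m]})\cap\GL_2^\tau(R^{[m]})$ a single \emph{linear} elementary, which Lemma~\ref{acnonzero} (or a further application of Corollary~\ref{alpharhoalpharho} to the adjacent $\alpha\circ\rho$) absorbs into a generalized permutation. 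Throughout, the equivalences of Lemma~\ref{technicallemma} ensure the rewritten $\tilde\omega_i$'s still record the correct minimal exponents, so conclusions~1 and~2 of the claim persist.

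\emph{Main obstacle.} The delicate point is exactly the pole-cancellation case: when $\Phi_a$ cancels a pole produced by the entire tail $\omega_{a+1}$, the $\tau$-sequence can fail to be monotone at that index and $\Phi_a$ need not lie in the expected twisted elementary subgroup, so no single-step argument works and one is forced to analyze a whole block of consecutive elementaries at once. This is precisely where the restriction $n=2$ enters: the example following Lemma~\ref{PhiEAtau} shows that for $n\ge 3$ a pole really can be created and then cancelled inside such a block, and Lemmas~\ref{PhiEAtau} and~\ref{onlyOnePhi}, on which the block analysis rests, both fail in that generality.
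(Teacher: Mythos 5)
Your architecture matches the paper's: keep the given data when possible, and when it fails, collapse a block of consecutive elementaries into a single $\alpha\circ\rho\circ\Phi$ via Lemma~\ref{onlyOnePhi} together with the commutation results (Corollaries~\ref{alphapush}, \ref{IApermutation}, \ref{rhopush}, \ref{alpharhoalpharho}, Lemma~\ref{NoAlphaNoRho}) and Lemma~\ref{acnonzero}, then recurse on the shorter word; you also correctly locate where $n=2$ enters. The gap is in your dichotomy. Writing $\tau_a=(t_{a,1},t_{a,2})$ and letting $z_j$ be the variable moved by $\Phi_a'=\rho_a\circ\Phi_a\circ\rho_a^{-1}$, there are \emph{three} possibilities for $(\rho_a\circ\omega_{a+1})(x^{t_{a,j}}z_j)$: it lies in $R^{[m+2]}\setminus xR^{[m+2]}$, it lies in $xR^{[m+2]}$, or it fails to lie in $R^{[m+2]}$ at all (a genuine pole that $\Phi_a$ cancels, which is exactly the case $\rho_a(\tau_a)<\tau_{a+1}$). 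You collapse these to two cases, and the middle one --- $\Phi_a$ \emph{creates} a pole, i.e.\ $\rho_a(\tau_a)>\tau_{a+1}$ strictly in the $j$-th slot --- is handled by neither branch: Lemma~\ref{PhiEAtau} cannot be invoked because its hypothesis that \emph{both} coordinates lie in $R^{[m+2]}\setminus xR^{[m+2]}$ fails, and Lemma~\ref{onlyOnePhi} cannot be invoked because its assumption~3 concerns the image \emph{after} applying the block, which by minimality of $\tau_a$ lies in $R^{[m+2]}\setminus xR^{[m+2]}$ there. This case genuinely occurs: e.g.\ $\Phi_0=(z_1+x^{-1}z_2^2,z_2)$ applied to $\omega_1=\id$ gives $\tau_0=(1,0)>\tau_1=(0,0)$ and $(\rho_0\circ\omega_1)(xz_1)=xz_1\in xR^{[m+2]}$, and it is also what happens at the bottom index $b$ of every block you do collapse.

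The repair is what the paper does. The correct dichotomy is $\rho_a(\tau_a)\geq\tau_{a+1}$ versus $\rho_a(\tau_a)<\tau_{a+1}$, and in the first branch one does not quote Lemma~\ref{PhiEAtau} but reruns its proof with weaker input: $(\rho_a\circ\omega_{a+1})(x^{t_{a,j}}z_j)\in R^{[m+2]}$ (possibly divisible by $x$) because $A_{\rho_a(\tau_a)}\subset A_{\tau_{a+1}}$, while the complementary coordinate is in $R^{[m+2]}\setminus xR^{[m+2]}$ by Lemma~\ref{technicallemma}; subtracting, the term $x^{t_{a,j}-s}P(\cdots)$ must lie in $R^{[m+2]}$, which already forces $s\leq t_{a,j}$ and hence $\Phi_a\in\EA_2^{\rho_a(\tau_a)}(R^{[m]})$. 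Lemma~\ref{PhiEAtau} is reserved for the other branch, applied with $\sigma=\rho_a(\tau_a)<\tau=\tau_{a+1}$ to show $\Phi_a\in\EA_2^{\tau_{a+1}}(R^{[m]})$, which is what Lemma~\ref{NoAlphaNoRho} requires before Lemma~\ref{onlyOnePhi} can be brought to bear. With that correction your induction goes through; the remaining imprecision (the exact delimitation of the block from the maximal cancellation index down to the first creation index, the reindexing after collapsing, and the fact that the recursion is really on the length of the word of elementaries) is cosmetic.
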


\begin{proof}[Proof of Claim \ref{inductionclaim}]
First, suppose $\rho(\tau _i) \geq \tau _{i+1}$ for $a \leq i \leq q$.  Then all we need to show is that $\Phi _i \in \EA _2 ^{\rho _i (\tau _i)} (R^{[m]})$.  Note that by Corollary \ref{earhotau} it is equivalent to show that $\Phi _i ^\prime := \rho _i \circ \Phi _i \circ \rho _i ^{-1} \in \EA _2 ^{\tau _i}(R)$.

Without loss of generality, write $\Phi _i ^\prime = (z_1+x^{-s}P(x^{t_{i,2}}z_2),z_2)$ for some $P(x^{t_{i,2}}z_2) \in A_{\tau _i}[\hat{z}_1] \setminus xA_{\tau _i}[\hat{z}_1]$.  
Then
\begin{align*}
(\rho _i \circ \Phi _i \circ \omega _{i+1})(x^{t_{i,1}}z_1)&=
(\Phi _i ^\prime \circ \rho _i \circ \omega _{i+1})(x^{t_{i,1}}z_1) \\
 &= (\rho _i \circ \omega _{i+1})(x^{t_{i,1}}z_1)+x^{t_{i,1}-s}P((\rho _i \circ \omega_{i+1})(x^{t_{i,2}}z_2))
\end{align*}
Since $\rho _i (\tau _i) \geq \tau _{i+1}$, we have $\rho _i (A_{\tau _i})=A_{\rho _i(\tau _i)} \subset A_{\tau _{i+1}}$.  In particular, $(\rho _i \circ \omega _{i+1})(A_{\tau _i}) = \omega _{i+1}(A_{\rho _i (\tau _i}))  \subset \omega _{i+1}(A_{\tau _{i+1}}) \subset R^{[m+2]}$.   As $(\rho _i \circ \Phi _i \circ \omega _{i+1})(A_{\tau _i}) \subset R^{[m+2]}$, this implies that $x^{t_{i,1}-s}P((\rho _i \circ \omega _{i+1})(x^{t_{i,2}}z_2)) \in R^{[m+2]}$ as well.  Thus $t_{i,1} \geq s$ since $P \notin (x)$ and $(\rho _i \circ \omega _{i+1})(x^{t_{i,2}}z_2) \in R^{[m+2]} \setminus xR^{[m+2]}$ (by Lemma \ref{technicallemma}); but $t_{i,1} \geq s$ is precisely the condition that $\Phi _i \in \EA _2 ^{\tau _i}(R^{[m]})$ as required.    

It now suffices to assume  that $a\leq q$ is maximal such that $\rho _a (\tau _a) < \tau _{a+1}$; then $\rho _i (\tau _i) \geq \tau _{i+1}$ and $\Phi _i \in \EA _2 ^{\rho _i (\tau _{i})}(R^{[m]})$ (by the above argument) for $a+1 \leq i \leq q$.

We proceed by induction downwards on $q-a$. 
Let $b>a$ be minimal such that $\rho_b(\tau _{b}) > \tau _{b+1}$.   That is, $\rho _i (\tau _i) = \tau _{i+1}$ for $a < i < b$.  We will show that we can replace $\alpha _a \circ \rho _a \circ \Phi _a \circ \cdots \circ \alpha _b \circ \rho _b \circ \Phi _b$ by a single $\alpha _a ^\prime \circ \rho _a ^\prime \circ \Phi _a ^\prime$; then the induction hypothesis will imply that $\omega _a$ is in the desired form.  Note that by Lemma \ref{PhiEAtau} (with $\sigma = \rho _a(\tau _a)$, $\tau =\tau _{a+1}$, and $\omega=\omega _{a+1}$), we must have $\Phi _a \in \EA _2 ^{\tau _{a+1}}(R^{[m]})$.  Also, since $\rho _i (\tau _i)=\tau _{i+1}$ for $a < i < b$, $\Phi _i \in \EA _2 ^{\tau _{i+1}}(R^{[m]})$ for $a<i<b$.  Then by Lemma \ref{NoAlphaNoRho}, it suffices to assume that $\alpha _{a+1} = \cdots = \alpha _b = \id$, $\rho _{a+1}= \cdots = \rho _b = \id$, $\rho _a(\tau _a) < \tau _{a+1}=\cdots= \tau _b > \tau _{b+1}$ and $\Phi _i \in \EA _n ^{\tau _b}(R^{[m]})$ for $ a \leq i \leq b$.

A priori, it seems we may no longer be able to assume the minimality condition on the $\tau _i$ when $a < i < b$.  However, we may simply replace the $\tau _i$ by the minimal $\tau _i$ such that $\omega _i (A_{\tau _i}) \subset R^{[m+2]}$ (for $a \leq i \leq b$).  Then we may need to increase $a$ (but it will not exceed $b$) such that we may still assume $\alpha _{a+1}=\cdots=\alpha _b = \id$, $\rho _{a+1}=\cdots=\rho _b = \id$, $\Phi _{a}, \ldots, \Phi _b \in \EA _n ^{\tau _b}(R^{[m]})$, and
$$\rho _a (\tau _a) < \tau _{a+1} = \cdots = \tau _b > \tau _{b+1}$$

We also now see that
\begin{equation}\label{omegaa}
\omega _a = \alpha _a \circ \rho _a \circ \Phi _a \circ \Phi _{a+1} \circ \cdots \circ \Phi _b \circ \omega _{b+1}
\end{equation}

Set \begin{equation}\tau _b = (t_1,t_2)\end{equation} for some $t_1, t_2 \in \IN$.  Without loss of generality, assume $\Phi _a$ is elementary in $z_1$.  Then since $\rho _a(\tau _a) < \tau _{a+1}=\tau _b = (t_1,t_2)$, the minimality of $\tau _a$ implies $(\Phi _a \circ \cdots \circ \Phi _b \circ \omega _{b+1})(x^{t_1}z_1) \in xR^{[m+2]}$.  Then, by Lemma \ref{onlyOnePhi}, we may assume that $\Phi _a \circ \cdots \circ \Phi _b = \alpha \circ \rho \circ \Phi$ for some $\alpha \in \IA _2 ^{\tau _{a+1}}(R^{[m]})$, $\rho \in \GP_2 ^{\tau _{a+1}} (R^{[m]})$ and elementary $\Phi \in \EA_2  ^{\tau _{a+1}} (R^{[m]}) \cap \GL _2 ^{\tau _{a+1}}(R^{[m]})$.  Then
$$\omega _a = \alpha _a \circ \rho _a \circ \alpha \circ \rho \circ \Phi \circ \omega _{b+1}$$
Noting that $\rho _a (\tau _a) < \tau _{a+1}$, by Corollary \ref{alpharhoalpharho}, we have $\alpha _a \circ \rho _a \circ \alpha \circ \rho = \alpha _a ^\prime \circ \rho _a ^\prime \circ \Phi ^\prime$ for some $\alpha _a ^\prime \in \IA _{m+2} ^{\tau _a} (R)$, $\rho _a ^\prime = \rho _a \circ \rho \in \GP_2(S^{[m]})$, and $\Phi ^\prime \in \EA _2 ^{\tau _{a+1}}(R^{[m]})$ (since $\rho(\tau _{a+1})=\tau_{a+1}$).  Thus we have 
\begin{equation*}
\omega _a = \alpha _a ^\prime \circ \rho _a ^\prime \circ \Phi ^\prime \circ \Phi \circ \omega _{b+1} 
\end{equation*}

First, suppose $\Phi ^\prime$ and $\Phi$ are both elementary in the same variable; then we may set $\tilde{\Phi} = \Phi ^\prime \circ \Phi$ and $\tilde{\Phi} \in \EA _2 ^{\tau _{a+1}}(R^{[m]})$ is elementary, and
\begin{equation}
\omega _a = \alpha _a ^\prime \circ \rho _a ^\prime \circ \tilde{\Phi} \circ \omega _{b+1} \label{omegaa1}
\end{equation}

Similarly, if we suppose instead that $\Phi ^\prime$ and $\Phi$ are elementary in different variables, then since $\Phi \in \GL _2 ^{\tau _{a+1}}(R^{[m]})$, by Lemma \ref{acnonzero} there exist $\tilde{\rho} \in \GP _2 ^{\tau _{a+1}}(R^{[m]})$ and $\tilde{\Phi} \in \EA_2 ^{\tau _{a+1}}(R^{[m]})$ such that $\Phi ^\prime \circ \Phi=\tilde{\rho} \circ \tilde{\Phi}$.  Then we  have

\begin{equation}
\omega _a = \alpha _a ^\prime \circ (\rho _a ^\prime \circ \tilde{\rho}) \circ \tilde{\Phi} \circ \omega _{b+1} \label{omegaa2}
\end{equation}

Note that since $\tilde{\rho} \in \GP _2 ^{\tau _{a+1}}(R^{[m]})$ that $\rho _a ^\prime (\tau _a) < \tau _{a+1}$ implies $(\rho _a ^\prime \circ \tilde{\rho})(\tau _a) < \tau _{a+1}$.  Thus, in either case, we see we have written $\omega _a$, which was originally \eqref{omegaa} a product of $q-a+1$ elementaries, in \eqref{omegaa1} or \eqref{omegaa2} as a product with $q-b$ elementaries, and since $a<b$, we must have $q-a+1>q-b$.  The induction hypothesis then completes the proof.
\end{proof}

\section*{\small Acknowledgements}The author would like to thank David Wright and Brady Rocks for helpful discussions and feedback.

\bibliographystyle{siam}
\bibliography{ref}

\begin{thebibliography}{10}

\bibitem{AM}
{\sc S.~S. Abhyankar and T.~T. Moh}, {\em Embeddings of the line in the plane},
  J. Reine Angew. Math., 276 (1975), pp.~148--166.

\bibitem{Asanuma}
{\sc T.~Asanuma}, {\em Polynomial fibre rings of algebras over {N}oetherian
  rings}, Invent. Math., 87 (1987), pp.~101--127.

\bibitem{BEW}
{\sc J.~Berson, A.~van~den Essen, and D.~Wright}, {\em Stable tameness of
  two-dimensional polynomial automorphisms over a regular ring}, Adv. Math.,
  230 (2012), pp.~2176--2197.

\bibitem{BhatRoy}
{\sc S.~M. Bhatwadekar and A.~Roy}, {\em Some results on embedding of a line in
  {$3$}-space}, J. Algebra, 142 (1991), pp.~101--109.

\bibitem{FreudDaigle}
{\sc D.~Daigle and G.~Freudenburg}, {\em Families of affine fibrations}, in
  Symmetry and spaces, vol.~278 of Progr. Math., Birkh\"auser Boston Inc.,
  Boston, MA, 2010, pp.~35--43.

\bibitem{Freud}
{\sc G.~Freudenburg}, {\em The {V}\'en\'ereau polynomials relative to {$\mathbb
  C^*$}-fibrations and stable coordinates}, in Affine Algebraic Geometry, Osaka
  Univ. Press, Osaka, 2007, pp.~203--215.

\bibitem{Hamann}
{\sc E.~Hamann}, {\em On the {$R$}-invariance of {$R[x]$}}, J. Algebra, 35
  (1975), pp.~1--16.

\bibitem{KVZ}
{\sc S.~Kaliman, S.~V{\'e}n{\'e}reau, and M.~Zaidenberg}, {\em Simple
  birational extensions of the polynomial algebra {${\Bbb C}^{[3]}$}}, Trans.
  Amer. Math. Soc., 356 (2004), pp.~509--555 (electronic).

\bibitem{KambayashiMiyanishi}
{\sc T.~Kambayashi and M.~Miyanishi}, {\em On flat fibrations by the affine
  line}, Illinois J. Math., 22 (1978), pp.~662--671.

\bibitem{KambayashiWright}
{\sc T.~Kambayashi and D.~Wright}, {\em Flat families of affine lines are
  affine-line bundles}, Illinois J. Math., 29 (1985), pp.~672--681.

\bibitem{Vtype}
{\sc D.~Lewis}, {\em V{\'e}n{\'e}reau-type polynomials as potential
  counterexamples}, J. Pure Appl. Algebra, 217 (2013), pp.~946--957.

\bibitem{Russell76}
{\sc P.~Russell}, {\em Simple birational extensions of two dimensional affine
  rational domains}, Compositio Math., 33 (1976), pp.~197--208.

\bibitem{Sathaye}
{\sc A.~Sathaye}, {\em Polynomial ring in two variables over a {DVR}: a
  criterion}, Invent. Math., 74 (1983), pp.~159--168.

\bibitem{SU}
{\sc I.~P. Shestakov and U.~U. Umirbaev}, {\em The tame and the wild
  automorphisms of polynomial rings in three variables}, J. Amer. Math. Soc.,
  17 (2004), pp.~197--227 (electronic).

\bibitem{Suzuki}
{\sc M.~Suzuki}, {\em Propri\'et\'es topologiques des polyn\^omes de deux
  variables complexes, et automorphismes alg\'ebriques de l'espace {${\bf
  C}^{2}$}}, J. Math. Soc. Japan, 26 (1974), pp.~241--257.

\bibitem{Arnobook}
{\sc A.~van~den Essen}, {\em Polynomial automorphisms and the {J}acobian
  conjecture}, vol.~190 of Progress in Mathematics, Birkh\"auser Verlag, Basel,
  2000.

\bibitem{VThesis}
{\sc S.~V{\'e}n{\'e}reau}, {\em Automorphismes et variables de l'anneau de
  polyn\^{o}mes {$A[y_1,\ldots,y_n]$}}, PhD thesis, Universit\'{e} Grenoble I,
  Institut Fourier, 2001.

\end{thebibliography}

\end{document}